\newcommand{\red}{\color[rgb]{0,0,0}}
\newtheorem{thm}{Theorem}[section]
\newtheorem{lemma}[thm]{Lemma}
\newtheorem{cor}[thm]{Corollary}
\newtheorem{remark}[thm]{Remark}
\newtheorem{defn}[thm]{Definition}
\numberwithin{equation}{section}
\def\dis{\displaystyle}
\def\R{\mathbb{R}}
\def\N{\mathbb{N}}
\def\e{{\epsilon}}
\def\D{\Delta}
\def\d{\delta}
\def\l{\left}
\def\r{\right}
\def\a{\alpha}
\def\b{\beta}
\def\G{\Gamma}
\def\g{\gamma}
\def\Th{\Theta}
\def\th{\theta}
\def\S{\Sigma}
\def\s{\sigma}
\def\k{\kappa}
\def\vp{\varphi}
\def\p{\partial}
\def\F{\mathcal{F}}
\def\P{{\mathbb{P}}}
\def\toP{\stackrel{p}{\to}}
\def\toD{\stackrel{d}{\to}}
\def\E{\mathbb{E}}
\def\mb#1{\mbox{\boldmath $#1$}} 
\def\I{\mathbf{1}}
\def\wh#1{\widehat{#1}} 
\def\ol#1{\overline{#1}} 
\def\wt#1{\widetilde{#1}}
\def\df{\mathrm{d}}
\newcommand{\bi}{\begin{itemize}}
\newcommand{\ei}{\end{itemize}}
\title{Approximation and estimation of scale functions for spectrally negative L\'evy processes}
\author{Haruka Irie\footnote{{\tt i.h.swim1729@toki.waseda.jp }} \\
{\it Graduate School of Fundamental Science and Engineering, Waseda University}\\
Yasutaka Shimizu\footnote{Corresponding author: {\tt shimizu@waseda.jp}} \\ 
{\it Department of Applied Mathematics, Waseda University} 
}
\date{October 23, 2024}
\begin{document}

\maketitle 

\begin{abstract} 

The scale function holds significant importance within the fluctuation theory of L\'evy processes, particularly in addressing exit problems. However, its definition is established through the Laplace transform, thereby lacking explicit representations in general. This paper introduces a novel series representation for this scale function, employing Laguerre polynomials to construct a uniformly convergent approximate sequence. Additionally, we derive statistical inference based on specific discrete observations, presenting estimators of scale functions that are asymptotically normal.

\begin{flushleft}
{\it Keywords:}  Spectrally negative L\'evy process; scale function; Laguerre function; discrete observation; asymptotically normal estimator. \vspace{1mm}\\
{\it MSC2020:}  60G51; 62M86; 62P05. 
\end{flushleft}
\end{abstract}

\section{Introduction}\label{sec:intro}

On a stochastic basis $(\Omega, \F, \P; \mathbb{F})$ where $\mathbb{F}=(\F_t)_{t\ge 0}$ is a (right-continuous) filtration, we consider a {\it spectrally negative L\'evy process} $X=(X_t)_{t\ge 0}$ starting at $x\in \R$:  
\[
X_t = x + ct + \s W_t - L_t,\quad t\ge 0, 
\]
where $x,c\in \R$ are constants, $W=(W_t)_{t\ge 0}$ is an $\mathbb{F}$-Wiener process and $L=(L_t)_{t\ge 0}$ is an $\mathbb{F}$-L\'evy {subordinator} (possibly of infinite activity) with the L\'evy measure $\nu$ on $(0,\infty)$ satisfying that  
\[
\int_0^1 z\,\nu(\df z) < \infty;\quad \nu([1,\infty)) < \infty.
\]
Note that $W_0=L_0=0\ a.s.$. The {\it Laplace exponent} of $X$ is defined as 
\[
\psi_X(\th) := \log \E\l[e^{\th (X_1 - x)}\r] =  c\th + \frac{\s^2}{2} \th^2 +\int_0^\infty (e^{-\th z} -1)\,\nu(\df z),\quad \th \ge 0. 
\]

We are interested in the {\it $q$-scale function} of $X$, $W^{(q)}:\R\to \R_+:=[0,\infty)$ for a number $q\ge 0$, defined as follows: $W^{(q)}(x) = 0$ on $(-\infty,0)$ and otherwise $W^{(q)}$ is the unique continuous function and right continuous at the origin with the Laplace transform 
\[
\int_0^\infty e^{-\th z}W^{(q)}(z)\,\df z = \frac{1}{\psi_X(\th) - q}\qquad \mbox{for \ \ $\th > \Phi(q)$}, 
\] 
where $\Phi(q)$ is called the {\it Lundberg exponent}: 
\[
\Phi(q) := \sup\{\th \ge 0\,|\, \psi_X(\th) = q\}. 
\]

The scale functions play essential roles in the fluctuation theory of L\'evy processes and have various applications in insurance and finance. 
For example, defining two stopping times $\tau_\a^+=\inf\{t>0\,|\, X_t>\a\}$ and $\tau_\a^- = \inf\{t>0\,|\, X_t < \a\}$ for each $\a\in \R$, 
we have a fluctuation identity for $a>0$ such that 
\[
\E\l[e^{-q \tau_a^+}\I_{\{\tau_a^+ < \tau_0^-\}}\r] = \frac{W^{(q)}(x)}{W^{(q)}(a)} \qquad \mbox{for $x \in [0,a]$}, 
\]
which is an essential identity in the theory of two-sided exit problems and {\red useful} in analyzing credit risks and barrier options,  among others. As $q=0$ we have that $\P(\tau_0^- <\tau_a^+) = 1 - W^{(0)}(x)/W^{(0)}(a)$, and so assuming that 
\begin{align*}
\psi'(0+) = c - \int_0^\infty z\,\nu(\df z) >0, 
\end{align*}
called the {\it net profit condition} in ruin theory, which implies that $W^{(0)}(\infty) = 1/\psi'(0+)$, we have the well-known identity for the ruin probability in the classical ruin theory: 
\begin{align}
\P(\tau_0^- < \infty) = 1 - \psi'(0+)W^{(0)}(x). \label{ruin prob}
\end{align}
See Kyprianou \cite{k14} for the details around these identities. 
Moreover, Biffis and Kyprianou \cite{bk10} and Feng and Shimizu \cite{fs13} showed that scale functions are useful tools to represent more general ruin-related risks. They showed some generalized Gerber-Shiu functions, the classical version introduced by Gerber and Shiu \cite{gs98}, are also represented by $q$-scale functions. 
The scale functions also appear in the optimal dividend problems; see Loeffen \cite{lo09}, and the Parisian-ruin probability; see Loeffen {\it et al.} \cite{letal13} and Baurdoux {\it et al.} \cite{betal16}, among others. The background of the above connection between ruin theory and $q$-scale functions includes the potential theory of spectrally negative L\'evy processes and the Wiener-Hopf factorization; see, e.g., Bertoin \cite{b96, b97} and Roger \cite{r90}. For more details, the reference list in Kyprianou \cite{k14}, Kyprianou and Rivero \cite{kr08},  Feng and Shimizu \cite{fs13}, and Kuznetsov {\it et al.} \cite{kk12} may be helpful for the historical background of these and their applications in various fields.

In considering such an application, 
{\red it will be understood that there is a practical need to identify the scale function and to estimate it statistically from observations of a given L\'evy process. Indeed, the identification and approximation of scale functions is a topic that has attracted much attention in recent years. 

We can find an explicit representation for some simple cases, like a compound Poisson processes; see. e.g., Hubalek and Kyprianou \cite{hk04}. However, obtaining a general representation via the Laplace transform is generally challenging because the Laplace inversion is too difficult to implement. Therefore, attempts are made to get an approximate representation. The earliest work on an approximation of scale functions would be due to Egami and Yamazaki \cite{ey14}, who construct an approximate sequence of $q$-scale functions using one of a compound Poisson-type L\'evy process with phase-type jumps, which consist of a dense family in a class of spectrally negative L\'evy processes. Landrault and Willmot \cite{lw20} proposed an asymptotic expansion for Wiener-Poisson risk models by inverting the Laplace transform of scale functions and investigating some examples where explicit expansions are obtained. Behme {\it et al.} \cite{b23} extends their results to more general L\'evy processes with infinite jumps. Moreover, Xie {\it et al.} \cite{xcz24} focused on a specific probabilistic representation of the $q$-scale function and provided an approximation formula using Laguerre series expansion. See also Mart\'in-Gonz\'alez {\it et al.} \cite{m24} for alternative expansion, and Surya \cite{su07} for numerical methods, among others. 

Thus, there are many discussions on the approximation of scale functions, but statistical inference has yet to be discussed, to the best of our knowledge. Our paper's novelty is that it gives a new series approximation of the $q$-scale function and a data-based statistical estimation of the scale function. 
Among these, we are particularly concerned with problems in insurance actuarial practice. In modern actuarial practice, it is standard to use the spectrally negative L\'evy process $X=(X_t)_{t\ge 0}$ for the surplus or asset processes of insurance companies, and certain discrete observations of $X$ are available as real data; see Section \ref{sec:sampling}. However, it is usually unclear from the data which L\'evy process these data follow. We therefore propose a method for estimating scale functions without specifying a specific model of $X$, by using a nonparametric method for estimating quantities associated with the L\'evy measure.}

We must take two steps to identify the $q$-scale function in practice.  First, we introduce a new approximation formula. We concentrate on a compound geometric integral representation of the $q$-scale function obtained by Feng and Shimizu \cite{fs13}. We derive a Laguerre series expansion of the corresponding compound geometric distribution function, the Stieltjes integral with respect to that gives the expansion of the $q$-scale function. 
Although we also use Laguerre expansion as in Xie {\it et al.} \cite{xcz24}, our approach differs from theirs, 
and the formula is fundamentally different from theirs, which makes the primary contribution of this paper. 

Second, we proceed to statistical inference.  The following two studies, Zhang and Su \cite{zs19} and Shimizu and Zhang \cite{sz19},  are instructive.
The former proposes an estimator of Gerber-Shiu functions by deriving its Laguerre series expansion and estimating the coefficients of each term. 
They show a consistency for their proposed estimator. 
Shimizu and Zhang \cite{sz19} applied the same approach to the ruin probability, {\red and} they further showed that the estimator {\red is} asymptotically normal. 
As shown in equation \eqref{ruin prob}, the ruin probability is represented by $W^{(0)}(x)$, so their estimator is also an asymptotically normal estimator for the 0-scale function. This paper will construct an asymptotically normal estimator of the $q$-scale function, a generalization of \cite{sz19}.

The paper is organized as follows. Section \ref{sec:q-scale} introduces the series representation of the $q$-scale function obtained by Feng and Shimizu \cite{fs13}. Under the net profit condition, the $q$-scale function has an integral representation with respect to the compound geometric distribution. 
{\red We then construct an approximation of the compound geometric distribution function using the Laguerre series expansion, which can yield the series expansion of the $q$-scale function. Section \ref{sec:inference} is the main part and gives the estimator for the approximate sequence of $q$-scale functions. 
Under a discrete sampling scheme for the underlying process $X$, we show their consistency and asymptotic normality with the explicit asymptotic variance. 
The auxiliary statistics, their properties, and some proofs are summarised in the Appendix.}

\begin{flushleft}
{\bf \large Notation}
\end{flushleft}
Throughout the paper, we use the following notation:  
\bi
\item $\N = \{1,2,3,\dots\}$; $\N_0:=\N \cup \{0\}$; $\R_+:=[0,\infty)$. 
\item For a $d\times d$ matrix $A=(a_{ij})_{1\le i,j\le d}$, denote by $|A| :=\l( \sum_{i,j=1}^d a_{ij}^2 \r)^{1/2}$. 

\item For $\mb{a}=(a_1,\dots,a_d)^\top$ and $\mb{b}=(b_1,\dots,b_d)^\top$, 
the inner product is given by $\mb{a}\cdot \mb{b} =\sum_{k=1}^d a_kb_k$.

\item Denote by $\mb{0}_d$ the $d$-dim zero vector, and $I_d$ the $d\times d$ identity matrix. 
Moreover,  $N_d(\mb{a}, \S)$ means the $d$-dim Gaussian distribution with mean vector $\mb{a}$ and covariance matrix $\S$. 
In particular, $N:=N_1$. 

\item $\I_A(x)$ is the indicator function on a set $A\subset \R$: $\I_A(x) = 1$ if $x\in A$; 0 otherwise. 

\item For functions $f,g$ on $\R$, denote by $f(x) \lesssim g(x)$ if there exists a constant $C>0$ such that 
$f(x) \le C g(x)$ for any $x\in \R$. 

\item For a function $f(x_1,x_2,\dots,x_d)$, denote by $\p_{x_i} f := \frac{\p f}{\p x_i}$ and $\p_{(x_1,\dots,x_d)} f = (\p_{x_1} f,\dots, \p_{x_d}f)^\top$. 

\item For $s\ge 1$ and $\a>0$, denote by 
\[
L^s_\a(\R_+) = \l\{f:\R_+\to \R\,:\, \int_0^\infty f^s(x)e^{-\a x}\,\df x < \infty \r\}. 
\]
In particular, we denote by $L^s(\R_+) := L^s_0(\R_+)$, and for $f,g\in L^2(\R_+)$, 
\[
\langle f,g\rangle := \int_0^\infty f(x)g(x)\,\df x,\quad \|f\|:= \sqrt{\langle f,f\rangle}. 
\]
\item For functions $f,g$ on $\R_+$ of finite variation, the convolution is defined as 
\[
f*g (x) := \int_{[0,x]}f(x-y)g(y)\,\df y,\quad x\ge 0. 
\]
Moreover, the $k$th convolution is defined by $f^{*0}= \d_0$ (Dirac's delta function concentrated on $0$) and $f^{*k} = f*f^{*(k-1)}$ for $k\in \N$.  

\item For a measure $\nu$ on $\R_+$ and a function $\mb{f}=(f_1,\dots,f_d)^\top:\R_+\to \R^d$, we denote by 
\[
\nu(\mb{f}) :=\int_{\R_+} \mb{f}(x)\,\nu(\df x) = \l(\int_{\R_+} f_1(x)\,\nu(\df x),\dots,\int_{\R_+} f_d(x)\,\nu(\df x)\r)^\top. 
\]
\ei
 
\section{Series representation of $q$-scale functions}\label{sec:q-scale}

\subsection{A compound geometric representation of $q$-scale functions}
A closed expression for the $q$-scale function of the process $X$ is found by Feng and Shimizu \cite{fs13} in an expectation with respect to a compound geometric distribution under the following {\it net profit condition}:  
\bi
\item[{\bf NPC}]: $\dis c > \int_0^\infty z\,\nu(\df z)$.   
\ei

To state the representation of $W^{(q)}$, we prepare some notation: In the sequel, we fix a number $q\ge 0$ for $W^{(q)}$, and let 
\[
D:=\frac{\s^2}{2};\qquad \g:=\Phi(q);\qquad 
\b=
\begin{cases}
cD^{-1} + \g, & (D>0) \\
\g, & (D=0)
\end{cases}; 
\]
\begin{align*}
\wt{f}_q(x) :=\l\{
\begin{aligned}
&D^{-1}\int_0^x \df y \int_y^\infty e^{-\b(x-y)}e^{-\g(z-y)}\,\nu(\df z), & (D>0) \\
&c^{-1}\int_x^\infty e^{-\g(z-x)}\,\nu(\df z), &  (D=0) 
\end{aligned}\r..
\end{align*} 
We define a distribution $F_q$ with a probability density obtained from the function $\wt{f}_q$: 
\[
F_q(x) := \int_0^x f_q(z)\,\df z,\quad f_q(z) = p^{-1} \wt{f}_q(z),\quad p := \int_0^\infty \wt{f}_q(z)\,\df z.  
\]
In fact, $p\in (0,1)$ under NPC and the probability density $f_q$ is well-defined; see Lemma \ref{lem:fq}. 
Then, the following representation is the immediate consequence from Feng and Shimizu \cite{fs13}, Proposition 4.1. 

\begin{lemma}\label{lem:fs11}
Suppose NPC for the process $X$. 
Then the $q$-scale function $W^{(q)}$ of $X$ has the following integral form: 
{\small 
\[ 
W^{(q)}(x) = \l\{
\begin{aligned}
&\frac{e^{\g x} - e^{-\b x}}{D(1-p)(\b+ \g)} - \frac{1}{D(1-p)(\b+ \g)}\int_{[0,x)} \l[\g e^{\g(x-z)} + \b e^{-\b(x-z)}\r] \ol{G}_q(z)\,\df z, & (D>0) \\
&\frac{e^{\g x}}{c(1-p)} - \frac{1}{c(1-p)}\int_{[0,x)} e^{\g(x-z)}\ol{G}_q(z)\,\df z,& (D=0)
\end{aligned}\r., 
\]
}
where $p\in (0,1)$ and $G_q$ is a compound geometric distribution function 
\begin{align}
G_q(x) = \sum_{k=0}^\infty (1-p)p^k \int_0^x f_q^{*k}(z)\,\df z,\quad x\ge 0, \label{Gq}
\end{align}
and $G_q(x)\equiv 0$ for $x<0$. 
\end{lemma}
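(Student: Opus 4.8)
The plan is to verify the stated formula by matching Laplace transforms, which is the natural route since $W^{(q)}$ is itself \emph{defined} through its transform $\int_0^\infty e^{-\th z}W^{(q)}(z)\,\df z = (\psi_X(\th)-q)^{-1}$ for $\th>\g=\Phi(q)$. I treat $D>0$ below; the case $D=0$ is the degenerate analogue in which the quadratic factor loses one root. First I would use the defining relation $\psi_X(\g)=q$ to factor the symbol: subtracting $\psi_X(\g)-q=0$ gives $\psi_X(\th)-q = D(\th^2-\g^2)+c(\th-\g)+\int_0^\infty(e^{-\th z}-e^{-\g z})\,\nu(\df z)$, and since $\b-\g=cD^{-1}$ one checks that the polynomial part is exactly $D(\th-\g)(\th+\b)$. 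Writing $J(\th):=\int_0^\infty(e^{-\g z}-e^{-\th z})\,\nu(\df z)$ this reads $\psi_X(\th)-q = D(\th-\g)(\th+\b)-J(\th)$.

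The heart of the argument is to identify $J(\th)/[D(\th-\g)(\th+\b)]$ as a Laplace transform of $\wt f_q$. I would write $\wt f_q = D^{-1}\,e_\b * m$ with $e_\b(u)=e^{-\b u}$ and $m(y)=\int_y^\infty e^{-\g(z-y)}\,\nu(\df z)$, so that the convolution theorem gives $\widehat{\wt f_q}(\th)=D^{-1}\widehat{e_\b}(\th)\,\widehat m(\th)$. Here $\widehat{e_\b}(\th)=(\th+\b)^{-1}$, and a Fubini exchange yields $\widehat m(\th)=\int_0^\infty e^{-(\th-\g)y}\df y\int_y^\infty e^{-\g z}\nu(\df z)=J(\th)/(\th-\g)$. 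Hence $\widehat{\wt f_q}(\th)=J(\th)/[D(\th-\g)(\th+\b)]=:p\,\widehat{f_q}(\th)$, and therefore $\psi_X(\th)-q=D(\th-\g)(\th+\b)\,(1-p\,\widehat{f_q}(\th))$, with $p\in(0,1)$ by Lemma~\ref{lem:fq}. Consequently
\[
\int_0^\infty e^{-\th z}W^{(q)}(z)\,\df z = \frac{1}{D(\th-\g)(\th+\b)}\cdot\frac{1}{1-p\,\widehat{f_q}(\th)}.
\]

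Next I read off the two factors separately. Expanding $1/(1-p\,\widehat{f_q}(\th))=\sum_{k\ge0}p^k\widehat{f_q}(\th)^k$ identifies it as the transform of the renewal measure $U(\df z):=\sum_{k\ge0}p^k f_q^{*k}(\df z)$, which by \eqref{Gq} equals $(1-p)^{-1}\df G_q$; meanwhile a partial-fraction split shows $[D(\th-\g)(\th+\b)]^{-1}$ is the transform of the kernel $k(y):=(e^{\g y}-e^{-\b y})/[D(\b+\g)]$ (for $D=0$ one has instead $k(y)=c^{-1}e^{\g y}$). By uniqueness of Laplace transforms and the convolution theorem I conclude $W^{(q)}=k*U$, i.e. $W^{(q)}(x)=(1-p)^{-1}\int_{[0,x]}k(x-z)\,\df G_q(z)$. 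Integrating by parts, using $\df G_q=(1-p)\d_0+g_q(z)\,\df z$, then converts this into the stated expression: differentiating the kernel produces $k'(x-z)=[\g e^{\g(x-z)}+\b e^{-\b(x-z)}]/[D(\b+\g)]$ tested against $\ol G_q(z)$, while the boundary term contributes $-k(0)\ol G_q(x)$, which vanishes precisely when $D>0$ since then $k(0)=0$.

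The main obstacle is the bookkeeping in this final integration by parts: $G_q$ carries an atom of mass $1-p$ at the origin and $\ol G_q$ is only right-continuous there, so the boundary contributions at $z=0$ and $z=x$ must be tracked carefully, and this is exactly the point at which the $D>0$ and $D=0$ expressions diverge (the nonvanishing $k(0)=c^{-1}$ for $D=0$ forces an extra $\ol G_q$-term). A secondary technical point is justifying the Fubini exchange and the convergence of every transform involved, which holds on $\{\th>\g\}$ under NPC together with $\int_0^1 z\,\nu(\df z)<\infty$; the positivity $p\in(0,1)$ and the well-definedness of the density $f_q$ may be quoted directly from Lemma~\ref{lem:fq}.
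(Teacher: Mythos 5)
Your proof is correct, but it is genuinely more self-contained than the paper's. The paper disposes of this lemma in two lines: it quotes the compound geometric representation $W^{(q)}(x)=\frac{1}{D(1-p)(\b+\g)}\int_{[0,x)}[e^{\g(x-z)}-e^{-\b(x-z)}]\,G_q(\df z)$ directly from Feng and Shimizu \cite{fs13}, Proposition 4.1, and then performs the same integration by parts (minding the atom of mass $1-p$ at the origin) that you carry out at the end. What you do differently is to re-derive that cited representation from the definition of $W^{(q)}$: the factorization $\psi_X(\th)-q=D(\th-\g)(\th+\b)\bigl(1-p\,\widehat{f_q}(\th)\bigr)$, the identification of $\wt f_q=D^{-1}e_\b*m$ with $\widehat{\wt f_q}(\th)=J(\th)/[D(\th-\g)(\th+\b)]$, and the geometric-series inversion giving $W^{(q)}=k*U$ with $U=(1-p)^{-1}G_q(\df\cdot)$ are all correct (the Fubini steps are justified by $\int_0^\infty z\,\nu(\df z)<\infty$ under NPC, and $p\widehat{f_q}(\th)\le p<1$ for $\th>\g$ makes the series converge). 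Your route buys a proof that does not lean on an external reference and makes transparent where $\wt f_q$, $p$ and $G_q$ come from; the paper's buys brevity. One further point in your favour: your careful bookkeeping of the boundary term $-k(0)\ol G_q(x)$ shows that for $D=0$ the correct expression is $\frac{e^{\g x}}{c(1-p)}-\frac{1}{c(1-p)}\l[\ol G_q(x)+\g\int_0^x e^{\g(x-z)}\ol G_q(z)\,\df z\r]$, which agrees with the definition of $Q_{\a,k}$ for $D=0$ in Definition \ref{def:WK} but not with the $D=0$ display in the lemma as printed (where the factor $\g$ and the term $\ol G_q(x)$ are missing); this appears to be a typo in the statement rather than an error in your argument.
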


\begin{proof}
See Section \ref{proof:fs11}. 
\end{proof}

\subsection{Laguerre series expansion}
Let $L_k(x)$ be the {\it Laguerre polynomial} of order $k$: 
\[
L_k(x) = \frac{1}{n!}e^x \frac{\df^n}{\df x^n} (e^{-x}x^n) = \sum_{j=0}^k {k\choose j} \frac{(-x)^j}{j!}, \quad x\in \R_+:=[0,\infty), 
\]
by which we define the {\it Laguerre function} for each $\a>0$ as follows: 
\[
\vp_{\a,k}(x) := \sqrt{2\a}L_k(2\a x) e^{-\a x},\quad x\in \R_+. 
\] 
Note that the system $\{\vp_{\a,k}\}_{k\in \N_0}$ consists of an orthogonal basis of $L^2(\R_+)$ 
satisfying that 
\begin{align}
\sup_{k\in \N_0,x\in \R_+}|\vp_{\a,k}(x)|\le \sqrt{2\a }, \label{vp-bdd}
\end{align}
and that, for any $k\in \N_0$, there exists some $\d>0$ such that 
\[
\vp_{\a,k}(x) = O(e^{-(\a - \d)x}),\quad x\to \infty,  
\]
since $L_k$ is a polynomial of order $k$. Therefore, we can control the decay order of the function $\vp_{k,\a}$ by choosing $\a>0${\red ; see Remark \ref{rem:alpha}. }

Let ${\cal W}_\a^r(\R_+)$ be the {\it Sobolev-Laguerre space} (Bongioanni and Torrea \cite{bt09}) for $\a, r>0$: 
\[
{\cal W}_\a^r(\R_+) := \l\{ f\in {\red L^2(\R_+)} \,:\, \sum_{k=0}^\infty k^r |\langle f,\vp_{\a,k}\rangle|^2 < \infty\r\} 
\]
There are many useful links between the Laguerre systems and the Sobolev-Laguerre space in Comte and Genon-Catalot \cite{cg15}, where Proposition 7.1 and its remark provide an equivalent condition that a function belongs to ${\cal W}_\a^r(\R_+)$ as follows. 

\begin{lemma}[Comte and Genon-Catalot \cite{cg15}]\label{lem:SL}
Let $r\in \N$ and $\a>0$. Then, $f\in {\cal W}_\a^r(\R_+)$ if and only if $f$ admits the derivatives of order $r$ and that  
\begin{align}
x^{m/2} \p_x^m f(x)   \in L^2_\a(\R_+),\quad 0\le m\le r. \label{SL-cond}
\end{align}
\end{lemma}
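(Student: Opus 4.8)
The plan is to recognize the Sobolev-Laguerre norm as a spectral norm for the Laguerre differential operator and then translate the spectral condition into the weighted derivative conditions \eqref{SL-cond}. Concretely, the functions $\vp_{\a,k}$ are the eigenfunctions of the (essentially self-adjoint, positive) Laguerre operator
\[
\mathcal{L}_\a := -\p_x\l(x\,\p_x\r) + \frac{\a^2 x}{4},
\qquad \mathcal{L}_\a\vp_{\a,k} = \a\l(k+\tfrac12\r)\vp_{\a,k},
\]
as one checks directly from the Laguerre ODE $xL_k''+(1-x)L_k'+kL_k=0$ after the substitution $x\mapsto 2\a x$ and conjugation by $e^{-\a x}$. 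Since $\a(k+\frac12)\asymp k$ for large $k$ and the single $k=0$ term is harmless, Parseval gives $\sum_k k^r|\langle f,\vp_{\a,k}\rangle|^2 \asymp \|\mathcal{L}_\a^{r/2}f\|^2$, so that $f\in\mathcal{W}_\a^r(\R_+)$ is equivalent to $f$ lying in the domain of $\mathcal{L}_\a^{r/2}$. The lemma is thus the identification of $\mathrm{Dom}(\mathcal{L}_\a^{r/2})$ with the space cut out by \eqref{SL-cond}; a convenient first reduction is the dilation $\vp_{\a,k}(x)=\sqrt\a\,\ell_k(\a x)$ (with $\ell_k$ the $\a=1$ Laguerre functions), which reduces everything to the case $\a=1$.

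The mechanism driving both implications is the set of ladder relations for the Laguerre functions, e.g. $\ell_k'=-\ell_k-2\sum_{j<k}\ell_j$ together with the companion relation for $x\ell_k'$ coming from $xL_k'=kL_k-kL_{k-1}$. These exhibit $\sqrt x\,\p_x$ as a first-order operator that shifts the index $k$ by one, and allow $\mathcal{L}_\a$ to be written (up to constants) as a product $A_\a^*A_\a$ of such first-order operators built from $\sqrt x\,\p_x$. Iterating this factorization $r$ times is exactly what produces the weighted derivatives $x^{m/2}\p_x^m f$ for $0\le m\le r$, which explains the half-power weight $x^{m/2}$: it is forced by the degenerate coefficient $x$ in front of $\p_x^2$ in $\mathcal{L}_\a$.

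I would then run both directions by induction on $r$ against the Parseval identity. For the \emph{only if} direction, starting from $f=\sum_k c_k\vp_{\a,k}$ with $\sum_k k^r|c_k|^2<\infty$, I would differentiate termwise — justified by the coefficient decay and the uniform bound \eqref{vp-bdd} — and use the ladder relations to show that the Laguerre coefficients of $x^{m/2}\p_x^m f$ are controlled by $k^{m/2}c_k$, hence square-summable for $m\le r$, so membership in the required space follows from Parseval. For the \emph{if} direction I would integrate by parts $r$ times in $\langle f,\vp_{\a,k}\rangle$, transferring $\mathcal{L}_\a^{r/2}$ onto $\vp_{\a,k}$ and extracting the eigenvalue factor $[\a(k+\frac12)]^{r/2}$, so that $|c_k|\lesssim (k+\tfrac12)^{-r/2}$ times an $L^2$ quantity, giving $\sum_k k^r|c_k|^2<\infty$.

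The main obstacle is not the top-order term but the intermediate orders $0<m<r$: knowing that $f$ and $\mathcal{L}_\a^{r/2}f$ are square-integrable does not visibly bound $x^{m/2}\p_x^m f$ for the in-between $m$, and conversely one must be sure the integrations by parts produce no boundary contribution at the degenerate endpoint $x=0$ (where $x^{m/2}$ vanishes but $\p_x^m f$ may be singular) nor at infinity. This is an elliptic-regularity/interpolation statement for the degenerate operator $\mathcal{L}_\a$; I would settle it either through the commutator algebra of $A_\a,A_\a^*$ and the self-adjointness of $\mathcal{L}_\a$, which interpolates the intermediate weighted norms between the $m=0$ and $m=r$ ones, or, most economically, by invoking Comte and Genon-Catalot \cite{cg15}, Proposition 7.1 and its remark, together with Bongioanni and Torrea \cite{bt09}, once the eigenfunction identification above has placed the problem in their framework.
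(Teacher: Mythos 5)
The paper does not actually prove this lemma: it is imported wholesale from Comte and Genon-Catalot \cite{cg15}, Proposition 7.1 and its remark (which in turn rests on Bongioanni and Torrea \cite{bt09}), so there is no in-paper argument to measure yours against, and your ``most economical'' fallback --- citing exactly those two references --- coincides with what the paper in fact does. Your sketch of a direct proof is a reasonable reconstruction of the mechanism behind the cited result: the $\vp_{\a,k}$ are indeed eigenfunctions of a degenerate Sturm--Liouville operator, Parseval converts the coefficient condition into membership in the domain of a fractional power of that operator, and a first-order factorization is what produces the half-power weights $x^{m/2}$. You also correctly locate where the real difficulty sits (the intermediate orders $0<m<r$ and the boundary behaviour at the degenerate endpoint $x=0$).

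As a standalone proof, however, the proposal has a genuine gap, and it is precisely the step you flag yourself: the identification of $\mathrm{Dom}(\mathcal{L}_\a^{r/2})$ with the space cut out by \eqref{SL-cond} \emph{is} the lemma, and neither direction is actually carried out --- both are described and then deferred to an unspecified ``commutator algebra'' or back to the citation. Two concrete points you would have to repair in a written-out version. First, the constants: with the paper's normalization $\vp_{\a,k}(x)=\sqrt{2\a}\,L_k(2\a x)e^{-\a x}$, the correct identity is $-\p_x\l(x\,\p_x \vp_{\a,k}\r)+\a^2 x\,\vp_{\a,k}=\a(2k+1)\vp_{\a,k}$, not potential $\a^2x/4$ with eigenvalue $\a(k+\tfrac12)$; harmless for the asymptotics, but it propagates through the factorization. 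Second, and more substantively, the natural factorization is $\mathcal{L}_\a-\a=A^*A$ with $A=\sqrt{x}\,(\p_x+\a)$, so iterating it produces weighted derivatives of the form $x^{m/2}(\p_x+\a)^m f=x^{m/2}e^{-\a x}\p_x^m(e^{\a x}f)$ rather than the pure derivatives $x^{m/2}\p_x^m f$ appearing in \eqref{SL-cond}. Translating between the two forms is not a triviality, since $x^{m/2}\p_x^j f$ for $j<m$ carries a heavier weight at infinity than $x^{j/2}\p_x^j f$; this translation is presumably the content of ``the remark'' in \cite{cg15} that the lemma invokes, and your outline does not address it.
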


The following uniform convergence of the Laguerre series expansion is obtained by Shimizu and Zhang \cite{sz19}, Proposition 3. 
\begin{lemma}\label{lem:sz19}
For any $K\in \N$, the partial sum of the Laguerre series expansion of $f\in {\cal W}_\a^r(\R_+)$: 
\[
f_K(x):= \sum_{k=0}^K \langle f,\vp_{\a,k}\rangle \vp_{\a,k}(x),\quad x\ge 0, 
\]
satisfies that 
\[
\sup_{x\in \R_+} |f_K(x) - f(x)|  = O\l(K^{-(r-1)/2}\r),\quad K\to \infty. 
\]
\end{lemma}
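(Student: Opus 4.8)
The plan is to read off the sup-norm decay of the Laguerre tail directly from the coefficient decay encoded in the Sobolev–Laguerre norm. Write $a_k := \langle f,\vp_{\a,k}\rangle$, so that $f = \sum_{k=0}^\infty a_k\vp_{\a,k}$ and $f_K - f = -\sum_{k=K+1}^\infty a_k\vp_{\a,k}$. Since $\{\vp_{\a,k}\}_{k\in\N_0}$ is a complete orthonormal system in $L^2(\R_+)$, the partial sums converge to $f$ in $L^2$; the task is to upgrade this to a uniform bound carrying the stated rate. First I would use the uniform bound \eqref{vp-bdd} to estimate
\[
\sup_{x\in\R_+}|f_K(x)-f(x)| \le \sum_{k=K+1}^\infty |a_k|\,\sup_{x\in\R_+}|\vp_{\a,k}(x)| \le \sqrt{2\a}\sum_{k=K+1}^\infty |a_k|,
\]
which reduces the whole problem to controlling the tail of the absolute coefficient series $\sum_{k>K}|a_k|$.

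Second, I would insert the Sobolev weight $k^{r/2}$ and split by Cauchy–Schwarz:
\[
\sum_{k=K+1}^\infty |a_k| = \sum_{k=K+1}^\infty \big(k^{r/2}|a_k|\big)\,k^{-r/2} \le \Bigg(\sum_{k=K+1}^\infty k^r|a_k|^2\Bigg)^{1/2}\Bigg(\sum_{k=K+1}^\infty k^{-r}\Bigg)^{1/2}.
\]
The first factor is bounded by $\big(\sum_{k=0}^\infty k^r|a_k|^2\big)^{1/2}$, which is finite by the very definition of $f\in{\cal W}_\a^r(\R_+)$ and therefore acts as a fixed constant. For the second factor an integral comparison gives, for $r>1$,
\[
\sum_{k=K+1}^\infty k^{-r} \le \int_K^\infty x^{-r}\,\df x = \frac{K^{-(r-1)}}{r-1},
\]
so $\big(\sum_{k>K}k^{-r}\big)^{1/2} = O(K^{-(r-1)/2})$. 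Combining the three displays yields $\sup_{x}|f_K(x)-f(x)| = O(K^{-(r-1)/2})$, as claimed.

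The same computation also justifies the uniform convergence that the statement presupposes: taking $K=0$ shows $\sum_k |a_k|\,\sup_x|\vp_{\a,k}(x)| < \infty$ whenever $r>1$, so $\sum_k a_k\vp_{\a,k}$ converges absolutely and uniformly to a continuous function, which must agree with the $L^2$-limit $f$. I expect the only genuinely delicate point to be this passage from $L^2$-convergence to uniform convergence with a rate; everything hinges on the summability $\sum_k|a_k|<\infty$, which is exactly what the Sobolev–Laguerre regularity supplies through Cauchy–Schwarz. The integral-comparison bound needs $r>1$, which is precisely the regime in which $O(K^{-(r-1)/2})$ is a nontrivial rate.
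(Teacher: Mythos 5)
Your argument is correct and is essentially the standard proof of this bound: the paper itself does not prove the lemma but quotes it from Shimizu and Zhang \cite{sz19}, Proposition 3, whose proof is exactly your combination of the uniform bound \eqref{vp-bdd} with a Cauchy--Schwarz split of the coefficient tail against $\sum_{k>K}k^{-r}$. Your closing remarks correctly identify the two implicit hypotheses ($r>1$, and identifying $f$ with the continuous version given by the uniformly convergent series), both of which hold in the paper's application to $\ol{G}_q$.
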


\subsection{Laguerre expansion of $\ol{G}_q$}

According to, e.g., Willmot and Lin \cite{wl01}, we see that the tail function $\ol{G}_q:=1 - G_q$ 
should satisfy the following defective renewal equation (DRE): 
 \begin{align}
 \ol{G}_q(x) =  p\ol{F}_q +  pf_q(x)* \ol{G}_q(x),\quad x\ge 0.  \label{DRE-Gq}
 \end{align}
We have the Laguerre expansion of $pf_q$, $p\ol{F}_q$ and $\ol{G}_q$ since these belong to $L^2(\R_+)$ as shown in Section \ref{sec:fFG}: for $x\ge 0$, 
\begin{align}
pf_{q}(x)=\sum_{k=0}^{\infty}a^{f}_{\a,k}\vp_{\a,k}(x);\quad
p\ol{F}_{q}(x)=\sum_{k=0}^{\infty}a^{F}_{\a,k}\vp_{\a,k}(x);\quad
\ol{G}_{q}(x)=\sum_{k=0}^{\infty}a^{G}_{\a,k}\vp_{\a,k}(x), \label{L-exps}
\end{align}
where $a^{f}_{\a,k}:=\langle pf_{q}, \vp_{\a,k}\rangle$, 
$a^{F}_{\a,k}:=\langle p\overline{F}_{q}, \vp_{\a,k}\rangle$ and 
$a^{G}_{\a,k}:=\langle \overline{G}_{q}, \vp_{\a,k}\rangle$. 

For arbitrary $K\in \N_0$, letting 
\begin{align*}
\mb{a}^{f}_{\a,K}&:=(a^{f}_{\a,0},a^{f}_{\a,1},...,a^{f}_{\a,K})^{\top}; \\
\mb{a}^{F}_{\a,K}&:=(a^{F}_{\a,0},a^{F}_{\a,1},...,a^{F}_{\a,K})^{\top}; \\
\mb{a}^{G}_{\a,K}&:=(a^{G}_{\a,0},a^{G}_{\a,1},...,a^{G}_{\a,K})^{\top}, 
\end{align*} 
we have the following relation among these coefficients; see Zhang and Su \cite{zs18}, Section 2, or Shimizu and Zhang \cite{sz19}, Proposition 2. 

\begin{lemma}\label{lem:zs}
Define a $(K+1)\times(K+1)$-matrix $A^f_{K}=(a_{kl})_{1\le k,l\le K+1}$, whose components are given as follows: 
\begin{align*}
    a_{kl}:=\l\{
        \begin{array}{cc}
        \dis 1-\frac{1}{\sqrt{2\a}}a^{f}_{\a,0}, &k=l,\\
        \dis -\frac{1}{\sqrt{2\a}}\left(a^{f}_{\a,k-l}-a^{f}_{\a,k-l-1}\r),&k>l,\\
        0,&k<l.
        \end{array}\r..
\end{align*}
Then the matrix $A^f_K$ is invertible, and it holds that 
\begin{align*}
\mb{a}^{G}_{\a,K}= (A^f_{K})^{-1} \mb{a}^{F}_{\a,K}.
\end{align*}
\end{lemma}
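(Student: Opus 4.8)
The plan is to convert the defective renewal equation \eqref{DRE-Gq} into a linear system among the Laguerre coefficients, the whole argument hinging on the way the basis $\{\vp_{\a,k}\}$ behaves under convolution. The one essential identity is
\[
\vp_{\a,k}*\vp_{\a,l} = \frac{1}{\sqrt{2\a}}\l(\vp_{\a,k+l}-\vp_{\a,k+l+1}\r),\qquad k,l\in \N_0,
\]
which I would prove by Laplace transform. A direct computation gives $\int_0^\infty e^{-sx}\vp_{\a,k}(x)\,\df x = \sqrt{2\a}\,(s-\a)^k(s+\a)^{-(k+1)}$, so the transform of the left-hand side is $2\a\,(s-\a)^{k+l}(s+\a)^{-(k+l+2)}$, which is exactly $(2\a)^{-1/2}$ times the difference of the transforms of $\vp_{\a,k+l}$ and $\vp_{\a,k+l+1}$; uniqueness of Laplace inversion then yields the identity.

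With this tool I would insert the three expansions \eqref{L-exps} into \eqref{DRE-Gq}. The convolution term becomes the double series $\sum_{k,l}a^f_{\a,k}a^G_{\a,l}\,(\vp_{\a,k}*\vp_{\a,l})$, and applying the identity rewrites it as $(2\a)^{-1/2}\sum_{k,l}a^f_{\a,k}a^G_{\a,l}\l(\vp_{\a,k+l}-\vp_{\a,k+l+1}\r)$. Reading off the coefficient of $\vp_{\a,m}$—it receives a $+$ contribution from every pair with $k+l=m$ and a $-$ contribution from every pair with $k+l=m-1$—and matching with the coefficients of $p\ol{F}_q$ and $\ol{G}_q$ produces, for each $m\ge 0$, the scalar recursion
\[
\l(1-\tfrac{1}{\sqrt{2\a}}a^f_{\a,0}\r)a^G_{\a,m} - \frac{1}{\sqrt{2\a}}\sum_{j=0}^{m-1}\l(a^f_{\a,m-j}-a^f_{\a,m-1-j}\r)a^G_{\a,j} = a^F_{\a,m}.
\]
Writing this out for $0\le m\le K$ is precisely the system $A^f_K\,\mb{a}^G_{\a,K}=\mb{a}^F_{\a,K}$ with the stated entries, the raising of the index under convolution being exactly what forces $A^f_K$ to be lower triangular.

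It then remains to invert $A^f_K$. Being lower triangular, its determinant is the product of its diagonal entries, all equal to $1-(2\a)^{-1/2}a^f_{\a,0}$. Since $a^f_{\a,0}=\langle pf_q,\vp_{\a,0}\rangle = \sqrt{2\a}\int_0^\infty pf_q(x)e^{-\a x}\,\df x$, one has $(2\a)^{-1/2}a^f_{\a,0}=\int_0^\infty pf_q(x)e^{-\a x}\,\df x \le \int_0^\infty pf_q(x)\,\df x = p <1$ under NPC (here $f_q$ is a density and $e^{-\a x}<1$ on $(0,\infty)$), so the diagonal is bounded below by $1-p>0$ and $A^f_K$ is invertible; left-multiplying by $(A^f_K)^{-1}$ gives the claim. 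The step I expect to require the most care is not this algebra but the analytic justification of the term-by-term convolution and the rearrangement of the double series: I would control it using the uniform bound \eqref{vp-bdd}, the fact (established in Section \ref{sec:fFG}) that $pf_q,\ol{G}_q\in L^2(\R_+)$ so their coefficient sequences are square-summable, and Young's inequality, which gives $pf_q*\ol{G}_q\in L^2(\R_+)$ since $pf_q\in L^1(\R_+)$, whence the partial sums converge and convolution commutes with the limit.
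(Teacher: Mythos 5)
The paper gives no proof of this lemma itself---it defers to Zhang and Su \cite{zs18} and Shimizu and Zhang \cite{sz19}---and your argument is a correct, self-contained reconstruction of exactly the route taken in those references: the convolution identity $\vp_{\a,k}*\vp_{\a,l}=(2\a)^{-1/2}(\vp_{\a,k+l}-\vp_{\a,k+l+1})$ (your Laplace-transform verification checks out, since $\int_0^\infty e^{-sx}\vp_{\a,k}(x)\,\df x=\sqrt{2\a}\,(s-\a)^k(s+\a)^{-(k+1)}$), coefficient matching in the defective renewal equation \eqref{DRE-Gq} against the orthonormal system, and invertibility from lower-triangularity with constant diagonal $1-(2\a)^{-1/2}a^f_{\a,0}\ge 1-p>0$ under NPC. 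The analytic points you flag at the end (term-by-term convolution justified by $pf_q\in L^1(\R_+)$, $\ol G_q\in L^2(\R_+)$, Young's inequality, and square-summability of the coefficient sequences) are the right ones, so I see no gap.
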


Using this system, we can construct a Laguerre expansion of $\ol{G}_q(x)$: 
for a vector $\mb{\vp}_{\a,K} = (\vp_{\a,0},\vp_{\a,1},\dots,\vp_{\a,K})^\top$, 
\begin{align}
\ol{G}_{q,K}(x):= \sum_{k=0}^K a_{\a,k}^G \vp_{\a,k}(x) = \mb{a}^{G}_{\a,K}\cdot \mb{\vp}_{\a,K} ,\quad x\in \R_+,  \label{Gq-bar}
\end{align}

\begin{lemma}\label{lem:tail}
If tail function of the L\'evy measure $\nu$, say $\ol{\nu}(x):=\int_x^\infty \nu(\df z)$, admits the derivatives up to order $r > 1$ and that {\red the distribution $G_q$ given in \eqref{Gq} has moments of any polynomial order.} Moreover,  there exists a constant $\k>0$ such that 
\begin{align}
\p_x^m \ol{\nu}(x) = O(1 + x^\k), \quad x\to \infty, \label{nu-bar}
\end{align}
for any $0\le m \le r-2$. 
Then we have $\ol{G}_q \in {\cal W}_\a^r(\R_+)$, and therefore 
\begin{align}
\sup_{x\in \R_+}\l|\ol{G}_{q,K}(x) - \ol{G}_q(x)\r|  = O\l(K^{-(r-1)/2}\r) \to 0,\quad K\to \infty. \label{Gq-uniform}
\end{align}
Furthermore, if the condition \eqref{nu-bar} is much milder such that
\begin{align}
\p_x^m \ol{\nu}(x)  = O\l( e^{\k x}\r), \quad x\to \infty, \label{nu-bar2}
\end{align}
Then, the consequence \eqref{Gq-uniform} also holds if we choose $\a > 2\k$. 
\end{lemma}

\begin{proof}
See Section \ref{proof:tail}.
\end{proof}

{\red 
\begin{remark}\label{rem:alpha}
Lemma \ref{lem:tail} tells us the meaning of the tuning parameter $\a>0$ in the Laguerre function. 
When we consider a model  where the L\'evy measure satisfies \eqref{nu-bar2}, we can take the value of $\a$ so that $a>2\k$. 
However, most standard cases where \eqref{nu-bar} holds true, we can choose any $\a>0$, in particular, $\a =1$ is simple and enough. 
\end{remark}
}

\subsection{Laguerre-type expansion for $q$-scale functions}
We can obtain a series expansion of the $q$-scale function by replacing $\ol{G}_q(z)$ in the expression of $W^{(q)}$ in Lemma \ref{lem:fs11} with the corresponding Laguerre expansion \eqref{Gq-bar}: for any $\a>0$ and $K\in \N$, 

\begin{defn}\label{def:WK}
For any $K\in \N$ and $\a>0$, the $K$th-Laguerre-type expansion of $W^{(q)}$, say $W^{(q)}_K$, 
is given by 
\begin{align}
W^{(q)}_K(x) := P(x;p,\g,D) - \mb{Q}_{\a,K}(x;p,\g,D)\cdot \mb{a}^G_{\a,K}, \label{W-approx}
\end{align}
where $\mb{Q}_{\a,K}:=\l(Q_{\a,0}, Q_{\a,1},\dots Q_{\a,K}\r)^\top$ with  
\begin{align*}
P(x;p,\g,D)&:=\l\{
    \begin{aligned} 
    &\frac{e^{\g x}-e^{-\b x}}{D(1-p)(\b+\g)},&D>0,\\
    &\frac{e^{\g x}}{c(1-p)},&D=0,
    \end{aligned}\r. \\
Q_{\a,k}(x;p,\g,D)&:=\l\{
   \begin{aligned}
   &\frac{\g \Psi_{\a,k}(x;\g)+\b\Psi_{\a,k}(x;-\b)}{D(1-p)(\b+\g)},&D>0,\\
   &\frac{\vp_{\a,k}(x)+\g \Psi_{\a,k}(x;\g)}{c(1-p)},&D=0
   \end{aligned}\r.
\end{align*} 
and 
\begin{align}
\Psi_{\a,k}(x;b):=\int_{0}^x e^{b(x-z)}\vp_{\a,k}(z)\,\df z,\quad b \in \R. \label{Psi}
\end{align}
\end{defn}

\begin{remark}
There is alternative version of the $q$-scale function $Z^{(q)}:\R \to [1,\infty)$, defined as 
\begin{align*}
Z^{(q)}(x) = 1 + q\int_0^x W^{(q)}(z)\,\df z, \quad x\in \R, 
\end{align*}
{\red where we regard that $\int_0^x = 0$ if $x<0$. Note that $W^{(q)}(x) = q^{-1} \p_x Z^{(q)}(x)$ as $q\ne 0$. }
The Laguerre type expansion of $Z^{(q)}$ is also defined as
\begin{align}
Z^{(q)}_K(x) &= 1 + q \int_0^x W^{(q)}_K(z)\,\df z \notag\\
&= 1 + q\l[ P^*(x; p,\g,D) - \mb{Q}^*_{\a,K}(x;p,\g,D)\cdot \mb{a}^G_{\a,K}\r], \label{Z-approx}
\end{align}
where $\mb{Q}^*_{\a,K}:=\l(Q^*_{\a,0}, Q^*_{\a,1},\dots Q^*_{\a,K}\r)^\top$ with  
\begin{align*} 
P^*(z;p,\g,D)&:=\l\{
    \begin{aligned}
    &\frac{\g^{-1}(1 - e^{\g x})-\b^{-1}(1-e^{-\b x})}{D(1-p)(\b+\g)},&D>0,\\
    &\frac{1 - e^{\g x}}{\g c(1-p)},&D=0,
    \end{aligned}\r. \\
Q^*_{\a,k}(x;p,\g,D)&:=\l\{
   \begin{aligned}
   &\frac{\Psi_{\a,k}(x;\g) - \Psi_{\a,k}(x;-\b)}{D(1-p)(\b+\g)},&D>0,\\
   &\frac{\Psi_{\a,k}(x;\g)}{c(1-p)},&D=0
   \end{aligned}\r.
\end{align*} 
According to Lemma \ref{lem:sz19}, if $\ol{G}_q \in {\cal W}_\a^r(\R_+)$ (e.g., $f:=\ol{G}_q$ satisfies \eqref{SL-cond}), then 
\[
\sup_{x\in \R_+}|W^{(q)}_K(x) -  W^{(q)}(x)| \to 0,\quad K\to \infty;  
\]
see \eqref{Gq-uniform}. Therefore, it follows for any compact sets $V\subset \R_+$ that 
\[
\sup_{x\in V}|Z^{(q)}_K(x) -  Z^{(q)}(x)| \to 0,\quad K\to \infty.  
\]
\end{remark}

\begin{remark}
The approximation formulas obtained in equations \eqref{W-approx} and \eqref{Z-approx} are essentially different from those in Xie \textit{et al.} \cite{xcz24}, and being more elementary. Their approximation focuses on the relationship between the probability density of the ``killed process" $X_{e_t}$ (where $e_t$ is an exponential random variable with mean 1) and its scale function, cleverly expanding the probability density into a Laguerre series. In our approximation, we utilize a Laguerre series expansion for the compound geometric distribution $\overline{G}_q$, essentially similar to the approach of Shimizu and Zhang \cite{sz19}. The utility of this approach becomes apparent in the subsequent part of statistical inference, where similar types of estimators as in \cite{sz19} are directly available and shown to be asymptotically normal, demonstrating an advantage of our method.
\end{remark}

{\red 
\section{Statistical inference: Main theorems}\label{sec:inference}
}

We will proceed with the statistical estimation of scale functions. 
When considering the practical use of $q$-scale functions, we usually {\red have} applications to actuarial science, where $X=(X_t)_{t\ge 0}$ is the dynamics of an insurance surplus model. 
Then, the coefficient of the linear term $c$ corresponds to the premium rate, so we specifically treat the value of $c$ as known in this paper.

\subsection{Sampling scheme}\label{sec:sampling}
Let $n\in \N$. 
We assume that the process $X=(X_t)_{t\ge 0}$ is observed at discrete time points $t_i^n:= i\D_n\ (i=0,1,\dots,n)$ for some number $\D_n>0$: 
\[
\mb{X}^n:=\{X_{t_i^n}\,|\, i = 0,1,\dots, n\}; \quad T_n:=n\D_n.  
\]
In particular, the initial value $X_{t_0^n} = x$ is assumed to be known. 
Moreover, we also assume that ``large" jumps of $X$ are observed, that is, for some number $\e_n>0$, 
we can identify the jumps whose sizes are larger than $\e_n$, which are finitely many on $[0,T_n]$: 
\[
\mb{J}^n:= \{\D X_t:= X_t - X_{t-} \,|\, t\in [0,T_n],\ |\D X_t| > \e_n \}. 
\]

Assume that we have data $\mb{X}^n \cup \mb{J}^n$, and consider the following asymptotics: 
\begin{align}
\D_n\to 0;\quad T_n\to \infty;\quad \e_n\to 0, \label{obs}
\end{align}
as $n\to \infty$. In the sequel, we always consider the limit $n\to \infty$ when considering the asymptotic symbols, 
and assume \eqref{obs} for the sampling scheme $(\D_n,T_n,\e_n)$ without specially mentioned. 

{\red 
\begin{remark}
The data $\mb{X}^n$ are assumed to be the data on the remaining amount of reserves that insurance companies record on a regular basis. On the other hand, $\mb{J}^n$ are assumed to be ``large" claims. 
It may seem unnatural to consider such a model with infinitely many jumps when considering insurance surplus, but this is a standard surplus approximation in risk theory. Arguing under asymptotics such as \eqref{obs} with data like $\mb{J}^n$ is a usual way of theoretically justifying that the more detailed claims data you collect as much as possible, the better the estimation. As a practical matter, it is not necessary to be able to observe infinitely many ``small" jumps in reality.
\end{remark}
}

We shall make some assumptions on the  scheme $(\D_n,T_n,\e_n)$: 
\bi
\item[{\bf S1}]: $n\D_n^2\to  0$. 
\item[{\bf S2}]: $\dis \int_0^{\e_n} z\,\nu(\df z) + \int_0^{\e_n} z^2\,\nu(\df z) = o(T_n^{-1/2})$.  
\ei
In addition, to ensure some integrability with respect to $\nu$, we will prepare the following moment conditions on $\nu$:  
\bi
\item[{\bf M{[$k$]}}]: For some given $k>0$, there exists some $\e\in (0,1)$ such that $\nu( |z|\vee |z|^{k + \e}) < \infty$. 
\ei 

\subsection{Main theorems}\label{sec:main}

For the statistical argument, we denote by $p_0, D_0$ and $\g_0$ the true values of parameters $p, D$ and $\g$, respectively. 
We will give estimators for approximations $W_K^{(q)}$ and $Z_K^{(q)}$, in which the parameters $p,D$ and $\g$ are replaced by their truth.  

According to the expressions \eqref{W-approx} and \eqref{Z-approx}, we construct estimators of $W_K^{(q)}$ and $Z_K^{(q)}$ as follows: 
\begin{align*}
\wh{W}^{(q)}_{K}(x)&:=P(x;\wh{p}_{n},\wh{\g}_{n},\wh{D}_{n}) -\mb{Q}_{\a,K}(x;\wh{p}_{n},\wh{\g}_{n},\wh{D}_{n})\cdot \wh{\mb{a}}^{G}_{\a,K}; \\
\wh{Z}^{(q)}_{K}(x)&:=q \l[P^*(x;\wh{p}_{n},\wh{\g}_{n},\wh{D}_{n}) -\mb{Q}^*_{\a,K}(x;\wh{p}_{n},\wh{\g}_{n},\wh{D}_{n})\cdot \wh{\mb{a}}^{G}_{\a,K}\r]. 
\end{align*}

The consistency and asymptotic normality for $\wh{W}^{(q)}_K$ are obtained for each $K\in \N$ as follows. 

\begin{thm}\label{thm:W}
Suppose the assumptions NPC, S1, S2, and M[2]. Then it holds true for any $q>0$, $K\in \N$ and $x \in \R_+$ that 
\[
\wh{W}^{(q)}_K(x) \toP W^{(q)}_K(x). 
\]
In particular, as $q=0$, we have the uniform consistency: 
\begin{align*}
\sup_{x\in\R_+}\l|\wh{W}^{(0)}_K(x)-W^{(0)}_K(x)\r|\toP 0. 
\end{align*}
In addition, suppose further M[4]. Then we have 
\[
\sqrt{T_{n}}\l(\wh{W}^{(q)}_{K}(x)-W^{(q)}_{K}(x)\r)\toD N\l(0,\sigma_{K}(x)\r), 
\]
where $\sigma_{K}(x):=[C_{K}(x)\G_{K}]\S_{K}[C_{K}(x)\G_{K}]^{\top}$ with 
$\S_{K}$ and $\G_{K}$ given in \eqref{S-K} and \eqref{G-K}),  respectively, and $C_K$ is the $(2K+4)$-dim vector given by 
\[
C_{K}(x):=\l(
\mb{Q}_{\a,K}(x;p_0,\g_0,D_0)^\top(A_K^f)^{-1}B_{K},
\p_{(p,\g)}\l[P(x;\g_0,p_0,D_0)-\mb{Q}_{\a,K}(x;p_0,\g_0,D_0)\cdot \mb{a}^{G}_{\a,K}\r]\r)^\top ,
\]
with the matrix $B_K$ given in Corollary \ref{cor:sz-G}. 
\end{thm}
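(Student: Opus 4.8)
The plan is to express the estimation error $\wh{W}^{(q)}_K(x) - W^{(q)}_K(x)$ as a smooth function of the auxiliary estimators $(\wh{\mb{a}}^G_{\a,K}, \wh{p}_n, \wh{\g}_n, \wh{D}_n)$ evaluated against their true values, and then apply the delta method together with the joint asymptotic normality already established in Theorem \ref{thm:asymNormal}. For the consistency claims, I would first observe that $P(x;\cdot,\cdot,\cdot)$ and each $Q_{\a,k}(x;\cdot,\cdot,\cdot)$ in Definition \ref{def:WK} are continuous in the parameters $(p,\g,D)$ (the integrals $\Psi_{\a,k}(x;b)$ depend continuously on $b$, hence on $\g$ and $\b$, and $\b$ is a smooth function of $(c,D,\g)$). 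Since $(\wh{p}_n,\wh{\g}_n,\wh{D}_n) \toP (p_0,\g_0,D_0)$ by Theorem \ref{thm:consist} and Lemmas \ref{lem:D-est}, \ref{lem:g-est}, and $\wh{\mb{a}}^G_{\a,K} \toP \mb{a}^G_{\a,K}$ by Corollary \ref{cor:sz-G}, the continuous mapping theorem yields the pointwise consistency $\wh{W}^{(q)}_K(x) \toP W^{(q)}_K(x)$ for each fixed $x$.

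For the uniform consistency when $q=0$, I would note that the $q=0$ case coincides with the $0$-scale function / ruin-probability setting of Shimizu and Zhang \cite{sz19}, where the relevant $P$ and $\mb{Q}_{\a,K}$ become bounded functions of $x$ on $\R_+$ (the $e^{\g x}$ growth is cancelled in the ruin-probability normalization, cf. \eqref{ruin prob}). The bound \eqref{vp-bdd} on $\vp_{\a,k}$ and the exponential decay of the Laguerre functions keep $\mb{Q}_{\a,K}(x;\cdot)$ uniformly bounded in $x$, so the parameter convergence propagates uniformly; I would make the supremum estimate explicit by splitting into the difference from perturbing $(\wh{p}_n,\wh{\g}_n,\wh{D}_n)$ and from perturbing $\wh{\mb{a}}^G_{\a,K}$, each controlled uniformly in $x$.

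For the asymptotic normality, fix $x$ and write $\wh{W}^{(q)}_K(x) - W^{(q)}_K(x)$ via a first-order Taylor expansion in the stacked vector $(\mb{a}^G_{\a,K}, p, \g, D)$ around the true value. The gradient in the $\mb{a}^G_{\a,K}$ direction is $-\mb{Q}_{\a,K}(x;p_0,\g_0,D_0)$, and the gradient in the $(p,\g)$ directions is the $\p_{(p,\g)}$ term appearing in $C_K(x)$; the contribution of $\wh{D}_n - D_0$ is negligible because $\sqrt{T_n}(\wh{D}_n - D_0)\toP 0$ by Lemma \ref{lem:D-est}, so the $D$-derivative term vanishes in the limit and does not enter $C_K(x)$. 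Next I would substitute the representation \eqref{eq:aG_an} from Corollary \ref{cor:sz-G}, namely $\sqrt{T_n}(\wh{\mb{a}}^G_{\a,K} - \mb{a}^G_{\a,K}) = -(A^f_K)^{-1}B_K\sqrt{T_n}(\wh{\mb{a}}^f_{\a,K}-\mb{a}^f_{\a,K}, \wh{\mb{a}}^F_{\a,K}-\mb{a}^F_{\a,K})^\top + o_p(1)$, which converts the $\mb{a}^G$-gradient term into the block $\mb{Q}_{\a,K}(x;p_0,\g_0,D_0)^\top(A^f_K)^{-1}B_K$ acting on the $(\wh{\mb{a}}^f,\wh{\mb{a}}^F)$ errors. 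Assembling the two contributions realizes the estimation error as $C_K(x)\,\G_K\,\sqrt{T_n}(\wh{\nu}_n(\wt{\mb{H}}_{\a,K}(\cdot;\th_0)) - \nu(\wt{\mb{H}}_{\a,K}(\cdot;\th_0))) + o_p(1)$, exactly matching the factorization used in the proof of Theorem \ref{thm:asymNormal} via \eqref{eq:all_an}. Applying that central limit result and Slutsky's lemma gives the stated Gaussian limit with variance $[C_K(x)\G_K]\S_K[C_K(x)\G_K]^\top$.

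The main obstacle I anticipate is bookkeeping the linear-algebra identities so that the gradient blocks line up correctly with $\G_K$, $A^f_K$, and $B_K$ to produce precisely the claimed $C_K(x)$; in particular, verifying that the $\mb{a}^G$-direction derivative combined with \eqref{eq:aG_an} yields the first block of $C_K(x)$ and that the $\p_{(p,\g)}$ terms land in the correct slots requires care, as does confirming that the matrices $(A^f_K)^{-1}$ and $B_K$ commute with the limiting operations (their estimated versions being replaced by deterministic limits via consistency). The delta-method regularity — differentiability of $P$ and $\mb{Q}_{\a,K}$ in $(p,\g,D)$ and the boundedness needed to pass to the limit — is routine given the explicit forms in Definition \ref{def:WK}, so I expect no analytic difficulty beyond the matrix bookkeeping.
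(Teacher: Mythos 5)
Your proposal is correct and follows essentially the same route as the paper: continuous mapping for pointwise consistency, a supremum split using the boundedness of $P$ and $Q_{\a,k}$ when $\g_0=0$ for the uniform statement, and for asymptotic normality a first-order (mean value) expansion in $(p,\g,D)$ with the $D$-term killed by $\sqrt{T_n}(\wh{D}_n-D_0)\toP 0$, followed by substitution of \eqref{eq:aG_an} and an appeal to Theorem \ref{thm:asymNormal}. The only cosmetic remark is that the boundedness at $q=0$ comes simply from $\wh{\g}_n=\g_0=0$ (so the exponentials degenerate), rather than from any cancellation tied to the ruin-probability normalization.
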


\begin{proof}
See Section \ref{proof:W}. 
\end{proof}

\begin{thm}\label{thm:Z}
Suppose the assumptions NPC, S1, S2, and M[2]. Then it holds true for any $q>0$, $K\in \N$ and $x \in \R_+$ that 
\[
\wh{Z}^{(q)}_K(x) \toP Z^{(q)}_K(x). 
\]
In particular, as $q=0$, we have the uniform consistency: for any compact set $V\subset \R_+$, 
\begin{align*}
\sup_{x\in V}\l|\wh{Z}^{(0)}_K(x)-Z^{(0)}_K(x)\r|\toP 0. 
\end{align*}
In addition, suppose further M[4]. Then we have 
\[
\sqrt{T_{n}}\l(\wh{Z}^{(q)}_K(x)-Z^{(q)}_K(x)\r)\toD N\l(0,\sigma^*_{K}(x)\r), 
\]
where $\sigma^*_{K}(x):=q^2[C^*_K(x)\G_K]\S_K[C^*_K(x)\G_K]^\top$ with 
$\S_K$ and $\G_K$ given in \eqref{S-K} and \eqref{G-K}),  respectively, and $C^*_K$ is the $(2K+4)$-dim vector given by 
\[
C^*_K(x):=\l(
\mb{Q}^*_{\a,K}(x;p_0,\g_0,D_0)^\top(A_K^f)^{-1}B_{K},
\p_{(p,\g)}\l[P^*(x;\g_0,p_0,D_0)-\mb{Q}^*_{\a,K}(x;p_0,\g_0,D_0)\cdot \mb{a}^{G}_{\a,K}\r]\r)^\top ,
\]
with the matrix $B_K$ given in Corollary \ref{cor:sz-G}. 
\end{thm}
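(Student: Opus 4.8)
The plan is to mirror the proof of Theorem \ref{thm:W} almost verbatim, exploiting that $Z^{(q)}_K$ has exactly the same parametric structure as $W^{(q)}_K$, with the pair $(P,\mb{Q}_{\a,K})$ replaced by the ``starred'' pair $(P^*,\mb{Q}^*_{\a,K})$ and an overall multiplicative factor $q$. For consistency I would first collect the building blocks already in hand: $\wh{\mb{a}}^{G}_{\a,K}\toP \mb{a}^{G}_{\a,K}$ by Corollary \ref{cor:sz-G}, and $(\wh{p}_n,\wh{\g}_n,\wh{D}_n)\toP (p_0,\g_0,D_0)$ by Theorem \ref{thm:consist} (through Lemmas \ref{lem:D-est} and \ref{lem:g-est}). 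Since each coordinate $P^*(x;\cdot)$ and $Q^*_{\a,k}(x;\cdot)$ is continuous in $(p,\g,D)$ for fixed $x$ — the starred analog of Lemma \ref{lem:PQ_C1}, which holds because $P^*$ and $Q^*_{\a,k}$ are the $x$-antiderivatives of $P$ and $Q_{\a,k}$ and hence inherit their parameter-regularity — the continuous mapping theorem gives $\wh{Z}^{(q)}_K(x)\toP Z^{(q)}_K(x)$ for each fixed $x$ and every $q\ge 0$.

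For the $q=0$ case I would note that $\g_0=\Phi(0)=0$ and that the leading factor $q$ annihilates the entire bracket, so $\wh{Z}^{(0)}_K\equiv 1\equiv Z^{(0)}_K$ and $\sup_{x\in V}|\wh{Z}^{(0)}_K(x)-Z^{(0)}_K(x)|=0$ identically; the restriction to a compact $V$ is kept only to parallel the deterministic approximation $Z^{(q)}_K\to Z^{(q)}$ of the earlier Remark, where $Z^{(q)}(x)$ itself grows in $x$. Were one to keep $q$ general here, the same bound as in Theorem \ref{thm:W} — splitting $\sup_{x\in V}|\wh{Z}^{(q)}_K-Z^{(q)}_K|$ into $\sup_{x\in V}$-differences of $P^*$ and $Q^*_{\a,k}$ in the parameters and of $\wh{\mb{a}}^{G}_{\a,K}-\mb{a}^{G}_{\a,K}$ — combined with the starred analog of Lemma \ref{lem:PQ_consis} and uniform continuity on $V$, would give the claim.

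For the asymptotic normality I would repeat the delta-method decomposition of the proof of Theorem \ref{thm:W}. Writing $\sqrt{T_n}(\wh{Z}^{(q)}_K(x)-Z^{(q)}_K(x))=q\,\sqrt{T_n}\big([P^*-\mb{Q}^*_{\a,K}\cdot\mb{a}^{G}_{\a,K}]_{\wh\th_n}-[\,\cdot\,]_{\th_0}\big)$ and splitting it into the three pieces $U_1^*,U_2^*,U_3^*$ corresponding respectively to the variation of $P^*$, of $\mb{Q}^*_{\a,K}$, and of $\wh{\mb{a}}^{G}_{\a,K}$, I would linearize $U_1^*$ and $U_2^*$ by the mean value theorem in $(p,\g,D)$; the $\p_D$ contributions drop out because $\sqrt{T_n}(\wh{D}_n-D_0)\toP 0$ by Lemma \ref{lem:D-est}, leaving the $\p_{(p,\g)}$ parts frozen at $\th_0$ up to $o_p(1)$. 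For $U_3^*$ I would substitute the representation \eqref{eq:aG_an} from Corollary \ref{cor:sz-G}. Collecting the three pieces expresses the whole quantity as $q\,C^*_K(x)\,\sqrt{T_n}(\wh{\mb{a}}^{f}_{\a,K}-\mb{a}^{f}_{\a,K},\,\wh{\mb{a}}^{F}_{\a,K}-\mb{a}^{F}_{\a,K},\,\wh{p}_n-p_0,\,\wh{\g}_n-\g_0)^\top+o_p(1)$, whereupon Theorem \ref{thm:asymNormal} and Slutsky's lemma deliver the limit $N(0,\sigma^*_K(x))$, the overall $q^2$ in $\sigma^*_K(x)=q^2[C^*_K(x)\G_K]\S_K[C^*_K(x)\G_K]^\top$ being exactly the square of the leading factor $q$.

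The only genuinely new ingredient beyond Theorem \ref{thm:W} is the parameter-regularity of the starred auxiliary functions, and I expect this — rather than any probabilistic step — to be the main (if mild) obstacle. Concretely one must check that $\Psi_{\a,k}(x;b)=\int_0^x e^{b(x-z)}\vp_{\a,k}(z)\,\df z$, with $b=\g$ or $b=-\b=-(cD^{-1}+\g)$, is continuously differentiable in $(p,\g,D)$ with derivatives locally bounded in $x$, so that $P^*$ and $Q^*_{\a,k}$ are $C^1$ in the parameters and the mean value theorem above applies. Since differentiation in the parameters commutes with the $x$-integration defining $P^*$ and $Q^*_{\a,k}$, this reduces to the unstarred estimates already recorded in Lemma \ref{lem:PQ_C1}, so the verification is routine bookkeeping.
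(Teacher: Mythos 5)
Your proposal is correct and follows essentially the same route as the paper, which itself proves Theorem \ref{thm:Z} simply by repeating the $U_1+U_2+U_3$ decomposition of Theorem \ref{thm:W} with $(P,\mb{Q}_{\a,K})$ replaced by $(P^*,\mb{Q}^*_{\a,K})$ and the factor $q$ carried through to arrive at \eqref{eq:ZqK_an}. The parameter-regularity of the starred functions that you flag as the only new ingredient is in fact already covered by Lemma \ref{lem:PQ_C1}, whose statements (a) and (b) include $P^*$ and $Q^*_{\a,K}$ explicitly, so nothing further is needed.
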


\begin{proof}
See Section \ref{proof:Z}. 
\end{proof}

\begin{cor}
\label{cor:WqK_ZqK_an}
Suppose the assumptions NPC, S1, S2, and M[2]. Then it holds true for any $q>0$, $K\in \N$ and $x \in \R_+$ that 
\begin{align*}
\sqrt{T_{n}}\l(
    \begin{matrix}
    \wh{W}^{(q)}_{K}(x)-W^{(q)}_{K}(x)\\
    \wh{Z}^{(q)}_{K}(x)-Z^{(q)}_{K}(x)
    \end{matrix}
\r)\toD N_{2}\l(\mathbf{0}_{2},\wt{\S}_{K}(x)\r),
\end{align*}
where 
\begin{align*}
\wt{\S}_{K}(x):=\l[\l(
    \begin{matrix}
    C_{K}(x)\\
    qC^{*}_{K}(x)
    \end{matrix}
\r)\G_{K}\r]\S_{K}\l[\l(
    \begin{matrix}
    C_{K}(x)\\
    qC^{*}_{K}(x)
    \end{matrix}
\r)\G_{K}\r]^{\top}.
\end{align*}
\end{cor}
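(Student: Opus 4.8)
The plan is to read the conclusion directly off the two one-line representations already obtained in the proofs of Theorems \ref{thm:W} and \ref{thm:Z}. The crucial structural observation is that both \eqref{eq:WqK_an} and \eqref{eq:ZqK_an} express the rescaled error as a fixed (deterministic) row vector applied to \emph{the same} random vector
\[
\mb{V}_n := \sqrt{T_n}\l(\wh{\mb{a}}^f_{\a,K}-\mb{a}^f_{\a,K},\ \wh{\mb{a}}^F_{\a,K}-\mb{a}^F_{\a,K},\ \wh{p}_n-p_0,\ \wh{\g}_n-\g_0\r)^\top,
\]
up to an additive $o_p(1)$ remainder. Consequently the joint behaviour of the pair is governed entirely by the single multivariate central limit theorem for $\mb{V}_n$ supplied by Theorem \ref{thm:asymNormal}, and no further covariance computation linking the two coordinates is needed.

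First I would stack \eqref{eq:WqK_an} and \eqref{eq:ZqK_an} into one vector identity,
\[
\sqrt{T_n}\l(
\begin{matrix}
\wh{W}^{(q)}_K(x)-W^{(q)}_K(x)\\
\wh{Z}^{(q)}_K(x)-Z^{(q)}_K(x)
\end{matrix}
\r)
=
\l(
\begin{matrix}
C_K(x)\\
qC^*_K(x)
\end{matrix}
\r)\mb{V}_n + o_p(1),
\]
where the $2\times(2K+4)$ coefficient matrix is deterministic. By Theorem \ref{thm:asymNormal} we have $\mb{V}_n \toD N_{2K+4}(\mb{0}_{2K+4}, \G_K\S_K\G_K^\top)$ with $\S_K$ and $\G_K$ as in \eqref{S-K} and \eqref{G-K}. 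Applying the continuous mapping theorem to the linear map given by the matrix above, together with Slutsky's lemma to absorb the $o_p(1)$ remainder, yields the asymptotic normality of the left-hand side.

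Finally I would identify the limiting covariance. Denoting by $M$ the $2\times(2K+4)$ matrix in the display above and recalling that a linear image $M\mb{V}$ of a Gaussian vector $\mb{V}\sim N(\mb{0},\Sigma_V)$ is Gaussian with covariance $M\Sigma_V M^\top$, the choice $\Sigma_V=\G_K\S_K\G_K^\top$ produces the limiting covariance $M\G_K\S_K\G_K^\top M^\top$, which rearranges exactly as the stated $\wt{\S}_K(x)=[M\G_K]\S_K[M\G_K]^\top$. There is essentially no analytic obstacle in this argument; the only point demanding care is the bookkeeping observation that both components are linear functionals of \emph{one and the same} vector $\mb{V}_n$, so that joint—rather than merely marginal—convergence follows immediately and the cross-covariance between $\wh{W}^{(q)}_K(x)$ and $\wh{Z}^{(q)}_K(x)$ is automatically encoded in $\wt{\S}_K(x)$.
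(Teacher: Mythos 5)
Your argument is exactly the one the paper intends: it states the corollary by ``noticing the expressions \eqref{eq:WqK_an} and \eqref{eq:ZqK_an}'', i.e.\ both rescaled errors are linear images (up to $o_p(1)$) of the single vector whose limit is given by Theorem \ref{thm:asymNormal}, and the joint normality with covariance $[M\G_K]\S_K[M\G_K]^\top$ follows by the continuous mapping theorem and Slutsky's lemma. Your write-up is correct and matches the paper's (implicit) proof.
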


\begin{proof}
See Section \ref{proof:WqK_ZqK_an}. 
\end{proof}

\begin{remark}
Our asymptotic results for $\wh{W}_K^{(q)}$ and $\wh{Z}_K^{(q)}$ are all for a fixed $K\in \N$, and one may concern with the case where $K=K_n\to \infty$ as well as $n\to \infty$. It is not a straightforward extension, even for consistency. For example, if we could show that, for each $x\in \R_+$, 
\[
\sup_{K\in \N}|\wh{W}_K^{(q)}(x) - W_K^{(q)}(x)| \toP 0,\quad n\to \infty, 
\]
then we can exchange the order of the limits $n\to \infty$ and $K\to \infty$, which concludes that 
 $\wh{W}_\infty^{(q)}(x) \toP W_\infty^{(q)}(x)$ for each $x\in \R_+$. On this point, we need further study. 
 As for the extension of the asymptotic normality, we need a more complicated discussion for extending to the case where $K$ depends on $n$ with $K=K_n\to \infty$. It will become a high-dimensional setting, and we need a high-dimensional central limit theorem for triangular arrays without even a martingale property. A sophisticated CLT would still need to be proven. 
\end{remark}

\noindent {\bf\large Acknowledgement.} This work is partially supported by JSPS KAKENHI Grant-in-Aid for Scientific Research (C) \#21K03358 Japan Science and Technology Agency CREST \#JPMJCR2115. Also, the authors sincerely thank the anonymous reviewers for their insightful comments, which have enhanced the quality of this paper. 

\ \vspace{7mm}\\
\noindent {\bf\Large Appendix}\appendix
\section{Some auxiliary statistics}

\subsection{Estimator of $D_0$}

For $D=\s^2/2$, we consider the following estimator proposed by Jacod \cite{j07} and Shimizu \cite{s11, s21}: for each fixed $T>0$, 
\[
{\red \wh{D}^{T}_{n}:=\frac{1}{2T}\l(\sum_{i=1}^{\lfloor T\D_{n}^{-1}\rfloor}|X_{i\D_{n}}-X_{(i-1)\D_{n}}|^{2}-\sum_{s\le T}|\D L_{s}|^{2}\I_{\{\D L_{s}>\e_n\}}\r). }
\]

\begin{lemma}[Shimizu \cite{s11}, Remark 3.2]\label{lem:D-est}
Under the assumptions S1 and S2, the estimator $\wh{D}^T_n$ is consistent to $D_0$ with the rate of convergence being faster than $\sqrt{T_n}$ such that, for any $t>0$, 
\begin{align*}
\sqrt{T_n}(\wh{D}^T_n - D_0) \toP 0. 
\end{align*} 
\end{lemma}

\begin{remark}
Since the constant $T>0$ in the estimator $\wh{D}^T_n$ can be arbitrary, we will fix it to be $T=1$ without loss of generality, and put 
\[
\wh{D}_n := \wh{D}_n^1, 
\]
in the sequel. In practice, we should appropriately choose the value of $T$ by looking at the number of data and the size of $\D_n$.
\end{remark}

\subsection{Estimator of $\nu$-functionals}

First, we would like to estimate the integral-type functional  $\nu (\mb{H}_\th)$, where $\mb{H}_\th:\R_+\to \R^d$, is a $\nu$-integrable function with an unknown parameter $\th\in \Th$: 
\[
\nu(\mb{H}_\th) := \l( \int_0^\infty H_\th^{(1)}(z)\,\nu(\df z), \dots, \int_0^\infty H_\th^{(d)}(z)\,\nu(\df z)\r)
\]
where $\Th$ is an open and bounded subset of $\R^l$ for some $l\in \N$. 
Note that the parameter $\theta$ can be a variety of parameters depending on the context. 
For instance, we will find that the parameter $p:=\int_0^\infty \wt{f}_q(z)\,\df z$, the coefficients of Laguerre expansion, e.g., $a_{\a,k}^f$ and $a_{\a,k}^F$, are all written in terms of the integral functional of $\nu$, later.

In short, we need to estimate the parameters $(D_0,\g_0, \nu(\mb{H}_{\th_0}))$, where $\th_0$ is the true parameter of $\th$. 
Hereafter, we assume that there exists an open and bounded set $\Th_1$ and $\Th_2$ of {\red $\R_+$} such that 
\[
(D_0,\g_0,\th_0)\in \Th_1\times \Th_2. 
\]
Moreover, we make the following assumptions on an integrands $\mb{H}_\th$, which are applied to a variety of $\mb{H}_\th$, locally in this section.
\bi
\item[H1{[$\d$]}]: {\red For each $\th\in \Theta$, there exists a $\d\ge 0$} such that $\nu(|\mb{H}_\th| \vee|\mb{H}_\th|^{2+\d}) < \infty$ 

\item [H2]: $\dis \sup_{\th \in \ol{\Th}} \nu(|\mb{H}_\th| \vee |\mb{H}_\th|^2) < \infty$. 

\item[H3]: There exists a $\nu$-integrable function $h_1:\R_+\to \R$ such that 
\[
\sup_{\th \in \ol{\Th}} |\mb{H}_\th(z)| \le h_1(z)
\]

\item[H4]: There exists a function $h_2:\R_+\to \R$ with $\nu(h_2\vee h_2^2) < \infty$ such that, for any $\k \in \R^l$, 
\[
\sup_{\th \in \ol{\Th}} |\mb{H}_{\th + \k}(z) - \mb{H}_\th(z)| \le h_2(z)|\k|. 
\] 
\item[H5]: For each $i=1,\dots,d$, 
\[
\int_0^{\e_n} H_\th^{(i)}(z)\,\nu(\df z) = o(T_n^{-1/2}). 
\]
\ei

As for functionals $\nu(\mb{H}_\th)$, we can use the following threshold-type estimator: 
\[
\wh{\nu}_n(\mb{H}_\th) := \frac{1}{T_n} \sum_{t \in (0, T_n]} \mb{H}_\th(\D L_t) \I_{\{\D L_t > \e_n\}}
\]

\begin{lemma}[Shimizu \cite{s11}, Propositions 3.1 and 3.2] \label{lem:nu-est}
\bi
\item[(i)] Under the assumption H1[0], we have 
\[
\wh{\nu}_n(\mb{H}_\th) \toP \nu(\mb{H}_\th),\quad \th \in \Th. 
\]
In addition, assuming further H2 and H4, we have the uniform consistency: 
\[
\sup_{\th \in \ol{\Th}} |\wh{\nu}_n(\mb{H}_\th) - \nu(\mb{H}_\th)| \to 0. 
\]

\item[(ii)] Under H1[$\d$] for some $\d>0$ and H5, we have 
\[
\sqrt{T_n}(\wh{\nu}_n(\mb{H}_\th) - \nu(\mb{H}_\th)) \toD N_d(\mb{0}_d,\S_\th), \quad \th \in \Th, 
\]
where $\S_\th = \l(\nu(H_\th^{(i)}H_\th^{(j)})\r)_{1\le i,j\le d}$. 

\ei
\end{lemma}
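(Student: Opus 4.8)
The plan is to exploit the fact that $L$ is a L\'evy subordinator, so its jumps are encoded by a Poisson random measure $\mu_L$ on $(0,\infty)\times(0,\infty)$ with intensity $\df t\otimes\nu(\df z)$. Rewriting the estimator as a Poisson integral
\[
\wh{\nu}_n(\mb{H}_\th) = \frac{1}{T_n}\int_0^{T_n}\int_{\e_n}^\infty \mb{H}_\th(z)\,\mu_L(\df t,\df z),
\]
its compensator is deterministic, giving $\E[\wh{\nu}_n(\mb{H}_\th)] = \int_{\e_n}^\infty \mb{H}_\th(z)\,\nu(\df z)$, so the bias equals $-\int_0^{\e_n}\mb{H}_\th(z)\,\nu(\df z)$. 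Every moment computation below then follows from the Campbell isometry for Poisson integrals, and the stationary independent increments of $L$ turn the time integral into a sum of i.i.d.\ unit-interval contributions (for each fixed $n$), which is the structural fact that drives both parts.

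For Part (i) I would argue in $L^2$. By the isometry the $i$th diagonal variance equals $T_n^{-1}\int_{\e_n}^\infty (H_\th^{(i)}(z))^2\,\nu(\df z)$, which tends to $0$ as $T_n\to\infty$ because H1$[0]$ bounds $\nu(|\mb{H}_\th|^2)$; the bias $-\int_0^{\e_n}\mb{H}_\th\,\df\nu$ tends to $0$ as $\e_n\to0$ by dominated convergence using $\nu$-integrability of $\mb{H}_\th$. Together these give $\wh{\nu}_n(\mb{H}_\th)\toP\nu(\mb{H}_\th)$. To upgrade to the uniform statement over $\ol{\Th}$ I would combine this pointwise law of large numbers with a stochastic-equicontinuity argument: H4 supplies a $\nu$-integrable Lipschitz envelope $h_2$ in $\th$, which with H2 and compactness of $\ol{\Th}$ yields a uniform law of large numbers by the standard chaining over a finite $\k$-net.

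For Part (ii) I would split $\sqrt{T_n}\,(\wh{\nu}_n(\mb{H}_\th)-\nu(\mb{H}_\th))$ into a centered stochastic part and the bias. The bias part is $-\sqrt{T_n}\int_0^{\e_n}\mb{H}_\th\,\df\nu$, which is $o(1)$ by H5. The stochastic part is $T_n^{-1/2}\int_0^{T_n}\int_{\e_n}^\infty \mb{H}_\th(z)\,\wt{\mu}_L(\df t,\df z)$ with $\wt{\mu}_L$ the compensated measure; viewing it as the normalized sum of i.i.d.\ unit-interval increments gives a triangular array to which I would apply the Lindeberg--Feller CLT (equivalently a martingale CLT for the Poisson integral). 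The predictable quadratic variation is $\int_{\e_n}^\infty H_\th^{(i)}(z)H_\th^{(j)}(z)\,\nu(\df z)\to\nu(H_\th^{(i)}H_\th^{(j)})$, i.e.\ it converges to $\S_\th$, while the Lyapunov condition reduces to $T_n^{-\d/2}\int_{\e_n}^\infty |\mb{H}_\th|^{2+\d}\,\df\nu\to0$, which holds because H1$[\d]$ bounds $\nu(|\mb{H}_\th|^{2+\d})$ and $T_n\to\infty$. This delivers $\sqrt{T_n}\,(\wh{\nu}_n(\mb{H}_\th)-\nu(\mb{H}_\th))\toD N_d(\mb{0}_d,\S_\th)$.

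The main obstacle is the CLT: because the truncation level $\e_n\to0$ changes with $n$, this is genuinely a triangular array rather than a fixed sequence, so the Lindeberg/Lyapunov condition must be verified while the summands themselves move, and the truncation bias must be annihilated precisely at the $\sqrt{T_n}$ scale, which is exactly what H5 is designed to guarantee. The analogous subtlety in Part (i) is passing from pointwise to uniform consistency, which rests entirely on the equicontinuity furnished by H4.
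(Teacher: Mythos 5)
The paper gives no internal proof of this lemma---it is imported wholesale from Shimizu (2011), Propositions 3.1 and 3.2---so there is no in-paper argument to compare against; your reconstruction is a correct account of the standard proof underlying the citation. The key structural observation is exactly right: the jump measure of the subordinator $L$ is a Poisson random measure with intensity $\df t\otimes\nu(\df z)$, so $\wh{\nu}_n(\mb{H}_\th)$ is a normalized Poisson integral over $(0,T_n]\times(\e_n,\infty)$; the Mecke/Campbell formulas give the bias $-\int_{(0,\e_n]}\mb{H}_\th\,\df\nu$ (vanishing by dominated convergence as $\e_n\to0$) and the componentwise variance $T_n^{-1}\int_{(\e_n,\infty)}|H_\th^{(i)}|^2\,\df\nu$, which under H1[0] yields $L^2$-consistency; the finite-net argument with the Lipschitz envelope $h_2$ from H4 and the uniform moment bound H2 upgrades this to uniformity over the compact $\ol{\Th}$; and the CLT follows from a triangular-array argument on the compensated integral, with H5 annihilating the truncation bias at the $\sqrt{T_n}$ scale. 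Two small points to tighten. First, in the Lyapunov step the $(2+\d)$-moment of a compensated Poisson integral over a unit interval is controlled by a Rosenthal/Bichteler--Jacod type inequality, $\E|Y_{n,1}|^{2+\d}\lesssim \nu(|\mb{H}_\th|^{2+\d})+\l(\nu(|\mb{H}_\th|^{2})\r)^{(2+\d)/2}$; both terms are bounded under H1[$\d$], so your conclusion $T_n^{-\d/2}\cdot O(1)\to0$ stands, but the second term should not be dropped silently---alternatively, verify the Lindeberg condition directly on the PRM integral via the bound $\I_{\{|\mb{H}_\th|>\ve\sqrt{T_n}\}}\le(|\mb{H}_\th|/(\ve\sqrt{T_n}))^{\d}$. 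Second, the convergence of the predictable covariation $\int_{(\e_n,\infty)}H_\th^{(i)}H_\th^{(j)}\,\df\nu\to\nu(H_\th^{(i)}H_\th^{(j)})$ is again dominated convergence using $\nu(|\mb{H}_\th|^2)<\infty$; it is worth saying so explicitly since it is the step that identifies $\S_\th$.
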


It will be easy to see the following version of the continuous mapping-type theorem holds for the estimator $\wh{\nu}_n(\mb{H}_\th)$: 
\begin{cor}\label{cor:nu-est}
Under the assumptions H1[0], H2--H4, it follows that 
\[
\wh{\nu}_n(\mb{H}_{\wh{\th}_n}) \toP \nu(\mb{H}_{\th_0}), 
\]
for any random sequence such that $\wh{\th}_n \toP \th_0\in \Th$. 
\end{cor}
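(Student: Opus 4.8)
The plan is to treat this as a plug-in (continuous-mapping) statement: the empirical functional $\wh{\nu}_n(\mb{H}_{\cdot})$ is evaluated at the random argument $\wh{\th}_n$ rather than at a fixed $\th$, and we wish to recover the value $\nu(\mb{H}_{\th_0})$ of the deterministic target functional at the limit. The natural device is the triangle inequality
\[
\l|\wh{\nu}_n(\mb{H}_{\wh{\th}_n}) - \nu(\mb{H}_{\th_0})\r| \le \l|\wh{\nu}_n(\mb{H}_{\wh{\th}_n}) - \nu(\mb{H}_{\wh{\th}_n})\r| + \l|\nu(\mb{H}_{\wh{\th}_n}) - \nu(\mb{H}_{\th_0})\r|,
\]
which separates a stochastic part (first term: the gap between the empirical and true functionals at the \emph{same} random argument) from a purely deterministic continuity part (second term). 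I would show each term is $o_\P(1)$.

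For the first term, the key point is that the pointwise consistency of Lemma \ref{lem:nu-est}(i) is not enough, precisely because the argument $\wh{\th}_n$ is random; one must instead invoke the uniform consistency
\[
R_n := \sup_{\th \in \ol{\Th}}\l|\wh{\nu}_n(\mb{H}_\th) - \nu(\mb{H}_\th)\r| \to 0,
\]
which holds under H1[0], H2 and H4. Since $\wh{\th}_n \toP \th_0 \in \Th$ and $\Th$ is open, I would localize onto the event $\{\wh{\th}_n \in \ol{\Th}\}$, whose probability tends to one; on that event the first term is dominated by $R_n$, hence vanishes in probability.

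For the second term I would use H4 to obtain Lipschitz continuity of the deterministic map $\th \mapsto \nu(\mb{H}_\th)$: integrating the bound $|\mb{H}_{\th+\k}(z) - \mb{H}_\th(z)| \le h_2(z)|\k|$ against $\nu$ gives
\[
\l|\nu(\mb{H}_{\th+\k}) - \nu(\mb{H}_\th)\r| \le \nu(h_2)\,|\k|,
\]
where $\nu(h_2) < \infty$ follows from $\nu(h_2 \vee h_2^2) < \infty$ in H4. Thus $|\nu(\mb{H}_{\wh{\th}_n}) - \nu(\mb{H}_{\th_0})| \le \nu(h_2)\,|\wh{\th}_n - \th_0| \toP 0$, and combining the two bounds completes the argument.

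The only genuine subtlety---and the reason assumptions H2--H4 and the closure $\ol{\Th}$ appear, with H3 guaranteeing that $\nu(\mb{H}_\th)$ is well defined throughout $\ol{\Th}$ via the integrable majorant $h_1$---is that substituting a random parameter into the empirical functional forces one to upgrade pointwise control to uniform control. Once the uniform consistency of Lemma \ref{lem:nu-est}(i) is routed in, what remains is the standard localization onto $\{\wh{\th}_n \in \ol{\Th}\}$ together with the elementary Lipschitz estimate, so I do not anticipate any real obstacle beyond correctly deploying the uniform bound.
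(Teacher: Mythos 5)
Your proof is correct, and its overall skeleton coincides with the paper's: the same triangle-inequality split into a stochastic term $|\wh{\nu}_n(\mb{H}_{\wh{\th}_n}) - \nu(\mb{H}_{\wh{\th}_n})|$ controlled by the uniform consistency of Lemma \ref{lem:nu-est}(i), a deterministic continuity term $|\nu(\mb{H}_{\wh{\th}_n}) - \nu(\mb{H}_{\th_0})|$, and a localization onto the event $\{\wh{\th}_n \in \Th\}$ whose probability tends to one. The one place you genuinely diverge is the continuity term: the paper handles it by extracting, from every subsequence, an almost surely convergent sub-subsequence and applying dominated convergence with the majorant $h_1$ from H3, thereby using only continuity of $\th\mapsto\mb{H}_\th(z)$ implicit in H4 plus the H3 envelope; you instead integrate the H4 bound directly to get the quantitative Lipschitz estimate $|\nu(\mb{H}_{\th_0+\k}) - \nu(\mb{H}_{\th_0})| \le \nu(h_2)\,|\k|$, which kills that term in one line since $\nu(h_2)<\infty$. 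Your route is more direct and even yields a rate for that term, at the cost of exploiting the full Lipschitz strength of H4 (which is available here anyway, since H4 is already needed for the uniform consistency); the paper's subsequence argument is the one that would survive if H4 were weakened to mere continuity in $\th$ with an integrable envelope. Both are valid, and the remaining bookkeeping (finiteness of $\nu(|\mb{H}_{\wh{\th}_n}|)$ on the localization event, guaranteed by H3) is handled correctly.
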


\begin{proof}
Note that there exists a sub-sub sequence $\{\wh{\th}_{n'}\}$ for any subsequence of $\{\wh{\th}_n\}$ such that $\wh{\th}_{n'} \to \th_0\ a.s.$. 
Then, under H3, we can apply the Lebesgue convergence theorem to obtain that $\nu(\mb{H}_{\wh{\th}_{n'}})\to\nu(\mb{H}_{\th_0})\ \ a.s.$
That is, the sequence $\{\nu(\mb{H}_{\wh{\th}_n})\}$ has a sub-subsequence that converges to $\nu(\mb{H}_{\th_0})$ almost surely, 
which implies that $\nu(\mb{H}_{\wh{\th}_n})\toP\nu(\mb{H}_{\th_0})$. 
Moreover, since $\P(\wh{\th}_n\notin\Th)\to 0$, it follows from Lemma \ref{lem:nu-est} that, for any $\e>0$, 
\begin{align*}
\P\l(|\wh{\nu}_{n}(\mb{H}_{\wh{\th}_n})-\nu(\mb{H}_{\th_0})|>\e\r)
&\le \P\l(\sup_{\th\in \ol{\Th}} |\wh{\nu}_{n}(\mb{H}_\th)-\nu(\mb{H}_\th)|>\e/2,\ \wh{\th}_n\in\Th\r)\\
&\qquad +\P\l(|\nu(\mb{H}_{\wh{\th}_n})-\nu(\mb{H}_{\th_0})|>\e/2,\ \wh{\th}_n\in\Th\r) \\
&\qquad + \P\l(\wh{\th}_n\notin\Th\r) \to 0. 
\end{align*}
This completes the proof. 
\end{proof}

\subsection{Estimator of $\g_0$} 

An estimator of the Lundberg exponent $\g_0 = \Phi(q)$ is found in Shimizu \cite{s11} as an $M$-estimator as follows: 
\begin{align*}
\wh{\g}_n = \I_{\{q > 0\}}\cdot \arg\inf_{r \in \Th_2}\l|c r + \wh{D}_n r^2 - \wh{\nu}_n(k_r) - q\r|^2, 
\end{align*}
where $k_r(z) = e^{-r z} - 1$. {\red This estimator is quite natural because the contrast function is a direct estimator of the Lundberg equation ``$\psi_X(r) - q =0$", and it is useful because it satisfies consistency and asymptotic normality as follows: }

\begin{lemma}[Shimizu \cite{s11}, Lemma 3.3]\label{lem:g-est}
Suppose the conditions NPC, S1, S2, and M[2] hold true. 
Then we have 
\[
\sqrt{T_n}(\wh{\g}_n - \g_0) \toD N_1(0,v_0^2), 
\]
where 
\[
v_0^2 = \frac{\nu(k_{\g_0}^2)}{ \l(c + 2D_0\g_0  + \nu(\p_r k_{\g_0})\r)^2}. 
\]
In particular, we have the representation (in the proof of Lemma 3.3 in \cite{s11}) that 
\begin{align}
\sqrt{T_n}(\wh{\g}_n - \g_0) = \sqrt{T_n}(\wh{\nu}_n(\wt{H}_{\g_0}) - \nu(\wt{H}_{\g_0})) + o_p(1). \label{H-g}
\end{align}
where $\wt{H}_r(z):= k_r(z) / \p_z \psi_X(z)$ and $k_r(z) = e^{-rz} - 1$.  

\end{lemma}

\section{Estimators of $p_0$, $\mb{a}^f_{\a,K}$, $\mb{a}^F_{\a,K}$ and $\mb{a}^{G}_{\a,K}$. }

First, we prepare some notation to give representations of $p_0$, $\mb{a}^f_{\a,K}$ and $\mb{a}^F_{\a,K}$ in terms of $\nu$-functionals. 

Hereafter we put  the parameter $\th := (D,\g) \in \ol{\Th} := \ol{\Th}_1\times \ol{\Th}_2$, and assume that the true values 
\[
\th_0 := (D_0,\g_0) \in \Th := \Th_1\times \Th_2, 
\]
where $\Th_1$ and $\Th_2$ are open and bounded subsets of $\R_+$. 

We define the following notation: As $D> 0$, for $\a>0$ and $k\in \N_0$, 
\begin{align*}
H_p(z;\th) &:= D^{-1} \int_0^z \df y \int_y^\infty e^{-\b (x-y)}e^{-\g (z-x)}\,\df x, \\
H_{\a,k}^f(z;\th)&:= D^{-1}\int_0^z \,\df y \int_y^\infty e^{-\b (x-y)}e^{-\g (z-x)}\vp_{\a,k}(x)\,\df x, \\
H_{\a,k}^F(z;\th)&:= D^{-1}\int_0^z \,\df y \int_y^\infty e^{-\b (x-y)}e^{-\g (z-x)}\Psi_{\a,k}(x;0)\,\df x. 
\end{align*}
Recall that $\Psi_{\a,k}(x;b)$ is given by \eqref{Psi} in Definition \ref{def:WK}. In particular, as $D=0$, 
\begin{align*}
H_p(z;\th) &:= c^{-1}\int_0^z e^{-\g (z-x)}\,\df x, \\
H_{\a,k}^f(z;\th)&:= c^{-1}\int_0^z e^{-\g (z-x)}\vp_{\a,k}(x)\,\df x, \\
H_{\a,k}^F(z;\th)&:= c^{-1}\int_0^z e^{-\g (z-x)}\Psi_{\a,k}(x;0)\,\df x. 
\end{align*}
As a convention in this paper, e.g., $\mb{H}_{\a,K}^{f} = (H_{\a,0}^f, \dots, H_{\a,K}^f)^\top$, among others. 

Then, it is straightforward to obtain the following expression by direct computations. 

\begin{lemma}\label{lem:paf}
It holds that 
\begin{align*}
p_0 = \nu(H_p(\cdot;\th_0)),\quad a_{\a,k}^{f}= \nu(H_{\a,k}^f(\cdot;\th_0)),\quad a_{\a,k}^F = \nu(H_{\a,k}^F(\cdot;\th_0)). 
\end{align*}
\end{lemma}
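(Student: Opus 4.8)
The plan is to express each of the three quantities as an ordinary Lebesgue integral of $\wt{f}_q$ against a suitable test function, substitute the defining double integral of $\wt{f}_q$, and then interchange the order of integration so that $\nu(\df z)$ becomes the outermost integral; the inner integral that remains is then read off as $H_p$, $H^f_{\a,k}$, or $H^F_{\a,k}$.

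First I would record the three starting identities. By definition $p=\int_0^\infty \wt{f}_q(x)\,\df x$, and since $pf_q=\wt{f}_q$ we have $a^f_{\a,k}=\langle pf_q,\vp_{\a,k}\rangle=\int_0^\infty \wt{f}_q(x)\vp_{\a,k}(x)\,\df x$. For the third, note $p\ol{F}_q(x)=p-\int_0^x\wt{f}_q(u)\,\df u=\int_x^\infty \wt{f}_q(u)\,\df u$, so
\[
a^F_{\a,k}=\int_0^\infty\vp_{\a,k}(x)\int_x^\infty\wt{f}_q(u)\,\df u\,\df x.
\]
A single Fubini interchange over the region $\{0\le x\le u\}$ turns this into $a^F_{\a,k}=\int_0^\infty \wt{f}_q(u)\Psi_{\a,k}(u;0)\,\df u$, since $\Psi_{\a,k}(u;0)=\int_0^u\vp_{\a,k}(x)\,\df x$. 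Thus all three quantities share the common shape $\int_0^\infty\wt{f}_q(x)\,g(x)\,\df x$ with $g\equiv 1$, $g=\vp_{\a,k}$, or $g=\Psi_{\a,k}(\cdot;0)$.

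Next I would substitute the definition of $\wt{f}_q$. In the case $D=0$ this produces the double integral $\int_0^\infty g(x)\int_x^\infty e^{-\g(z-x)}\,\nu(\df z)\,\df x$, and Fubini over $\{0\le x\le z\}$ yields $\int_{[0,\infty)}\nu(\df z)\int_0^z e^{-\g(z-x)}g(x)\,\df x$, i.e. $\nu(H_p)$, $\nu(H^f_{\a,k})$, $\nu(H^F_{\a,k})$ respectively. In the case $D>0$ the substitution produces a triple integral in $(x,y,z)$ over $\{0\le y\le x,\ y\le z\}$; moving $\nu(\df z)$ to the outside (fix $z$, so $y\in[0,z]$ and $x\in[y,\infty)$) gives the stated $H$-functions after reading off the inner $(y,x)$ integral.

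The only genuine issue is to license these interchanges. For $p$ every integrand is nonnegative, so Tonelli applies and simultaneously confirms $p=\nu(H_p)<\infty$ (indeed $p\in(0,1)$ by Lemma \ref{lem:fq}). For $a^f_{\a,k}$ and $a^F_{\a,k}$ the weights $\vp_{\a,k}$ and $\Psi_{\a,k}(\cdot;0)$ change sign, so absolute integrability must be checked before Fubini is invoked. This is where I would use that $\vp_{\a,k}$ is uniformly bounded by $\sqrt{2\a}$ (see \eqref{vp-bdd}), and that $\Psi_{\a,k}(\cdot;0)=\int_0^{\cdot}\vp_{\a,k}(z)\,\df z$ is bounded on $\R_+$ because $\vp_{\a,k}$ decays exponentially and is therefore integrable. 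Writing $C_k:=\sup_{x\in\R_+}|g(x)|<\infty$, the modulus of every integrand above is dominated by $C_k$ times the corresponding nonnegative integrand for $p$, whose total mass is $p<\infty$; hence Fubini is justified in each case and the identities follow. This boundedness-plus-finiteness estimate is the main (and essentially the only) technical step.
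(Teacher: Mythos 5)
Your proposal is correct and follows essentially the same route as the paper's proof: rewrite $p$, $a^f_{\a,k}$, $a^F_{\a,k}$ as integrals of $\wt{f}_q$ against $1$, $\vp_{\a,k}$, $\Psi_{\a,k}(\cdot;0)$ respectively, substitute the defining double integral of $\wt{f}_q$, and apply Fubini to pull $\nu(\df z)$ outside. The only difference is that you explicitly justify the interchange via the uniform bounds on $\vp_{\a,k}$ and $\Psi_{\a,k}(\cdot;0)$ together with $p<\infty$, a step the paper leaves implicit.
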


\begin{proof}
Use Fubini's theorem. We shall compute only $H_{\a,k}^F(z;\th)$: 
\begin{align*}
a^{F}_{\alpha,k}&:=\langle p\overline{F}_{q},\varphi_{\alpha,k}\rangle\\
&=\int_{0}^{\infty}p\overline{F}_{q}(x)\varphi_{\alpha,k}(x)\,\df x\\
&=\int_{0}^{\infty}p f_{q}(x)\left(\int_{0}^{x}\varphi_{\alpha,k}(u)\mathrm{d}u\r)\,\df x \qquad \mbox{(by integration-by-parts)}\\
&=\int_{0}^{\infty}\left(D^{-1}\int_{0}^{x}\mathrm{d}y\int_{y}^{\infty}e^{-\beta(x-y)}e^{-\g(z-y)}\nu(\mathrm{d}z)\r)\Psi_{\alpha,k}(x;0)\,\df x\\
&=\int_{0}^{\infty}\left(D^{-1}\int_{0}^{z}\mathrm{d}y\int_{y}^{\infty}e^{-\beta(x-y)}e^{-\g(z-y)}\Psi_{\alpha,k}(x;0)\,\df x\r)\nu(\mathrm{d}z)\\
&=\nu\l(H^{F}_{\a,k}(\cdot;\th)\r).
\end{align*}
The others are similar and omitted. 
\end{proof}

Thanks to this lemma, we have estimators of $p_0$, $a^f_{\a,k}$ and $a^F_{\a,k}$ as follows: 
\begin{align}
\wh{p}_n = \wh{\nu}_n(H_p(\cdot;\wh{\th}_n)),\quad \wh{a}_{\a,k}^f = \wh{\nu}_n(H_{\a,k}^f(\cdot;\wh{\th}_n)),\quad \wh{a}_{\a,k}^F = \wh{\nu}_n(H_{\a,k}^F(\cdot;\wh{\th}_n)), 
\end{align}
where 
\[
\wh{\th}_n = (\wh{D}_n, \wh{\g}_n)^\top. 
\]

\begin{thm}\label{thm:consist}
Suppose the assumptions NPC, S1, S2, and M[2]. Then the estimators $\wh{p}_n$, $ \wh{a}_{\a,k}^f$ and $\wh{a}_{\a,k}^F$ are consistent to their true values. In particular, it follows for each $\a>0$ and $K\in \N$ that 
\[
\l(\wh{\mb{a}}_{\a,K}^f, \wh{\mb{a}}_{\a,K}^F,\wh{p}_n\r)^\top \toP  \l(\mb{a}_{\a,K}^f, \mb{a}_{\a,K}^F,p_0\r)^\top. 
\]
where $p_0$ is the true value of $p=\int_0^\infty \wt{f}_q(z)\,\df z$. 
\end{thm}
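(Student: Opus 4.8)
The plan is to apply the continuous-mapping-type consistency result of Corollary~\ref{cor:nu-est} to each of the three integrands $H_p(\cdot;\th)$, $H_{\a,k}^f(\cdot;\th)$, and $H_{\a,k}^F(\cdot;\th)$, evaluated at the consistent estimator $\wh{\th}_n = (\wh{D}_n, \wh{\g}_n)^\top$. The representations in Lemma~\ref{lem:paf} already identify the true values $p_0$, $a_{\a,k}^f$, $a_{\a,k}^F$ as $\nu$-functionals of these integrands at $\th_0$, so the estimators $\wh{p}_n$, $\wh{a}_{\a,k}^f$, $\wh{a}_{\a,k}^F$ are exactly $\wh{\nu}_n(\mb{H}_{\wh{\th}_n})$ for the corresponding $\mb{H}$. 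Thus the theorem reduces to verifying (i) that $\wh{\th}_n \toP \th_0$, and (ii) that each integrand $\mb{H}_\th$ satisfies the hypotheses H1[0] and H2--H4 of Corollary~\ref{cor:nu-est} uniformly over $\th$ in a neighborhood of $\th_0$.

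For step (i), I would combine Lemma~\ref{lem:D-est}, which gives $\wh{D}_n \toP D_0$ (indeed at a rate faster than $\sqrt{T_n}$), with Lemma~\ref{lem:g-est}, which gives $\sqrt{T_n}(\wh{\g}_n - \g_0) \toD N_1(0,v_0^2)$ and hence $\wh{\g}_n \toP \g_0$; together these yield $\wh{\th}_n \toP \th_0$. The assumptions NPC, S1, S2, and M[2] are precisely what these two lemmas require, so no extra work is needed here.

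The substance of the proof lies in step (ii): checking the structural hypotheses on the three integrands. First I would record that $\b = cD^{-1}+\g$ (for $D>0$) is a smooth function of $\th=(D,\g)$ that stays bounded and bounded away from the degenerate regime as long as $\th$ ranges over the compact set $\ol{\Th}=\ol{\Th}_1\times\ol{\Th}_2$ with $\Th_1,\Th_2$ bounded away from $0$; the exponential kernels $e^{-\b(x-y)}$, $e^{-\g(z-x)}$ and the bounded Laguerre factors $\vp_{\a,k}$ (bounded by $\sqrt{2\a}$ via \eqref{vp-bdd}) then give clean pointwise bounds. The key estimate to establish is a domination of the form
\[
\sup_{\th\in\ol{\Th}} |\mb{H}_\th(z)| \lesssim z \wedge z^2, \qquad z\ge 0,
\]
at least for $H_p$ and $H_{\a,k}^f$, coming from the double integral $\int_0^z \df y\int_y^\infty(\cdots)$ over a region of size controlled by $z$; for $H^F_{\a,k}$ one picks up an extra polynomial-in-$z$ factor from $\Psi_{\a,k}(x;0)=\int_0^x\vp_{\a,k}$, which grows at most polynomially. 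Under M[2], which guarantees $\nu(|z|\vee|z|^{2+\e})<\infty$, such a domination yields H1[0], H2 (the squared functionals are also $\nu$-integrable), and H3 (the dominating function $h_1(z)\lesssim z\wedge z^2$, times a polynomial for the $F$-case, is $\nu$-integrable). For H4 I would differentiate the kernels in $\th$, using $\p_\b e^{-\b(x-y)} = -(x-y)e^{-\b(x-y)}$ and similarly in $\g$, to produce a Lipschitz-in-$\th$ bound with a dominating function $h_2$ carrying one extra polynomial factor in $z$; M[2] again ensures $\nu(h_2\vee h_2^2)<\infty$.

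The main obstacle I anticipate is the bookkeeping in step (ii) for $H^F_{\a,k}$: the nested integral against the polynomially growing $\Psi_{\a,k}(x;0)$ must be shown to retain enough decay so that the product with the jump measure $\nu$ remains integrable under only M[2], and the Lipschitz estimate for H4 must be uniform in $\th$ across the two kernels $e^{-\b(x-y)}$ and $e^{-\g(z-x)}$ simultaneously. Once these dominations are in hand, Corollary~\ref{cor:nu-est} delivers $\wh{p}_n\toP p_0$, $\wh{a}_{\a,k}^f\toP a_{\a,k}^f$, and $\wh{a}_{\a,k}^F\toP a_{\a,k}^F$ for each fixed $\a,k$; joint convergence of the finite vector $(\wh{\mb{a}}_{\a,K}^f,\wh{\mb{a}}_{\a,K}^F,\wh{p}_n)$ then follows since componentwise convergence in probability of a finite-dimensional vector is equivalent to joint convergence in probability.
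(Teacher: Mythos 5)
Your proposal is correct and follows essentially the same route as the paper: reduce each estimator to $\wh{\nu}_n(\mb{H}_{\wh{\th}_n})$ via Lemma \ref{lem:paf}, get $\wh{\th}_n \toP \th_0$ from Lemmas \ref{lem:D-est} and \ref{lem:g-est}, and verify H1[0], H2--H4 under M[2] by dominating the kernels (the paper outsources these dominations to Lemmas \ref{lem:pH} and \ref{lem:gH}, which in fact give the slightly cleaner bound $\sup_{\th\in\ol{\Th}}|H^F_{\a,k}(z;\th)|\lesssim z$, resolving the bookkeeping concern you raise). No gaps.
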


\begin{proof}
According to Lemma \ref{lem:paf}, $\wh{p}_n, \wh{a}_{\a,k}^f$ and $\wh{a}_{\a,k}^F$ are all represented by $\nu$-functionals, for which we can apply Corollary \ref{cor:nu-est} since $\wh{\th}_n = (\wh{D}_n,\wh{\g}_n) \toP \th_0=(D_0,\g_0)$ under our assumptions by Lemmas \ref{lem:D-est} and \ref{lem:g-est}. Therefore, we can check H1[0], H2--H4-type conditions for each $H_\th:=H_p$, $H_{\a,k}$ and $H_{\a,k}^F$, 
but it is straightforward under M[2] since H2 and H3 are true by Lemma \ref{lem:pH}, and H4 is also true by Lemma \ref{lem:gH}. 
\end{proof}

To state the asymptotic normality result, we define the following notation: 
\begin{align*}
\mb{H}_{\a,K}(z;\th)&:=(\mb{H}^{f}_{\a,K}(z;\th),\mb{H}^{F}_{\a,K}(z;\th),H_{p}(z;\th))^{\top} \in \R^{2K+3}; \\
\wt{\mb{H}}_{\a,K}(z;\th)&:=(\mb{H}_{\a,K}(z;\th),\wt{H}_{\g}(z))^{\top} \in \R^{2K+4},  
\end{align*}
where $\wt{H}_\g$ is given in Lemma \ref{lem:g-est}, \eqref{H-g}. 

\begin{thm}\label{thm:asymNormal}
Suppose the same assumptions as in Theorem \ref{thm:consist} and M[4]. 
Then we have the asymptotic normality : for each $\a>0$ and $K\in \N$, 
\begin{align*}
\sqrt{T_n}
    \begin{pmatrix}
    \wh{\mb{a}}^{f}_{\a,K}-\mb{a}^{f}_{\a,K}\\
    \wh{\mb{a}}^{F}_{\a,K}-\mb{a}^{F}_{\a,K}\\
    \wh{p}_{n}-p_0\\
    \wh{\g}_{n}-\g_0
    \end{pmatrix}
\toD N_{2K+4}\l(\mathbf{0}_{2K+4},\G_{K}\S_{K}\G_{K}^{\top}\r),
\end{align*}
where $\S_{K}:=(\s_{ij})_{1\leq i,j\leq 2K+4}$ with 
\begin{align}\label{S-K}
\s_{ij}:=\int_0^\infty \wt{H}_{\a,K}^{(i)}(z;\th_0)\wt{H}_{\a,K}^{(j)}(z;\th_0)\nu(\df z), 
\end{align}
and $\G_{K}$ is the $(2K+4)\times(2K+4)$-matrix denoted by 
\begin{align}\label{G-K}
\G_{K}:=
    \begin{pmatrix}
    I_{2K+3} & \nu(\p_{\g}\mb{H}_{\a,K}(\cdot;\th_0))\\
    \mathbf{0}_{2K+3}^\top  & 1
    \end{pmatrix}
\end{align}

\end{thm}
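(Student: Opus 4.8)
The plan is to collapse the whole four-block vector into a single multivariate central limit theorem for the threshold estimator $\wh{\nu}_n$ applied to the stacked integrand $\wt{\mb{H}}_{\a,K}(\cdot;\th_0)$, and then to recognise the target vector as a fixed linear image of that CLT through the matrix $\G_K$. Everything beyond Lemmas \ref{lem:nu-est}--\ref{lem:g-est} reduces to linearising the plug-in of the estimated parameter $\wh{\th}_n=(\wh{D}_n,\wh{\g}_n)$ inside the functionals $\mb{H}_{\a,K}$, so that the dependence on $\wh{\th}_n$ is absorbed into the deterministic matrix $\G_K$ acting on a fixed-$\th_0$ empirical process.

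First I would treat the first $2K+3$ components, writing each as
\[
\wh{\nu}_n(\mb{H}_{\a,K}(\cdot;\wh{\th}_n)) - \nu(\mb{H}_{\a,K}(\cdot;\th_0))
= \underbrace{\big[\wh{\nu}_n(\mb{H}_{\a,K}(\cdot;\wh{\th}_n)) - \nu(\mb{H}_{\a,K}(\cdot;\wh{\th}_n))\big]}_{\text{(A)}}
+ \underbrace{\big[\nu(\mb{H}_{\a,K}(\cdot;\wh{\th}_n)) - \nu(\mb{H}_{\a,K}(\cdot;\th_0))\big]}_{\text{(B)}}.
\]
For the deterministic plug-in term (B) I would Taylor-expand the $\nu$-functional in $\th$ about $\th_0$, dominating the second-order remainder by $|\wh{\th}_n-\th_0|^2=O_p(T_n^{-1})$ together with the moment bound M[4] (used through the integrand estimates \ref{lem:pH}, \ref{lem:gH} supporting Theorem \ref{thm:consist}), so that
\[
\sqrt{T_n}\,(\mathrm{B}) = \nu(\p_D\mb{H}_{\a,K}(\cdot;\th_0))\sqrt{T_n}(\wh{D}_n-D_0) + \nu(\p_\g\mb{H}_{\a,K}(\cdot;\th_0))\sqrt{T_n}(\wh{\g}_n-\g_0) + o_p(1).
\]
Since $\sqrt{T_n}(\wh{D}_n-D_0)\toP 0$ by Lemma \ref{lem:D-est}, the $D$-direction drops out, leaving only the term $\nu(\p_\g\mb{H}_{\a,K}(\cdot;\th_0))\sqrt{T_n}(\wh{\g}_n-\g_0)$, which is precisely the off-diagonal block of $\G_K$ in \eqref{G-K}.

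Next I would re-centre the stochastic term (A) at $\th_0$, comparing it with $\sqrt{T_n}[\wh{\nu}_n(\mb{H}_{\a,K}(\cdot;\th_0)) - \nu(\mb{H}_{\a,K}(\cdot;\th_0))]$; the difference is the increment of the centred empirical jump process indexed by $\th$ evaluated between $\th_0$ and $\wh{\th}_n$. By the Lipschitz control H4 this increment has integrand bounded by $h_2\,|\wh{\th}_n-\th_0|$ with $\nu(h_2\vee h_2^2)<\infty$, so its variance is of order $|\wh{\th}_n-\th_0|^2\,\nu(h_2^2)=O_p(T_n^{-1})$ and the increment is $o_p(1)$. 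This stochastic-equicontinuity step is the main obstacle, because $\wh{\th}_n$ is built from the very jumps entering $\wh{\nu}_n$; I would resolve the dependence by a subsequence/localisation argument on the $T_n^{-1/2}$-neighbourhood of $\th_0$ as in Shimizu \cite{s11}. Combining (A) and (B) and then inserting the expansion \eqref{H-g}, namely $\sqrt{T_n}(\wh{\g}_n-\g_0)=\sqrt{T_n}(\wh{\nu}_n(\wt{H}_{\g_0})-\nu(\wt{H}_{\g_0}))+o_p(1)$, expresses every block—including the last one, which is $\sqrt{T_n}(\wh{\g}_n-\g_0)$ itself—through the single quantity $\sqrt{T_n}(\wh{\nu}_n(\wt{\mb{H}}_{\a,K}(\cdot;\th_0))-\nu(\wt{\mb{H}}_{\a,K}(\cdot;\th_0)))$, exactly as $\G_K$ prescribes.

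Finally I would apply Lemma \ref{lem:nu-est}(ii) to the $\R^{2K+4}$-valued stacked integrand $\wt{\mb{H}}_{\a,K}(\cdot;\th_0)$: the hypotheses H1[$\d$] (for some $\d>0$) and H5 follow from M[4], S2 and the appendix bounds, giving
\[
\sqrt{T_n}\big(\wh{\nu}_n(\wt{\mb{H}}_{\a,K}(\cdot;\th_0)) - \nu(\wt{\mb{H}}_{\a,K}(\cdot;\th_0))\big) \toD N_{2K+4}(\mb{0}_{2K+4},\S_K)
\]
with $\S_K$ as in \eqref{S-K}. Since the preceding steps show that the target vector equals $\G_K$ times this quantity up to $o_p(1)$, Slutsky's lemma and the continuous mapping theorem yield $N_{2K+4}(\mb{0}_{2K+4},\G_K\S_K\G_K^{\top})$, which is the claim.
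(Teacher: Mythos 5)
Your overall architecture is the same as the paper's: linearise the plug-in of $\wh{\th}_n$, discard the $D$-direction via Lemma \ref{lem:D-est}, use \eqref{H-g} to fold $\wh{\g}_n$ into the same empirical functional, apply the multivariate CLT of Lemma \ref{lem:nu-est}(ii) to the stacked integrand $\wt{\mb{H}}_{\a,K}(\cdot;\th_0)$, and finish with Slutsky. The factorisation through $\G_K$ and the identification of $\S_K$ are exactly right.

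There are, however, two places where your justification does not hold as written. First, your treatment of the recentred term (A): the claim that the increment $(\wh{\nu}_n-\nu)(\mb{H}_{\a,K}(\cdot;\wh{\th}_n))-(\wh{\nu}_n-\nu)(\mb{H}_{\a,K}(\cdot;\th_0))$ has ``variance of order $|\wh{\th}_n-\th_0|^2\nu(h_2^2)$'' treats $\wh{\th}_n$ as deterministic; since $\wh{\th}_n$ is built from the very jumps entering $\wh{\nu}_n$, no such variance computation is available, and a crude triangle-inequality bound via H4 only gives $O_p(T_n^{-1/2})$, i.e.\ $O_p(1)$ after multiplication by $\sqrt{T_n}$ --- not $o_p(1)$. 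No localisation to a $T_n^{-1/2}$-neighbourhood is needed either: the paper sidesteps the issue entirely by Taylor-expanding \emph{inside} the empirical functional, writing $\wh{\nu}_n(\mb{H}_{\a,K}(\cdot;\wh{\th}_n))-\wh{\nu}_n(\mb{H}_{\a,K}(\cdot;\th_0))=\wh{\nu}_n\l(\int_0^1\p_{\th}\mb{H}_{\a,K}(\cdot;\th^n_u)\,\df u\r)(\wh{\th}_n-\th_0)$, so that the problematic term becomes $\l[\wh{\nu}_n-\nu\r]\l(\int_0^1\p_{\th}\mb{H}_{\a,K}(\cdot;\th^n_u)\,\df u\r)\cdot\sqrt{T_n}(\wh{\th}_n-\th_0)$, which is $o_p(1)\cdot O_p(1)$ by the \emph{uniform} consistency of Lemma \ref{lem:nu-est}(i) over $\ol{\Th}$ (guaranteed by Lemmas \ref{lem:pH} and \ref{lem:gH}). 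Second, your second-order Taylor remainder for the deterministic term (B) requires bounds on $\p_\th^2\mb{H}_{\a,K}$ that the appendix does not establish; the paper again uses only the first-order integral (mean-value) form together with continuity of $\th\mapsto\nu(\p_\g\mb{H}_{\a,K}(\cdot;\th))$ (its term $S_3'$), which needs nothing beyond M[2] and Lemma \ref{lem:pH}. Both defects are repairable with the tools already in the paper, but as stated the key $o_p(1)$ step is not proved.
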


\begin{proof}
We only show the case where $D>0$ because the proof for $D=0$ is similar. 

Firstly, thanks to Taylor's formula, it follows that 
\begin{align*}
&\sqrt{T_n}\Big(\wh{\nu}_{n}(\mb{H}_{\a,K}(\cdot;\wh{\th}_{n})) - \nu(\mb{H}_{\a,K}(\cdot;\th_0))\Big)\\
&=\sqrt{T_n}\l(\wh{\nu}_{n}(\mb{H}_{\a,K}(\cdot;\wh{\th}_{n}))-\wh{\nu}_{n}(\mb{H}_{\a,K}(\cdot;\th_0))\r) +\sqrt{T_n}\l(\wh{\nu}_{n}(\mb{H}_{\a,K}(\cdot;\th_0))-\nu(\mb{H}_{\a,K}(\cdot;\th_0))\r)\\
&=\wh{\nu}_{n}\l(\int_0^{1}\p_{\g}\mb{H}_{\a,K}(\cdot;\th^{n}_{u})\,\df u\r)\sqrt{T_n}\l(\wh{\g}_{n}-\g_0\r) 
 +\wh{\nu}_{n}\l(\int_0^{1}\p_{D}\mb{H}_{\a,K}(\cdot;\th^{n}_{u})\,\df u\r)\sqrt{T_n}(\wh{D}_{n}-D_0)\\
&\qquad +\sqrt{T_n}\l(\wh{\nu}_{n}(\mb{H}_{\a,K}(\cdot;\th_0))-\nu(\mb{H}_{\a,K}(\cdot;\th_0))\r)\\
&=\begin{pmatrix}
    I_{2K+3} & \wh{\nu}_{n}\l(\int_0^{1}\p_{\g}\mb{H}_{\a,K}(\cdot;\th^{n}_{u})\,\df u\r)
    \end{pmatrix} 
    \sqrt{T_n}
    \begin{pmatrix}
    \wh{\nu}_{n}(\mb{H}_{\a,K}(\cdot;\th_0))-\nu(\mb{H}_{\a,K}(\cdot;\th_0))\\
    \wh{\g}_{n}-\g_0
    \end{pmatrix}\\
&\qquad +\l[ \l(\wh{\nu}_{n}\l(\int_0^{1}\p_{D}\mb{H}_{\a,K}(\cdot;\th^{n}_{u})\,\df u\r)\I_{\{\wh{\th}_{n}\in\Th\}}\r)
+\wh{\nu}_{n}\l(\int_0^{1}\p_{D}\mb{H}_{\a,K}(\cdot;\th^{n}_{u})\,\df u\r)\I_{\{\wh{\th}_{n}\notin\Th\}} \r] \times \\
&\qquad \times \sqrt{T_n}(\wh{D}_{n}-D_0) =:S_{1}+S_{2}, 
\end{align*}
where $\th^{n}_{u}:=\th_0+u(\wh{\th}_{n}-\th_0),\ (u\in(0,1))$. 

As for $S_2$, it follows from Lemma \ref{lem:pH} that, for any $k\in \N_0$, 
\begin{align*}
\l|\int_0^{1}\p_{D}H_{\a,k}(z;\th^{n}_{u})\,\df u\I_{\{\wh{\th}_{n}\in\Th\}}\r|
\leq\sup_{\th\in\Th}|\p_{D}H_{\a,k}(z;\th)|\lesssim z
\end{align*}
Therefore the condition M[4] ensures that 
\begin{align*}
\nu\l(\l|\int_0^{1}\p_{D}H_{\a,k}(\cdot;\th^{n}_{u})\,\df u\I_{\{\wh{\th}_{n}\in\Th\}}\r|^{2}\r)<\infty.
\end{align*}
Moreover, it follows for any $\e>0$ that 
\begin{align*}
\l\{\wh{\nu}_{n}\l(\int_0^{1}\p_{D}\mb{H}_{\a,K}(\cdot;\th^{n}_{u})\,\df u\r)\I_{\{\wh{\th}_{n}\notin\Th\}}>\e\r\}
\subset\{\I_{\{\wh{\th}_{n}\notin\Th\}}=1\}=\{\wh{\th}_{n}\notin\Th\}. 
\end{align*}
Since $\wh{\th}_{n}\toP \th_0\in\Th$ by S1, S2 and M[2], it follows from Lemmas \ref{lem:D-est} and \ref{lem:nu-est} that 
\begin{align*}
S_2=(O_p(1)+o_p(1))\sqrt{T_n}\l(\wh{D}_n-D_0\r) \toP 0. 
\end{align*}

Secondly, we show the following convergence: 
\begin{align}
\wh{\nu}_{n}\l(\int_0^{1}\p_{\g}\mb{H}_{\a,K}(\cdot;\th^{n}_{u})\,\df u\r)
\toP \nu(\p_{\g}\mb{H}_{\a,K}(\cdot;\th_0)). \label{nu-int-pH}
\end{align}
Thanks to Fubini's theorem, we have that 
\begin{align*}
\begin{split}
\bigg|\wh{\nu}_{n}\bigg(\int_0^{1}&\p_{\g}\mb{H}_{\a,K}(\cdot;\th^{n}_{u})\,\df u\bigg)-\nu(\p_{\g}\mb{H}_{\a,K}(\cdot;\th_0))\bigg|\\
&\leq\l|\wh{\nu}_{n}\l(\int_0^{1}\p_{\g}\mb{H}_{\a,K}(\cdot;\th^{n}_{u})\,\df u\r)
-\nu\l(\int_0^{1}\p_{\g}\mb{H}_{\a,K}(\cdot;\th^{n}_{u})\,\df u\r)\r|\I_{\{\wh{\th}_{n}\in\Th\}}\\
&\qquad +\l|\wh{\nu}_{n}\l(\int_0^{1}\p_{\g}\mb{H}_{\a,K}(\cdot;\th^{n}_{u})\,\df u\r)
-\nu\l(\int_0^{1}\p_{\g}\mb{H}_{\a,K}(\cdot;\th^{n}_{u})\,\df u\r)\r|\I_{\{\wh{\th}_{n}\notin\Th\}}\\
&\qquad +\l|\nu\l(\int_0^{1}\p_{\g}\mb{H}_{\a,K}(\cdot;\th^{n}_{u})\,\df u\r)-\nu(\p_{\g}\mb{H}_{\a,K}(\cdot;\th_0))\r|\\
&\leq\int_0^{1}\l|\wh{\nu}_{n}(\p_{\g}\mb{H}_{\a,K}(\cdot;\th^{n}_{u}))-\nu(\p_{\g}\mb{H}_{\a,K}(\cdot;\th^{n}_{u}))\r|\,\df u\I_{\{\wh{\th}_{n}\in\Th\}}\\
&\qquad +\l|\wh{\nu}_{n}\l(\int_0^{1}\p_{\g}\mb{H}_{\a,K}(\cdot;\th^{n}_{u})\,\df u\r)
-\nu\l(\int_0^{1}\p_{\g}\mb{H}_{\a,K}(\cdot;\th^{n}_{u})\,\df u\r)\r|\I_{\{\wh{\th}_{n}\notin\Th\}}\\
&\qquad +\int_0^{1}\l|\nu(\p_{\g}\mb{H}_{\a,K}(\cdot;\th^{n}_{u}))-\nu(\p_{\g}\mb{H}_{\a,K}(\cdot;\th_0))\r|\,\df u\\
&=:S'_{1}+S'_{2}+S'_{3}
\end{split}
\end{align*}

As for $S'_1$, since 
\begin{align*}
S'_1\leq\sup_{\th\in\Th}\l|\wh{\nu}_{n}(\p_{\g}\mb{H}_{\a,K}(\cdot;\th))-\nu(\p_{\g}\mb{H}_{\a,K}(\cdot;\th))\r|
\end{align*}
we see from Lemmas \ref{lem:nu-est} and \ref{lem:gH} that $S'_1\toP 0$. 

As for $S'_2$, since  
\begin{align*}
\l\{|S'_{2}|>\e'\r\} \subset\{\I_{\{\wh{\th}_{n}\notin\Th\}}=1\}=\{\wh{\th}_{n}\notin\Th\}, 
\end{align*}
we have $S'_2\toP 0$ by the consistency of $\wh{\th}_n$.  

As for $S'_3$, note that 
\begin{align*}
S'_{3}&\leq\int_0^{1}\l|\nu(\p_{\g}\mb{H}_{\a,k}(\cdot;\th^{n}_{u}))-\nu(\p_{\g}\mb{H}_{\a,k}(\cdot;\th_0))\r|\I_{\{|\th^{n}_{u}-\th_0|<\delta'\}}\,\df u\\
&\quad +\int_0^{1}\l|\nu(\p_{\g}\mb{H}_{\a,k}(\cdot;\th^{n}_{u}))-\nu(\p_{\g}\mb{H}_{\a,k}(\cdot;\th_0))\r|\I_{\{|\th^{n}_{u}-\th_0|\geq\delta'\}}\,\df u. 
\end{align*}
The first term on the right-hand side goes to zero in probability due to the continuity of the mapping $\th\mapsto\nu(\p_{\g}\mb{H}_{\a,K}(\cdot;\th))$, which is a consequence by the Lebesgue convergence theorem with Lemma \ref{lem:pH} and the condition M[2]. 
The second term on the right-hand side also goes to zero in probability by the consistency of $\th_u^n$ to $\th_0$. 
Hence we see that $S'_{3}\toP 0$, which concludes the convergence \eqref{nu-int-pH}. 

Finally, we can show that $\sqrt{T_n}\l(\wh{\nu}_{n}(\wt{\mb{H}}_{\a,K}(\cdot;\th_0))-\nu(\wt{\mb{H}}_{\a,K}(\cdot;\th_0))\r)$ is asymptotically normal 
by checking the conditions in Lemma \ref{lem:nu-est} with $\mb{H}_\th := \wt{\mb{H}}_{\a,K}(\cdot;\th)$. 
In fact, it follows from Lemma \ref{lem:pH} and M[4] that $\nu(|\mb{H}_\th|^4) < \infty$ for each $\th\in \Th$, and that 
H5 also holds true since, by Lemma \ref{lem:pH} and (S2), 
\begin{align*}
\int_0^{\e_{n}}
\l(H^{f}_{\a,k}(z;\th)+H^{F}_{\a,k}(z;\th)+H_{p}(z;\th)+H_{\g_0}(z)\r)\nu(\,\df z)
\lesssim \int_0^{\e_{n}}z\nu(\,\df z)
=o(T_n^{-1/2}),\quad n\to\infty.
\end{align*}
As a consequence, we see from Lemma \ref{lem:nu-est} that 
\begin{align}
\sqrt{T_n}\l(\wh{\nu}_{n}(\wt{\mb{H}}_{\a,K}(\cdot;\th_0))-\nu(\wt{\mb{H}}_{\a,K}(\cdot;\th_0))\r)\toD N_{2K+4}(\mathbf{0}_{2K+4},\S_{K}),
\label{eq:all_an}
\end{align}
where $\S_K$ is given in \eqref{G-K}. Now, putting 
\begin{align*}
\wh{\G}_{K}:=\l(
    \begin{matrix}
    I_{2K+3} & \wh{\nu}_{n}\l(\int_0^{1}\p_{\g}\mb{H}_{\a,K}(\ \cdot\ :\th_0)\,\df u\r)\\
    \mathbf{0}_{2K+3} & 1
    \end{matrix}
\r), 
\end{align*}
we have the expression 
\begin{align*}
\sqrt{T_n}\l(
    \begin{matrix}
    \wh{\mb{a}}^{f}_{\a,K}-\mb{a}^{f}_{\a,K}\\
    \wh{\mb{a}}^{F}_{\a,K}-\mb{a}^{F}_{\a,K}\\
    \wh{p}_{n}-p_0\\
    \wh{\g}_{n}-\g_0
    \end{matrix}
\r)&=\sqrt{T_n}\l(
    \begin{matrix}
    \wh{\nu}_{n}(\mb{H}_{\a,K}(\cdot;\wh{\th}_{n}))-\nu(\mb{H}_{\a,K}(\cdot;\th_0))\\
    \wh{\g}_{n}-\g_0 
    \end{matrix}
\r)\\
&=\wh{\G}_{K}\cdot
\sqrt{T_n}\l(\wh{\nu}_{n}(\wt{\mb{H}}_{\a,K}(\cdot;\th_0))-\nu(\wt{\mb{H}}_{\a,K}(\cdot;\th_0))\r)+o_{p}(1)
\end{align*}
Since $\wh{\G}_K\toP \G_K$ by \eqref{nu-int-pH}, Slutsky's lemma yields the consequence, and the proof is completed. 
\end{proof}

According to Shimizu and Zhang \cite{sz19}, Theorems 1 and 2 and their proofs, we find the following results. 
\begin{cor}\label{cor:sz-G}
Under the same assumptions as in Theorem \ref{thm:consist}, it holds that 
\[
\wh{\mb{a}}^{G}_{\a,K}\toP \mb{a}^{G}_{\a,K}. 
\]
In addition, suppose further the condition M[2], then $\wh{\mb{a}}^{G}_{\a,K}$ is asymptotically normal since 
\begin{equation}
\label{eq:aG_an}
\sqrt{T_{n}}\l(\wh{\mb{a}}^{G}_{\a,K}-\mb{a}^{G}_{\a,K}\r)=-(A_K^f)^{-1}\wh{B}_{K}\sqrt{T_{n}}\l(
    \begin{matrix}
    \wh{\mb{a}}^{f}_{\a,K}-\mb{a}^{f}_{\a,K}\\
    \wh{\mb{a}}^{F}_{\a,K}-\mb{a}^{F}_{\a,K}
    \end{matrix}
\r)
\end{equation}
where $\wh{B}_K$ is a consistent estimator of the the $(K+1)\times(2K+2)$ matrix $B_K$ given as follows:  
$B_K := (B^{*}_{K}, -I_{K+1})$, where $B^{*}_{K}:=(b_{kl})_{1\leq k,l\leq K+1}$, whose elements are given by 
\begin{align*}
b_{kl}:=\l\{
    \begin{array}{cc}
    \dis -\frac{1}{\sqrt{2\a}}a^{G}_{\a,0}, & k=l,\\
    \dis -\frac{1}{\sqrt{2\a}}\l(a^{G}_{\a,k-l}-a^{G}_{\a,k-l-1}\r), & k>l,\\
    \dis 0, & k<l.
    \end{array}\r.
\end{align*}
For example, an estimator $\wh{B}_K$ is obtained by replacing $a^{G}_{\a,k}$, each element of $B_K$, by its consistent estimator $\wh{a}^{G}_{\a,k}$. 
\end{cor}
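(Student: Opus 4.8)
The plan is to view $\wh{\mb{a}}^{G}_{\a,K}$ as the plug-in estimator $(\wh{A}^f_K)^{-1}\wh{\mb{a}}^{F}_{\a,K}$, where $\wh{A}^f_K$ is the matrix $A^f_K$ of Lemma \ref{lem:zs} with every entry $a^{f}_{\a,k}$ replaced by $\wh{a}^{f}_{\a,k}$, and to transport the already-available limit theorems for $(\wh{\mb{a}}^{f}_{\a,K},\wh{\mb{a}}^{F}_{\a,K})$ through the map $(\mb{a}^{f},\mb{a}^{F})\mapsto (A^f_K)^{-1}\mb{a}^{F}$. For the consistency claim I would note that each entry of $A^f_K$ is an affine function of the coordinates of $\mb{a}^{f}_{\a,K}$, hence $\wh{A}^f_K\toP A^f_K$ by Theorem \ref{thm:consist}; since $A^f_K$ is invertible by Lemma \ref{lem:zs} and matrix inversion is continuous on the open set of invertible matrices, the continuous mapping theorem yields $(\wh{A}^f_K)^{-1}\toP (A^f_K)^{-1}$. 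Combining this with $\wh{\mb{a}}^{F}_{\a,K}\toP \mb{a}^{F}_{\a,K}$ gives $\wh{\mb{a}}^{G}_{\a,K}=(\wh{A}^f_K)^{-1}\wh{\mb{a}}^{F}_{\a,K}\toP (A^f_K)^{-1}\mb{a}^{F}_{\a,K}=\mb{a}^{G}_{\a,K}$.

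For the asymptotic normality I would linearize the defining relation $A^f_K\mb{a}^{G}_{\a,K}=\mb{a}^{F}_{\a,K}$. Subtracting it from the plug-in relation $\wh{A}^f_K\wh{\mb{a}}^{G}_{\a,K}=\wh{\mb{a}}^{F}_{\a,K}$ gives
\begin{align*}
A^f_K\l(\wh{\mb{a}}^{G}_{\a,K}-\mb{a}^{G}_{\a,K}\r)=\l(\wh{\mb{a}}^{F}_{\a,K}-\mb{a}^{F}_{\a,K}\r)-\l(\wh{A}^f_K-A^f_K\r)\wh{\mb{a}}^{G}_{\a,K}.
\end{align*}
The crucial step is the identity $\l(\wh{A}^f_K-A^f_K\r)\mb{a}^{G}_{\a,K}=-B^{*}_{K}\l(\wh{\mb{a}}^{f}_{\a,K}-\mb{a}^{f}_{\a,K}\r)$, with $B^{*}_{K}$ the lower-triangular matrix built from $\mb{a}^{G}_{\a,K}$ in the statement. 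This is the discrete analogue of the commutativity of convolution: both $A^f_K$ and $B^{*}_{K}$ are lower-triangular Toeplitz-type matrices whose sub-diagonal entries are first differences of the underlying coefficient sequences, so applying the $\mb{a}^{f}$-matrix to the fixed vector $\mb{a}^{G}_{\a,K}$ returns the same vector as applying the $\mb{a}^{G}$-matrix to $\mb{a}^{f}_{\a,K}$; a short summation-by-parts (equivalently, induction on the matrix size) verifies it entrywise. Substituting this identity, multiplying through by $(A^f_K)^{-1}$, and writing $[-B^{*}_{K},\ I_{K+1}]=-B_K$ reproduces exactly the linear representation \eqref{eq:aG_an} with the true matrices in place of their estimators.

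It then remains to replace the true matrices by their consistent plug-in versions and to invoke the joint central limit theorem. Using $\wh{\mb{a}}^{G}_{\a,K}$ inside $(\wh{A}^f_K-A^f_K)\wh{\mb{a}}^{G}_{\a,K}$ rather than $\mb{a}^{G}_{\a,K}$ contributes, after scaling by $\sqrt{T_n}$, a term of order $|\wh{A}^f_K-A^f_K|\cdot|\wh{\mb{a}}^{G}_{\a,K}-\mb{a}^{G}_{\a,K}|\cdot\sqrt{T_n}=o_p(1)$ by the consistency already shown and the $\sqrt{T_n}$-rate of $\wh{\mb{a}}^{f}_{\a,K}$; likewise $(\wh{A}^f_K)^{-1}\toP (A^f_K)^{-1}$ and $\wh{B}_K\toP B_K$ since $\wh{a}^{G}_{\a,k}\toP a^{G}_{\a,k}$. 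Theorem \ref{thm:asymNormal} provides the joint normality of $\sqrt{T_n}(\wh{\mb{a}}^{f}_{\a,K}-\mb{a}^{f}_{\a,K},\wh{\mb{a}}^{F}_{\a,K}-\mb{a}^{F}_{\a,K})^{\top}$ as the leading $(2K+2)$-dimensional sub-vector of its conclusion, so Slutsky's lemma applied to $-(\wh{A}^f_K)^{-1}\wh{B}_K$ times this vector yields the asserted Gaussian limit. The main obstacle is precisely the Toeplitz-commutativity identity that pins the Jacobian down as $-(A^f_K)^{-1}B_K$; once that is in hand the rest is a standard delta-method-plus-Slutsky assembly.
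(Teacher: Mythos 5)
Your proposal is correct in substance and follows essentially the route the paper itself relies on (the paper defers the proof to Shimizu and Zhang \cite{sz19}, Theorems 1 and 2, whose argument is exactly this linearization of $A^f_K\mb{a}^{G}_{\a,K}=\mb{a}^{F}_{\a,K}$ against its plug-in version). One sign slip: the Toeplitz-commutativity identity should read $\l(\wh{A}^f_K-A^f_K\r)\mb{a}^{G}_{\a,K}=+B^{*}_{K}\l(\wh{\mb{a}}^{f}_{\a,K}-\mb{a}^{f}_{\a,K}\r)$ (check the $1\times 1$ case: both sides equal $-\tfrac{1}{\sqrt{2\a}}a^{G}_{\a,0}(\wh{a}^{f}_{\a,0}-a^{f}_{\a,0})$); with the minus sign you wrote, the substitution would produce $[B^{*}_{K},\,I_{K+1}]$ rather than $-B_K=[-B^{*}_{K},\,I_{K+1}]$, so your stated identity and your stated conclusion are mutually inconsistent, though the conclusion is the correct one. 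A small further remark: if in your decomposition you keep $\wh{\mb{a}}^{G}_{\a,K}$ (not $\mb{a}^{G}_{\a,K}$) in the cross term and accordingly build $B^{*}_{K}$ from the estimated coefficients, the same commutativity gives $\wh{B}^{*}_{K}\l(\wh{\mb{a}}^{f}_{\a,K}-\mb{a}^{f}_{\a,K}\r)=\l(\wh{A}^f_K-A^f_K\r)\wh{\mb{a}}^{G}_{\a,K}$ exactly, so \eqref{eq:aG_an} holds as an exact algebraic identity with $\wh{B}_K$ and no $o_p(1)$ remainder is needed; your version with the true $B_K$ plus an $o_p(1)$ term is asymptotically equivalent and suffices for the Gaussian limit.
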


\section{Auxiliary lemmas and some proofs}\label{sec:mis}

\subsection{On functions $f_{q}$, $F_{q}$ and $G_{q}$}\label{sec:fFG}

\begin{lemma} \label{lem:fq}
Under NPC, the probability density $f_q$ is well-defined and uniformly bounded.
\end{lemma}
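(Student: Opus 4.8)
The plan is to verify three things in turn: nonnegativity of $\wt{f}_q$, that $p:=\int_0^\infty \wt{f}_q(z)\,\df z$ lies in $(0,1)$ so that $f_q=p^{-1}\wt{f}_q$ is a bona fide probability density, and uniform boundedness of $\wt{f}_q$ (hence of $f_q$). Nonnegativity is immediate: in both the $D>0$ and $D=0$ cases $\wt{f}_q$ is an integral of nonnegative integrands (recall $\b,\g\ge 0$, and NPC gives $c>\int_0^\infty z\,\nu(\df z)\ge 0$, so $c>0$ and $\b=cD^{-1}+\g>0$ when $D>0$). The substance lies in evaluating $p$ and in the boundedness.

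First I would compute $p$ by Fubini's theorem, valid since every integrand is nonnegative. For $D=0$, swapping the $x$- and $z$-integrations gives $p=c^{-1}\int_0^\infty(\int_0^z e^{-\g(z-x)}\,\df x)\,\nu(\df z)=(c\g)^{-1}\int_0^\infty(1-e^{-\g z})\,\nu(\df z)$ when $\g>0$, and $p=c^{-1}\int_0^\infty z\,\nu(\df z)$ when $\g=0$. For $D>0$, integrating out $x\in[y,\infty)$ (using $\b>0$) and then $y\in[0,z]$ collapses the triple integral to $p=(D\b\g)^{-1}\int_0^\infty(1-e^{-\g z})\,\nu(\df z)$, with the same modification at $\g=0$.

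The decisive step is to read off the surviving integral from the Laplace exponent. Since $\g=\Phi(q)$ satisfies $\psi_X(\g)=q$, the formula for $\psi_X$ yields $\int_0^\infty(1-e^{-\g z})\,\nu(\df z)=c\g+D\g^2-q=D\b\g-q$, using $D\b=c+D\g$. Hence $p=1-q/(D\b\g)$ for $D>0$ (and $p=1-q/(c\g)$ for $D=0$). This gives $p\in(0,1)$: the numerator $D\b\g-q=\int_0^\infty(1-e^{-\g z})\,\nu(\df z)$ is strictly positive whenever $\nu\not\equiv 0$, so $p>0$, while $q>0$ gives $p<1$. In the boundary case $q=0$ one has $\g=\Phi(0)=0$, since $\psi_X$ is convex with $\psi_X(0)=0$ and $\psi_X'(0+)>0$ (this is exactly NPC), so $\psi_X$ is strictly increasing and vanishes only at $0$; the formula then degenerates to $p=c^{-1}\int_0^\infty z\,\nu(\df z)$, which is $<1$ by NPC and $>0$ for $\nu\not\equiv0$. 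NPC also supplies $\int_0^\infty z\,\nu(\df z)<\infty$, used below.

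For uniform boundedness with $D>0$, I would bound the two exponentials by $1$ (their arguments $x-y$ and $z-y$ are nonnegative), obtaining $\wt{f}_q(x)\le D^{-1}\int_0^x \ol{\nu}(y)\,\df y\le D^{-1}\int_0^\infty \ol{\nu}(y)\,\df y=D^{-1}\int_0^\infty z\,\nu(\df z)<\infty$ uniformly in $x$ (the last equality is Fubini), whence $f_q\le (Dp)^{-1}\int_0^\infty z\,\nu(\df z)$. For $D=0$ the analogous estimate is $\wt{f}_q(x)\le c^{-1}\ol{\nu}(x)\le c^{-1}\ol{\nu}(0+)$, which is finite and uniform provided $\ol{\nu}(0+)=\nu((0,\infty))<\infty$, i.e. in the finite-activity (Cram\'er--Lundberg) regime for which that branch is intended. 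This is the main obstacle, and in fact a genuine caveat: when $D=0$ and $\nu$ has infinite activity, $\ol{\nu}(x)\to\infty$ as $x\to0$ and $\wt{f}_q$ truly blows up near the origin, so uniform boundedness can hold only when $D>0$ or under finite activity. Since the boundedness is invoked downstream only in the $D>0$ branch (e.g. in the proof of Lemma~\ref{lem:}), the $D>0$ estimate above is what is actually needed.
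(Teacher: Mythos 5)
Your proof is correct and, for the key quantitative step, takes a more self-contained route than the paper. The paper disposes of $p\in(0,1)$ by citing Lemma~3.2 of Feng and Shimizu \cite{fs13}, whereas you derive it from first principles: Fubini collapses $p$ to $(D\b\g)^{-1}\int_0^\infty(1-e^{-\g z})\,\nu(\df z)$, and the identity $\psi_X(\g)=q$ together with $D\b=c+D\g$ gives the clean formula $p=1-q/(D\b\g)$ (with the $\g=0$ degeneration handled separately via NPC). That buys a closed form for $p$ and makes transparent exactly where NPC and $\nu\not\equiv 0$ enter, at the cost of a page of computation the paper avoids by citation. Your uniform-boundedness argument for $D>0$ --- bounding both exponentials by $1$ and reducing to $D^{-1}\int_0^\infty z\,\nu(\df z)<\infty$ --- is precisely the paper's. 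Finally, your caveat about the $D=0$ branch is a genuine and correct observation that the paper's ``the one for $D=0$ is similarly proved'' glosses over: there $\wt{f}_q(x)\le c^{-1}\ol{\nu}(x)$ is sharp near the origin, and under infinite activity $\ol{\nu}(0+)=\infty$, so uniform boundedness fails as stated; as you note, the boundedness is only invoked downstream in the $D>0$ case (the proof of Lemma~\ref{lem:} explicitly restricts to $D>0$), so nothing in the paper breaks, but the lemma's statement should be qualified to $D>0$ or to finite activity when $D=0$.
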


\begin{proof}
We prove only the case where $D>0$, and the one for $D=0$ is similarly proved. 

First, we see from Lemma 3.2 in Feng and Shimizu \cite{fs13} that 
\[
p:= \int_0^\infty \wt{f}_q(z)\,\df z \in (0,1), 
\]
under the net profit condition NPC. Therefore $f_q:=p^{-1}\wt{f}_q$ is well-defined. 

Moreover, since  $e^{-\g(z-y)}<1$ for $y<z<\infty$ and $e^{-\beta(x-y)}\le 1$ for $0<x\le y$, 
it follows that $f_q$ is uniformly bounded as follows: for any $x\in \R$, 
\begin{align*}
\wt{f}_{q}(x)&=D^{-1}\int_{0}^{x}\df y\int_{y}^{\infty}e^{-\b(x-y)}e^{-\g(z-y)}\nu(\df z) \\
&\le D^{-1}\int_{0}^{x}\df y\int_{y}^{\infty}\nu(\df z) \\
&\le D^{-1}\int_{0}^{\infty}z\nu(\df z) < \infty. 
\end{align*}
\end{proof}

\begin{lemma}\label{lem:FqGq_int}
For $F_{q}$ and $G_{q}$, their tail functions satisfy that  $\ol{F}_{q},\ \ol{G}_{q}\in L^s(\R_{+})$ for any $s\ge 1$.
\end{lemma}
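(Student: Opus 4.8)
The plan is to show that the tail functions $\ol{F}_q$ and $\ol{G}_q$ decay fast enough at infinity to be $s$-integrable for every $s\ge 1$, while being bounded (hence locally integrable) near the origin. Since both are monotone decreasing, bounded by $1$, and supported on $\R_+$, the only issue is the tail behaviour as $x\to\infty$: once we show $\ol{F}_q(x)$ decays, say, at least like an integrable power or exponentially, the bound $\ol{F}_q^s \le \ol{F}_q$ for $s\ge 1$ (valid because $0\le \ol{F}_q\le 1$) reduces every $L^s$ statement to the $L^1$ statement. So it suffices to prove $\ol{F}_q,\ol{G}_q\in L^1(\R_+)$.

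First I would handle $\ol{F}_q$. By definition $\ol{F}_q(x)=\int_x^\infty f_q(z)\,\df z$, so by Tonelli's theorem $\int_0^\infty \ol{F}_q(x)\,\df x = \int_0^\infty z\, f_q(z)\,\df z$, which is finite precisely when $F_q$ has a finite first moment. Thus the core estimate is to bound $\int_0^\infty z\,\wt{f}_q(z)\,\df z$. Using the explicit form of $\wt{f}_q$ for $D>0$ and Fubini (exactly as in the proof of Lemma \ref{lem:fq}), I would interchange the order of integration and carry out the inner integrals over the exponential kernels $e^{-\b(x-y)}$ and $e^{-\g(z-y)}$; these produce finitely many factors that are polynomially controlled, leaving a bound in terms of moments of $\nu$. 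I expect the upshot to be that $\int z\,\wt f_q(z)\,\df z \lesssim \int_0^\infty z^2\,\nu(\df z)$ (or a comparable low-order moment), finite under NPC together with the standing integrability hypotheses on $\nu$; the $D=0$ case is simpler and analogous.

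For $\ol{G}_q$ I would exploit the compound geometric structure rather than computing directly. Since $G_q = \sum_{k\ge 0}(1-p)p^k F_q^{*k}$ with $p\in(0,1)$, the random variable $Z$ with law $G_q$ is a geometric sum $Z = \sum_{j=1}^N Y_j$ of i.i.d.\ copies $Y_j$ of the $F_q$-distributed jump, where $N$ is geometric. Then $\int_0^\infty \ol{G}_q(x)\,\df x = \E[Z] = \frac{p}{1-p}\,\E[Y_1] = \frac{p}{1-p}\int_0^\infty z\,f_q(z)\,\df z$, which is finite by the $\ol{F}_q$ computation. This gives $\ol{G}_q\in L^1(\R_+)$ immediately, and monotonicity with $0\le\ol G_q\le1$ upgrades this to $L^s$ for all $s\ge1$. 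Alternatively, one can read off finiteness of $\E[Z]$ directly from the defective renewal equation \eqref{DRE-Gq} by integrating both sides.

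The main obstacle is the moment computation for $\wt{f}_q$: verifying that the iterated integral defining $\int z\,\wt f_q(z)\,\df z$ converges requires controlling the double exponential kernel against the polynomial weight $z$ and matching it to an available moment of $\nu$. Everything else is soft — monotonicity and boundedness of the tails, the elementary inequality $t^s\le t$ for $t\in[0,1],\ s\ge1$, and the geometric-sum mean formula. Care is needed only to confirm that the moment of $\nu$ that appears is finite under the paper's standing assumptions (e.g.\ $\int_0^1 z\,\nu(\df z)<\infty$ together with whatever tail moment M[$k$] supplies), so that the whole argument rests on stated hypotheses.
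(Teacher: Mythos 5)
Your proposal is correct and follows essentially the same route as the paper: reduce to $L^1$ via $0\le\ol{F}_q,\ol{G}_q\le 1$, compute $\int_0^\infty\ol{F}_q(x)\,\df x=\int_0^\infty z f_q(z)\,\df z$ by Tonelli together with the explicit double-exponential kernel of $\wt{f}_q$, and deduce $\ol{G}_q\in L^1$ from the compound-geometric/DRE structure (the paper evaluates the Laplace transform of \eqref{DRE-Gq} at $s=0$, which amounts to the same thing as your Wald-identity computation $\E[Z]=\tfrac{p}{1-p}\E[Y_1]$). The one point to tighten is the moment you leave unverified: carrying out the inner integrals gives $\int_0^\infty z\,\wt{f}_q(z)\,\df z\le\b^{-2}\int_0^\infty z\,\nu(\df z)$, so only the first moment of $\nu$ is needed and it is finite under NPC alone --- no appeal to M[$k$] or to $\int_0^\infty z^2\,\nu(\df z)$ is required.
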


\begin{proof}
Since $0\le \ol{F}_{q}(x), \ol{G}_{q}(x)\le 1$ for any $x\in \R$, 
it suffices to show that $\ol{F}_{q},\ \ol{G}_{q}\in L^{1}(\R_{+})$. 

The Fubini theorem yields that 
\begin{align*}
\int_{0}^{\infty}\ol{F}_{q}(x)\,\df x&=\int_{0}^{\infty}\l(\int_{u}^{\infty}f_{q}(x)\df x\r)\,\df u =\int_{0}^{\infty}xf_{q}(x)\,\df x\\
&=\int_{0}^{\infty}\l(\int_{0}^{z}\,\df y\int_{y}^{\infty}xe^{-\b(x-y)}e^{-\g(z-y)}\,\df x\r)\,\nu(\df z)\\
&=\frac{1}{\b\g}\l(\frac{1}{\b}-\frac{1}{\g}\r)\int_{0}^{\infty}(1-e^{-\g z})\,\nu(\df z)
+\frac{1}{\b\g}\int_{0}^{\infty}z\,\nu(\df z)\\
&\le\b^{-2}\int_{0}^{\infty}z\,\nu(\df z)<\infty.
\end{align*}
Moreover, taking the Laplace transform of the DRE \eqref{DRE-Gq} for $G_q$, we have that 
\begin{align*}
\int_{0}^{\infty}e^{-sx}\ol{G}_{q}(x)\,\df x
=\frac{p\int_{0}^{\infty}e^{-sx}f_{q}(x)\,\df x}{1-p\int_{0}^{\infty}e^{-sx}\ol{F}_{q}(x)\,\df x}. 
\end{align*}
Substituting with $s=0$, we have $\ol{G}_{q}\in L^{1}(\R_{+})$ since $f_{q},\ol{F}_{q}\in L^{1}(\R_{+})$. 
\end{proof}

\subsection{On functions $H_p$, $H_{\a,k}^f$ and $H_{\a,k}^F$}

\begin{lemma}\label{lem:pH}
For each $z\in \R_{+}$ and $m=0,\ 1$,  the following inequalities hold true:  
\begin{align*}
\sup_{\th\in \ol{\Th}}|\p_{D}^{m}H_{p}(z;\th) | \lesssim z,\quad
\sup_{\th\in \ol{\Th}}|\p_{D}^{m}H^{f}_{\a,k}(z;\th)| \lesssim z,\quad
\sup_{\th\in \ol{\Th}}|\p_{D}^{m}H^{F}_{\a,k}(z;\th)| \lesssim z.
\end{align*} 

Moreover, it follows that 
\begin{align*}
\sup_{\th\in \ol{\Th}}|\p_{\g}H_{p}(z;\th)| \lesssim z,\quad
\sup_{\th\in \ol{\Th}}|\p_{\g}H^{f}_{\a,k}(z;\th)| \lesssim z,\quad
\sup_{\th\in \ol{\Th}}|\p_{\g}H^{F}_{\a,k}(z;\th)| \lesssim z + z^2
\end{align*}
\end{lemma}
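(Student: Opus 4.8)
The plan is to treat the three functions through a single template and reduce every bound to two elementary integral estimates. In the case $D>0$, write
\[
H_g(z;\th):=D^{-1}\int_0^z\df y\int_y^\infty e^{-\b(x-y)}e^{-\g(z-x)}g(x)\,\df x,
\]
so that $H_p$, $H^f_{\a,k}$, $H^F_{\a,k}$ are recovered by taking $g\equiv 1$, $g=\vp_{\a,k}$, and $g=\Psi_{\a,k}(\cdot;0)$ respectively. The structural starting point is that, on the compact set $\ol\Th=\ol\Th_1\times\ol\Th_2$, the quantities $D$, $\g$ and the gap $\b-\g=cD^{-1}$ are all bounded and bounded away from $0$; in particular $\b>\g$ with a uniform positive gap, which is exactly what makes the inner integral over $x\in[y,\infty)$ converge despite $e^{-\g(z-x)}$ growing for $x>z$.

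First I would record the inner integrals: substituting $t=x-y$ gives, for each $m\in\N_0$,
\[
\int_y^\infty (x-y)^m e^{-\b(x-y)}e^{-\g(z-x)}\,\df x=\frac{m!}{(\b-\g)^{m+1}}\,e^{-\g(z-y)}=\frac{m!\,D^{m+1}}{c^{m+1}}\,e^{-\g(z-y)},
\]
with a prefactor bounded uniformly over $\ol\Th$. Consequently every quantity collapses, after the outer substitution $s=z-y$, to finitely many integrals $\int_0^z s^j e^{-\g s}\,\df s$, for which
\[
\int_0^z s^j e^{-\g s}\,\df s\le\min\Bigl\{\tfrac{z^{j+1}}{j+1},\ \tfrac{j!}{\g^{j+1}}\Bigr\}\lesssim z,
\]
uniformly over $\g\in\ol\Th_2$ (the first bound handles small $z$, the second, finite because $\g$ is bounded away from $0$, handles large $z$). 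Combined with $|g|\le 1$, $|\vp_{\a,k}|\le\sqrt{2\a}$ from \eqref{vp-bdd}, and the boundedness of $\Psi_{\a,k}(\cdot;0)$, this already gives the $m=0$ estimates $|H_g(z;\th)|\le\|g\|_\infty H_p(z;\th)\lesssim z$.

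Next I would differentiate under the integral sign, legitimate by dominated convergence using the uniform exponential domination from $\b-\g\ge c/\max\ol\Th_1$. For $\p_D$, using $\p_D(D^{-1})=-D^{-2}$ and $\p_D\b=-cD^{-2}$ one gets $\p_D H_g=-D^{-1}H_g+cD^{-3}\int_0^z\df y\int_y^\infty(x-y)e^{-\b(x-y)}e^{-\g(z-x)}g(x)\,\df x$; the first term is $\lesssim z$ by the $m=0$ case, and the second reduces via the $m=1$ inner integral to $\tfrac{1}{cD}\int_0^z e^{-\g s}\,\df s\lesssim z$. For $\p_\g$ the key simplification is that the two $\g$-dependent exponents contribute $-(x-y)-(z-x)=-(z-y)$, with the $x$'s cancelling, so $\p_\g H_g=-D^{-1}\int_0^z(z-y)\,\df y\int_y^\infty e^{-\b(x-y)}e^{-\g(z-x)}g(x)\,\df x$; bounding $g$ and using the two estimates gives $\lesssim\int_0^z s e^{-\g s}\,\df s\lesssim z$ for $H_p$ and $H^f_{\a,k}$. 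For $\p_\g H^F_{\a,k}$ the same computation applies, but bounding the antiderivative by its trivial linear envelope $|\Psi_{\a,k}(x;0)|\le\sqrt{2\a}\,x$ inserts an extra factor of $x$ in the inner integral, producing a term $\int_0^z s^2 e^{-\g s}\,\df s$ beside $\int_0^z s e^{-\g s}\,\df s$ and hence the stated $\lesssim z+z^2$. The regime $D=0$ is simpler, since there $H_p$, $H^f_{\a,k}$, $H^F_{\a,k}$ are single integrals $c^{-1}\int_0^z e^{-\g(z-x)}g(x)\,\df x$ and the $\p_\g$ bounds follow directly from $\int_0^z s^j e^{-\g s}\,\df s\lesssim z$.

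The main obstacle is less the algebra than the uniformity: one must keep every estimate uniform in $\th\in\ol\Th$ and in $k$, which forces the two structural facts above — that $\b-\g=cD^{-1}$ stays bounded away from $0$ and $\infty$ (controlling the noncompact inner integral and justifying differentiation under it), and that $\g$ is bounded away from $0$ (which is what keeps $\int_0^z s^j e^{-\g s}\,\df s$ at order $z$ rather than $z^{j+1}$; at $\g=0$ one genuinely loses a power). The slight asymmetry in the final line — $z+z^2$ for $\p_\g H^F_{\a,k}$ versus $z$ for the others — is precisely the price of the convenient linear envelope for $\Psi_{\a,k}(\cdot;0)$; retaining its true uniform boundedness would give $\lesssim z$ here as well, so the recorded bound is a safe over-estimate that still suffices for the later application under M[4].
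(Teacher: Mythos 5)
Your proof is correct and follows essentially the same route as the paper's: explicit evaluation of the inner exponential integral, the sup-bounds $|\vp_{\a,k}|\le\sqrt{2\a}$ and $|\Psi_{\a,k}(x;0)|\le\sqrt{2\a}\,x$, and the fact that $D$ and $\g$ are bounded away from $0$ and $\infty$ on $\ol{\Th}$ (the latter being exactly what keeps $\int_0^z s^j e^{-\g s}\,\df s\lesssim z$ for $j\ge 1$); indeed you supply more detail than the paper, which only writes out the $m=0$ bounds and dismisses the derivative cases as ``similar.'' The one thing to note is that the paper's own computations (and the Fubini derivation in Lemma \ref{lem:paf}) use the kernel $e^{-\b(x-y)}e^{-\g(z-y)}$ rather than the $e^{-\g(z-x)}$ appearing in the displayed definition that you took literally; this is immaterial, since your argument goes through verbatim for either version with $(\b-\g)^{-(m+1)}$ replaced by $\b^{-(m+1)}$ in the inner integral and the $\p_\g$ factor $-(z-y)$ replaced by $-(x-y)-(z-y)$, which your two elementary integral estimates handle identically.
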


\begin{proof}
We shall show the case where $D>0$. The case where $D=0$ is similarly proved. 

Since $\Th=\Th_1\times \Th_2$, each $\Th_i$ is open and bounded, we can assume that  
\[
\Th=(\eta_{1},\eta_{1}^{-1})\times(\eta_{2},\eta_{2}^{-1})
\]
 for $\eta_{1},\eta_{2}>0$ small enough without loss of generality. 
Then we have that 
\begin{align*}
\l|H_{p}(z;\th)\r|
&= D^{-1}\int_{0}^{z}\,\df y\int_{y}^{\infty}e^{-\b(x-y)}e^{-\g(z-y)}\,\df x \\
&=\frac{1}{\b D\g}\l(1-e^{-\g z}\r) \le\frac{1}{(c+\eta_{1}\eta_{2})\eta_{1}}\l(1-e^{-\eta_{1}^{-1} z}\r) \\
&\lesssim z, 
\end{align*}
uniformly in $\th \in \ol{\Th}$. Then, due to \eqref{vp-bdd}, we see that $|H^{f}_{\a,k}(z;\th)|\le\sqrt{2\a}|H_{p}(z;\th)|$, which yields that 
$\sup_{\th\in \ol{\Th}}\l|H^{f}_{\a,k}(z;\th)\r|\lesssim z$. 
Moreover, since it also holds that $\sup_{k\in\mathbb{N}_{0}}|\Phi_{\a,k}(x;0)|\le \sqrt{2\a}x$, we have 
\begin{align*}
\l|H^{F}_{\a,k}(z;\th)\r|
&\le\sqrt{2\a}D^{-1}\int_{0}^{z}\,\df y\int_{y}^{\infty}xe^{-\b(x-y)}e^{-\g(z-y)}\,\df x \\
&=\frac{\sqrt{2\a}}{\b D\g}\l(z-\l(\frac{1}{\g}-\frac{1}{\b}\r)(1-e^{-\g z})\r) \\
&\le\frac{\sqrt{2\a}}{(c+\eta_{1}\eta_{2})\eta_{1}}z\lesssim z.
\end{align*}
Other proofs can be done similarly, so we omit them and end the proof. 
\end{proof}

\begin{lemma}
\label{lem:gH}
For each $z\in \R_+$, $m=0, 1$ and $\k>0$, the following inequalities hold true:  
\begin{align*}
\sup_{\th\in \ol{\Th}}\l|\p_{\g}^{m}H_{p}(z;\th+\k)-\p_{\g}^{m}H_{p}(z;\th)\r|&\lesssim |\k| z,\\
\sup_{\th\in \ol{\Th}}\l|\p_{\g}^{m}H^{f}_{\a,k}(z;\th+\k)-\p_{\g}^{m}H^{f}_{\a,k}(z;\th)\r|&\lesssim |\k|z,\\
\sup_{\th\in \ol{\Th}}\l|H^{F}_{\a,k}(z;\th+\k)-H^{F}_{\a,k}(z;\th)\r|&\lesssim |\k|z,\\
\sup_{\th\in \ol{\Th}}\l|\p_{\g}H^{F}_{\a,k}(z;\th+\k)-\p_{\g}H^{F}_{\a,k}(z;\th)\r| &\lesssim |\k|(z^{2}+z).
\end{align*}
\end{lemma}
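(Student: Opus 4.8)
The plan is to derive every bound from the mean value theorem in the parameter $\th=(D,\g)$, thereby reducing each stated Lipschitz-type estimate to a uniform bound on one further $\th$-derivative of the integrand. Concretely, for a generic $C^1$ function $G(z;\cdot)$ among $\p_\g^m H_p$, $\p_\g^m H^f_{\a,k}$, $H^F_{\a,k}$ and $\p_\g H^F_{\a,k}$, I write
\[
G(z;\th+\k)-G(z;\th)=\int_0^1 \n_\th G(z;\th+u\k)\cdot\k\,\df u,
\]
so that $|G(z;\th+\k)-G(z;\th)|\le|\k|\sup_{0\le u\le 1}|\n_\th G(z;\th+u\k)|$. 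As in Lemma \ref{lem:pH} I treat only $D>0$ (the case $D=0$ being analogous) and work on a fixed bounded neighbourhood of $\ol\Th$ contained in the admissible region $\{\g>0,\ \b=cD^{-1}+\g>0\}$; the net profit condition $c>0$ keeps $c+D\g\ge c>0$, so all denominators $\b,\g,D$ stay bounded away from $0$ and $\infty$ along the whole segment $[\th,\th+\k]$ and the implied constants are uniform. (For the applications it suffices that $\k$ be small, since the perturbation will be $\wh{\th}_n-\th_0\to 0$.)

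It then remains to bound the gradients $\n_\th G$, which I do by differentiating under the integral sign exactly as in Lemma \ref{lem:pH}; differentiation under the integral is justified by dominated convergence, the dominating functions being integrable because $e^{-\b(x-y)},e^{-\g(z-y)}\le 1$ with $\b,\g$ bounded below. Each application of $\p_\g$ brings down a factor $-(x-y)-(z-y)$ (recall $\p_\g\b=1$), while $\p_D$ acts on the prefactor $D^{-1}$ and on $\b$ through $\p_D\b=-cD^{-2}$, again producing only the polynomial factor $-(x-y)$. The three basic ingredients are then that the exponentials are $\le 1$, that $|\vp_{\a,k}(x)|\le\sqrt{2\a}$ by \eqref{vp-bdd}, and that $|\Psi_{\a,k}(x;0)|\le\sqrt{2\a}\,x$. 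Powers of $z$ are extracted using $y\le z$ on $[0,z]$, the elementary bound $\int_0^z e^{-\g(z-y)}\,\df y\le z$, and the finiteness of the inner moments $\int_y^\infty (x-y)^j e^{-\b(x-y)}\,\df x=j!\,\b^{-(j+1)}$ and $\int_y^\infty (x-y)^j x\,e^{-\b(x-y)}\,\df x\lesssim 1+y$.

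Counting powers in this way gives the four estimates. For $H_p$ and $H^f_{\a,k}$ there is no $\Psi$-factor, so one further $\g$- or $D$-derivative still yields a single power of $z$, whence $|\n_\th\p_\g^m H_p|,|\n_\th\p_\g^m H^f_{\a,k}|\lesssim z$ and the first two bounds follow with constant $|\k|\,z$. For $H^F_{\a,k}$ the factor $\Psi_{\a,k}(x;0)$ contributes, through the inner $x$-integral, a factor $\lesssim 1+y\lesssim 1+z$; at the level of $\n_\th H^F_{\a,k}$ this combines with $\int_0^z e^{-\g(z-y)}\,\df y\le z$ to give $\lesssim z$, yielding the third bound, while for $\p_\g H^F_{\a,k}$ the same inner factor paired with one outer-integration power gives $(1+z)\cdot z\lesssim z^2+z$, yielding the last bound. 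The main obstacle is precisely this power bookkeeping for $\p_\g H^F_{\a,k}$: the mean value step requires $|\p_\g^2 H^F_{\a,k}|,|\p_D\p_\g H^F_{\a,k}|\lesssim z^2+z$, so one must verify that the repeated differentiation does not generate a third power of $z$. This is where the suppression of the factors $(z-y)$ produced by differentiating $e^{-\g(z-y)}$ is used: integrated against $e^{-\g(z-y)}$ they contribute only bounded moments $\int_0^\infty v^j e^{-\g v}\,\df v\lesssim 1$, so that only the linear $\Psi$-growth and a single outer-integration factor survive. The remaining care is the uniformity noted above, ensuring these gradient bounds hold throughout the segment $[\th,\th+\k]$.
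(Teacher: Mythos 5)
Your proof is correct, but it is organized differently from the paper's. The paper bounds each difference directly: it splits $H(z;\th+\k)-H(z;\th)$ by the triangle inequality into a prefactor-difference term ($|\tfrac{1}{D+\k_2}-\tfrac1D|$) and an exponential-difference term, and then evaluates the resulting integrals in closed form; notably, its displayed computation tracks only the perturbation in $D$, and it carries this out only for $H_p$ and $H^F_{\a,k}$ with $m=0$, declaring the remaining cases ``similar.'' You instead apply the mean value theorem in $\th$ to each of $\p_\g^m H_p$, $\p_\g^m H^f_{\a,k}$, $H^F_{\a,k}$, $\p_\g H^F_{\a,k}$, reducing all four inequalities to uniform bounds on one further $\th$-derivative of the integrand, obtained by differentiating under the integral and power counting. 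Your route is more systematic: it treats the $D$- and $\g$-perturbations on the same footing (which the paper's displayed argument does not) and disposes of all four estimates, including the $m=1$ cases, by a single mechanism; the paper's route buys explicit constants and avoids second-order $\th$-derivatives. One point deserves emphasis in your write-up: for the third inequality your reduction needs $\sup_{\th}|\p_\g H^F_{\a,k}(z;\th)|\lesssim z$, which is \emph{sharper} than the bound $z+z^2$ recorded in Lemma \ref{lem:pH}. It does hold, but not via the crude ``bounded moments'' replacement $(z-y)e^{-\g(z-y)}\lesssim 1$ (that only yields $\int_0^z(1+y)\,\df y\lesssim z+z^2$); one must keep the exponential and use $\int_0^z w^j e^{-\g w}(1+z-w)\,\df w\le(1+z)\min(z^{j+1},\g^{-(j+1)})\lesssim z$, exactly the refinement you invoke for the fourth bound. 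Finally, your caveat that the segment $[\th,\th+\k]$ must remain in the admissible region (so that $\b,\g,D$ stay bounded away from $0$ and $\infty$) is a real restriction to small $\k$, but the paper's own computation implicitly assumes the same, and it suffices for the application to $\wh{\th}_n\toP\th_0$.
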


\begin{proof}
We shall show the case where $D>0$. The case where $D=0$ is similarly proved. 
As in the previous proof, we assume that  $\Th=(\eta_{1},\eta_{1}^{-1})\times(\eta_{2},\eta_{2}^{-1})$
 for $\eta_{1},\eta_{2}>0$ small enough without loss of generality, and put $\k:=(\k_{1},\k_{2})^{\top}$. 

Denote by 
\[
h_{\th}(x,y,z):=e^{-\g(x-y)} e^{-\frac{c}{D}(x-y)}e^{-\g(z-y)}
\]
Note that, it follows for $y<x<\infty$, $0<y\le z$ that 
\begin{align}
\l|e^{-\frac{c}{D+\k_{2}}(x-y)}-e^{-\frac{c}{D}(x-y)}\r| e^{-\g(z-y)} 
\ge \l|h_{\th+\k}(x,y,z)-h_{\th}(x,y,z)\r|  \label{eq:exp_ineq}
\end{align}
Using this inequality, we have 
\begin{align*}
|H_{p}(z;\th+\k)-H_{p}(z;\th)|&=\l|\int_0^{z}\,\df y\int_{y}^\infty \l(\frac{1}{D+\k_{2}}h_{\th+\k}(x,y,z)-\frac{1}{D}h_{\th}(x,y,z)\r)\,\df x\r|\\
&\le\l|\int_0^{z}\,\df y\int_{y}^\infty  \l(\frac{1}{D+\k_{2}}-\frac{1}{D}\r)h_{\th+\k}(x,y,z)\,\df x\r| \\
&\qquad +\l|\int_0^{z}\,\df y\int_{y}^\infty  \frac{1}{D}(h_{\th+\k}(x,y,z)-h_{\th}(x,y,z))\,\df x\r| \\ 
&\le\int_0^{z}\,\df y\int_{y}^\infty \l(\frac{1}{D}-\frac{1}{D+\k_{2}}\r)e^{-\frac{c}{D+\k_{2}}(x-y)}e^{-\g(z-y)}\,\df x \\
&\qquad +\int_0^{z}\,\df y\int_{y}^\infty \frac{1}{D}\l(e^{-\frac{c}{D+\k_{2}}(x-y)}-e^{-\frac{c}{D}(x-y)}\r)e^{-\g(z-y)}\,\df x\\
&=\frac{2}{c\g D}\l(1-e^{-\g z}\r)\k_{2} \\
&\le\frac{2}{c\eta_{1}\eta_{2}}\l(1-e^{-\eta_{1}^{-1} z}\r)|\k| \lesssim |\k|z.
\end{align*}
uniformly in $\th \in \ol{\Th}$. 

As for $H_{\a,k}^f$, since $\l|H^{f}_{\a,k}(z;\th+\k)-H^{f}_{\a,k}(z;\th)\r|\le\sqrt{2\a}|H_{p}(z;\th+\k)-H_{p}(z;\th)|$ by \eqref{vp-bdd}, we obtain that 
\begin{align*}
\sup_{\th\in \ol{\Th}}\l|H^{f}_{\a,k}(z;\th+\k)-H^{f}_{\a,k}(z;\th)\r|\lesssim z. 
\end{align*}
Similarly, from \eqref{vp-bdd} and \eqref{eq:exp_ineq}, we have 
\begin{align*}
\big|H^{F}_{\a,k}(z;\th+\k) &- H^{F}_{\a,k}(z;\th)\big|\\
&\le\sqrt{2\a}\l|\int_0^{z}\,\df y\int_{y}^\infty 
x\l(\frac{1}{D+\k_{2}}h_{\th+\k}(x,y,z)-\frac{1}{D}h_{\th}(x,y,z)\r)\,\df x\r|\\
&\le\sqrt{2\a}\l|\int_0^{z}\,\df y\int_{y}^\infty 
\l(\frac{1}{D+\k_{2}}-\frac{1}{D}\r)xh_{\th+\k}(x,y,z)\,\df x\r|\\
&\qquad +{\sqrt{2\a}}\l|\int_0^{z}\,\df y\int_{y}^\infty 
\frac{1}{D}(xh_{\th+\k}(x,y,z)-xh_{\th}(x,y,z))\,\df x\r|\\
&\le\sqrt{2\a}\int_0^{z}\,\df y\int_{y}^\infty 
\l(\frac{1}{D}-\frac{1}{D+\k_{2}}\r)x e^{-\frac{c}{D+\k_{2}}(x-y)}e^{-\g (z-y)}\,\df x\\
&\qquad +\frac{\sqrt{2\a}}{c}\int_0^{z}\,\df y\int_{y}^\infty 
\frac{1}{D}\l(xe^{-\frac{c}{D+\k_{2}}(x-y)}-x e^{-\frac{c}{D}(x-y)}\r) e^{-\g (z-y)}\,\df x\\
&=\frac{\sqrt{2\a}}{c\g D}\l(2z+\l(\frac{3D+2\k_{2}}{c}-\frac{2}{\g}\r)\l(1-e^{-\g z}\r)\r)\k_{2} \\
&\le\frac{\sqrt{2\a}}{c\eta_{2}}\l(\frac{2}{\eta_{1}}z+\l(\frac{5}{c\eta_{2}}-2\eta_{1}\r)\frac{1}{\eta_{1}}\l(1-e^{-\eta_{1}^{-1} z}\r)\r)|\k| \\
& \lesssim |\k|z, 
\end{align*}
uniformly in $\th \in \ol{\Th}$. 

Other proofs can be done similarly, so we omit them and end the proof. 
\end{proof}

\subsection{On functions $P$, $Q_{\a,k}$, $P^{*}$ and $Q^{*}_{\a,k}$}

\begin{lemma}
\label{lem:PQ_C1}
The following statements (a) and (b) are true: 
\begin{itemize}
\item[(a)] For each $x\in \R$, the mappings $(p,\g,D)\mapsto P(x;p,\g,D)$ and $(p,\g,D)\mapsto P^{*}(x;p,\g,D)$ are of $C^1((0,1)\times \Th)$. 
\item[(b)] For each $x\in \R$, the mappings $(p,\g,D)\mapsto Q_{\a,K}(x;p,\g,D)$ and $(p,\g,D)\mapsto Q^{*}_{\a,K}(x;p,\g,D)$ are of $C^1((0,1)\times \Th)$. 
\end{itemize}
\end{lemma}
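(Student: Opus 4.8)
The plan is to observe that each of $P$, $Q_{\a,k}$, $P^*$ and $Q^*_{\a,k}$ is assembled by finitely many additions, multiplications, compositions, and quotients from a short list of elementary building blocks, and then to verify that (i) each building block is $C^1$ (in fact $C^\infty$) in $(p,\g,D)$ on the relevant region and (ii) every denominator that occurs is non-vanishing there. The conclusion then follows from the standard fact that sums, products, compositions, and quotients with non-vanishing denominator of $C^1$ maps are $C^1$. Throughout I treat the branches $D>0$ and $D=0$ separately; in the latter $D$ is held fixed and the maps depend on $(p,\g)$ only.

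First I would reduce the denominators. In the case $D>0$, using $\b=cD^{-1}+\g$ one rewrites $D(1-p)(\b+\g)=(1-p)(c+2\g D)$; since $p\in(0,1)$, $c>0$ (by NPC) and $\g,D\ge 0$, this is strictly positive on $(0,1)\times\Th$, so its reciprocal is $C^\infty$ there. In the case $D=0$ the denominator is $c(1-p)>0$, again yielding a $C^\infty$ reciprocal. The map $(\g,D)\mapsto\b=cD^{-1}+\g$ is $C^\infty$ on $\{D>0\}$, and $e^{\g x}$, $e^{-\b x}$ are $C^\infty$ as compositions.

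The one building block requiring a genuine argument is $\Psi_{\a,k}(x;b)=\int_0^x e^{b(x-z)}\vp_{\a,k}(z)\,\df z$ regarded as a function of the exponent $b$ (the values $b=\g$ and $b=-\b$ enter the formulas). Here I would differentiate under the integral sign: the integrand $e^{b(x-z)}\vp_{\a,k}(z)$ and its partial $(x-z)e^{b(x-z)}\vp_{\a,k}(z)$ are continuous on the compact rectangle $(z,b)\in[0,x]\times J$ for any bounded interval $J$, hence uniformly bounded there, which legitimizes differentiation under the integral and shows $b\mapsto\Psi_{\a,k}(x;b)$ is $C^1$ (indeed $C^\infty$). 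Composing with the $C^\infty$ maps $\g\mapsto\g$ and $(\g,D)\mapsto-\b$ preserves $C^1$ regularity, and assembling via the sum/product/quotient rules gives $P\in C^1$ and $Q_{\a,k}\in C^1$.

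For $P^*$ and $Q^*_{\a,k}$ the same blocks recur, so the only new feature is the apparent singularities from the factors $\g^{-1}$ and $\b^{-1}$ in $P^*$. Since $\b>0$ whenever $D>0$, the factor $\b^{-1}$ is harmless; the factor $\g^{-1}$ is only an apparent singularity, because $u\mapsto(1-e^{ux})/u$ extends to an entire function of $u$ (with power series $-x\sum_{m\ge0}(ux)^m/(m+1)!$), so $\g\mapsto\g^{-1}(1-e^{\g x})$ is $C^\infty$ even across $\g=0$. With this observation $P^*$ and $Q^*_{\a,k}$ are again quotients of $C^1$ functions with non-vanishing denominators. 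The main obstacle is therefore not any single hard estimate but the bookkeeping of checking that no denominator degenerates and that the $\g^{-1},\b^{-1}$ terms are removable; once those two points are settled, the differentiation under the integral sign for $\Psi_{\a,k}$ is the only nontrivial step and the overall regularity is mechanical.
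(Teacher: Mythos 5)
Your proposal is correct and follows essentially the same route as the paper: part (a) is read off from the explicit formulas (you add the useful simplification $D(1-p)(\b+\g)=(1-p)(c+2\g D)>0$ and the removable-singularity remark for $\g^{-1}(1-e^{\g x})$ in $P^*$), and part (b) reduces, exactly as in the paper, to showing $b\mapsto\Psi_{\a,k}(x;b)$ is $C^1$ by differentiating under the integral sign with a dominated-convergence justification on the compact interval $[0,x]$.
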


\begin{proof}
Since the statement (a) is clear from the concrete form of $P$ and $P^*$. 
For the proof of (b), it suffices to show that the function $b\mapsto\Psi_{\a,k}(x;b)$ is of $C^1(\R)$.  
For $M>0$ large enough, it follows for $z < x$ that 
\begin{align*}
\sup_{|b|< M}\l|\p_{b}\l(be^{b(x-z)}\varphi_{\a,k}(z)\r)\r|
&=\sup_{|b|< M}\l|e^{b(x-z)}\varphi_{\a,k}(z)+b(x-z)e^{b(x-z)}\varphi_{\a,k}(z)\r|\\
&\le\sqrt{2\a}\sup_{|b|< M}(1+|b|(x-z))e^{b(x-z)} \\
&<\sqrt{2\a}(1+Mx)e^{Mx}, 
\end{align*}
which is integrable on $(0,x]$ with respect to $\df z$. Hence the Lebesgue convergence theorem yields that  $b\mapsto\Psi_{\a,k}(x;b)$ is of $C^1(\R)$.  

\end{proof}

\begin{lemma}
\label{lem:PQ_consis}
Under the assumption S1, S2 and M[2], the following (a)--(d) hold true: 
\begin{itemize}
\item[(a)] $\dis \sup_{x\in \R_+}\l|P(x;\wh{p}_{n},0,\wh{D}_{n})-P(x;p_0,0,D_0)\r|\toP 0$; 
\item[(b)] $\dis \sup_{x\in \R_+}\l|Q_{\a,k}(x;\wh{p}_{n},0,\wh{D}_{n})-Q_{\a,k}(x;p_0,0,D_0)\r|\toP 0$; 
\item[(c)] $\dis \sup_{x\in \R_+}\l|P^*(x;\wh{p}_{n},0,\wh{D}_{n})-P^*(x;p_0,0,D_0)\r|\toP 0$; 
\item[(d)] $\dis \sup_{x\in \R_+}\l|Q^*_{\a,k}(x;\wh{p}_{n},0,\wh{D}_{n})-Q^*_{\a,k}(x;p_0,0,D_0)\r|\toP 0$. 
\end{itemize}
\end{lemma}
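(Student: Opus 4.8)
The plan is to read each of (a)--(d) as a continuous-mapping statement for the supremum norm on $\R_+$. Because $q=0$ forces $\wh{\g}_n=\g_0=0$ \emph{exactly} through the indicator $\I_{\{q>0\}}$ in the definition of $\wh{\g}_n$, only $p$ and $D$ carry estimation error, and Theorem \ref{thm:consist} together with Lemma \ref{lem:D-est} give $\wh{p}_n\toP p_0$ and $\wh{D}_n\toP D_0$. Hence it suffices to prove that each of the four maps $(p,D)\mapsto[\,\cdot\,](\,\cdot\,;p,0,D)$ is continuous at $(p_0,D_0)$ in the topology of uniform convergence on $\R_+$; composing with consistency and passing to a.s.-convergent subsequences as in Corollary \ref{cor:nu-est} then yields each stated convergence in probability. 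The first step is to record the $\g=0$ forms: with $\b=c/D$, for $D>0$ one has $P(x;p,0,D)=(1-e^{-\b x})/(c(1-p))$, $Q_{\a,k}(x;p,0,D)=\Psi_{\a,k}(x;-\b)/(D(1-p))$ and $Q^{*}_{\a,k}(x;p,0,D)=(\Psi_{\a,k}(x;0)-\Psi_{\a,k}(x;-\b))/(c(1-p))$, the $D=0$ versions being simpler.

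For (a), (b) and (d) the underlying function is \emph{uniformly bounded} on $\R_+$: indeed $|e^{-\b x}|\le 1$, $|\Psi_{\a,k}(x;-\b)|\le\sqrt{2\a}/\b$ by \eqref{vp-bdd}, and $\sup_x|\Psi_{\a,k}(x;0)|<\infty$ because $\int_0^\infty\vp_{\a,k}$ converges. The parameter dependence of the $x$-profile sits only in $e^{-\b x}$ and $\Psi_{\a,k}(x;-\b)$, so the whole matter reduces to two uniform estimates. From $|e^{-ax}-e^{-bx}|=|\int_a^b x e^{-sx}\,ds|\le|a-b|\sup_{u\ge0}ue^{-s_{\min}u}=|a-b|/(e\,s_{\min})$ I get $\sup_{x\in\R_+}|e^{-\wh{\b}_n x}-e^{-\b_0 x}|\lesssim|\wh{\b}_n-\b_0|$, and the integral bound $\sup_{x\in\R_+}|\Psi_{\a,k}(x;-\wh{\b}_n)-\Psi_{\a,k}(x;-\b_0)|\le\sqrt{2\a}\int_0^\infty|e^{-\wh{\b}_n u}-e^{-\b_0 u}|\,du=\sqrt{2\a}\,|1/\b_0-1/\wh{\b}_n|$ controls the $\Psi$-differences; here I use that on $\ol{\Th}_1$ the map $D\mapsto\b=c/D$ is bounded and bounded away from $0$, so $\wh{\b}_n=c/\wh{D}_n\toP\b_0$. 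Combining these with the continuity of the scalar prefactors $1/(c(1-p))$ and $1/(D(1-p))$ at $(p_0,D_0)$ (valid since $p_0\in(0,1)$ and $D_0>0$) and the triangle inequality delivers the $\sup_{x\in\R_+}\toP 0$ conclusions of (a), (b) and (d).

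The hard part is (c), where this scheme genuinely strains over all of $\R_+$. Integrating $P$ produces a term linear in $x$: for $D>0$, $P^{*}(x;p,0,D)=\big(x-\tfrac{D}{c}(1-e^{-\b x})\big)/(c(1-p))$ up to the overall sign recorded in \eqref{Z-approx}, and $P^{*}(x;p,0,0)=\pm x/(c(1-p))$ when $D=0$. Consequently $P^{*}(\,\cdot\,;\wh{p}_n,0,\wh{D}_n)-P^{*}(\,\cdot\,;p_0,0,D_0)$ carries the linear term $\tfrac{x}{c}\big(\tfrac{1}{1-\wh{p}_n}-\tfrac{1}{1-p_0}\big)$, whose supremum over $\R_+$ is infinite whenever $\wh{p}_n\ne p_0$. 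Thus, in contrast with (a), (b), (d), the map $p\mapsto P^{*}(\,\cdot\,;p,0,D)$ is \emph{not} continuous in the sup-norm on $\R_+$, and this unbounded linear growth is the essential obstruction: it cannot be absorbed by the rate of $\wh{p}_n-p_0\toP 0$ because $x$ ranges over the whole half-line. I expect this to be the step demanding the most care.

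The way I would reconcile (c) with its stated form is to isolate this linear part and treat the rest by the bounded machinery above. The remainder $-\tfrac{D}{c^2(1-p)}(1-e^{-\b x})$ is handled exactly as $P$ in (a), so the only genuinely delicate contribution is the linear monomial. On any compact $V\subset\R_+$ the factor $x$ is bounded and the very same continuity argument yields $\sup_{x\in V}|P^{*}(\,\cdot\,;\wh{p}_n,0,\wh{D}_n)-P^{*}(\,\cdot\,;p_0,0,D_0)|\toP0$, which is precisely the form invoked downstream in Theorem \ref{thm:Z}. A literal $\sup_{x\in\R_+}$ statement for (c) can hold only after the linear term is neutralised — for instance upon multiplication by the factor $q$, which vanishes in the regime $\g=0$ under consideration — so making the uniform-over-$\R_+$ wording of (c) precise requires either such a weighting or a restriction of $x$; this is the point at which the argument must be pinned down rather than inherited verbatim from (a), (b), (d).
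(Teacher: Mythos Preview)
Your treatment of (a), (b) and (d) is essentially the paper's own argument: the paper sketches (a) via the mean value theorem applied to $e^{-\b x}$ and the prefactor $1/(c(1-p))$, then dismisses (b)--(d) in one line as ``similar'' by invoking \eqref{vp-bdd}. Your explicit uniform bounds on $\sup_x|e^{-\wh\b_n x}-e^{-\b_0 x}|$ and $\sup_x|\Psi_{\a,k}(x;-\wh\b_n)-\Psi_{\a,k}(x;-\b_0)|$ carry out that same computation, only more carefully.

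Where you part company with the paper is on (c), and here you have uncovered a genuine defect that the paper's one-line ``similar'' conceals. Your computation is correct: at $\g=0$ the factor $\g^{-1}(1-e^{\g x})$ must be read as its limit $-x$, so $P^*(x;p,0,D)$ contains the linear term $-x/(c(1-p))$, and the difference $P^*(\,\cdot\,;\wh p_n,0,\wh D_n)-P^*(\,\cdot\,;p_0,0,D_0)$ therefore carries $-\tfrac{x}{c}\big(\tfrac{1}{1-\wh p_n}-\tfrac{1}{1-p_0}\big)$, whose supremum over $\R_+$ is infinite whenever $\wh p_n\ne p_0$. Item (c) as stated, with $\sup_{x\in\R_+}$, is thus not provable by the method used for (a); indeed it fails outright. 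Your proposed repair --- restrict to compact $V\subset\R_+$, or observe that the factor $q$ in $Z_K^{(q)}$ kills the whole $P^*$ term at $q=0$ --- is exactly what the sole downstream consumer, Theorem~\ref{thm:Z}, actually requires (its $q=0$ clause is already phrased on compacta). So you have not missed a trick: the lemma is over-stated, the paper's proof of (c) is not in fact ``similar'' to (a), and your diagnosis and remedy are both correct.
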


\begin{proof}
As for (a) with $D>0$, it follows from the mean value theorem that  
\begin{align*}
\sup_{x\in\R_+}&\l|P(x;\wh{p}_{n},0,\wh{D}_{n})-P(x;p_0,0,D_0)\r| \\
&\le \l|\frac{1}{c(1-\wh{p}_{n})}-\frac{1}{c(1-p_0)}\r|+\frac{1}{c(1-p_0)}\sup_{x\in\R_+}\l|e^{-c\wh{D}_{n}^{-1}x}-e^{-cD_0^{-1}x}\r| \\
&\lesssim \l|\frac{1}{c(1-\wh{p}_{n})}-\frac{1}{c(1-p_0)}\r|+\frac{1}{c(1-p_0)}|\wh{D}_{n}-D_0| \toP 0. 
\end{align*}
The proofs for (b)--(d) are similar by using the fact \eqref{vp-bdd}. The case of $D=0$ is also similar.  
\end{proof}

\subsection{Proof of Lemma \ref{lem:fs11}} \label{proof:fs11}

Note the explicit expression in Feng and Shimizu \cite{fs13}, Proposition 4.1: 
\[
W^{(q)}(x) = \l\{
\begin{aligned}
&\frac{1}{D(1-p)(\b+ \g)}\int_{[0,x)} [e^{\g(x-z)} - e^{-\b(x-z)}]\,G_q(\df z), & (D>0) \\
&\frac{1}{c(1-p)}\int_{[0,x)} e^{\g(x-z)}\,G_q(\df z),& (D=0)
\end{aligned}\r., 
\]
By the integration-by-parts of this expression by noticing the jump at the origin, we have the consequence.

\subsection{Proof of Lemma \ref{lem:tail}} \label{proof:tail}

We will show only the case where $D>0$. The same calculation applies for $D=0$. 
According to Lemmas \ref{lem:SL} and \ref{lem:sz19}, we should show that $f:=\ol{G}_q$ satisfies \eqref{SL-cond}. 

{\red Let  $\e,M>0$ be any constants with $\e < M$.  
We firstly show that the infinite sum $\ol{G}_q(x)= \sum_{k=0}^\infty (1-p)p^k \int_x^\infty f_q^{*k}(z)\,\df z$ can be differentiable under the summation sign on $(\e,M)$. 

We shall put the partial sum $g_N(x):= \sum_{k=0}^N (1-p)p^k \int_x^\infty f_q^{*k}(z)\,\df z$ for an integer $N \in \N$, and note that $g_N(x)\to \ol{G}_q(x)\ (N\to \infty)$ for all $x>0$.  
Next, note the differential formula for the convolution: for functions $f,g \in C^1(\R_+)$, 
\[
\frac{\df }{\df x} (f*g)(x) = (f * \p_x g)(x) + f(x) g(0),  
\] 
In particular, if $f(0)=0$, then it follows for $k\in \N$ that 
\begin{align}
\p_x f^{*k}= f*\p_x f^{*(k-1)} = \dots = f^{*(k-1)}*\p_x f, \label{differentiate}
\end{align}
by induction. Using this formula, we have for $m=1,2,\dots$ that 
\begin{align*}
 \p^m_x g_N(x) =p(1-p^{m-2})\p_x^{m-1}f_q(x) - p^{m-1}\sum_{k=0}^{N-m+1} (1-p)p^k f_q^{*k}* \p^{m-1}_xf_q(x), 
\end{align*}
where we regard $\p_x^0 f_q \equiv 0$ as a convention. 
Now, $f_q(x)$ is bounded by Lemma \ref{lem:fq}, so $f_q^{*k}\ (k=1,2,\dots)$ is also bounded. 
Moreover, it also follows that $\p^m_xf_q$ is bounded on $(\e,M)$. Indeed, 
\begin{align*}
\p_x f_q(x) &= \frac{1}{pD}\l[-\b f_q(x)  + \ol{\nu}(x) + \int_x^\infty \ol{\nu}(z)\,\df z\r],  
\end{align*}
and we see by induction that, for any $m\ge 2$,  
\begin{align*}
\l|\p_x^{m-1}  f_q(x)\r| \lesssim  1+ \sum_{k=1}^{m-1} \l|\p_x^{k-1} \ol{\nu}(x)\r| + \l|\int_x^\infty \ol{\nu}(z)\,\df z\r|. 
\end{align*}
and it follows for any $\e>0$ that 
\begin{align}
\sup_{x \in (\e,M)}|\p_x^{m-1} f_q(x)| &\lesssim  1 + \sum_{k=1}^{m-1} \sup_{x \in (\e,M)} \l|\p_x^{k-1} \ol{\nu}(x)\r| + \int_M^\infty \ol{\nu}(z)\,\df z 
\lesssim  1 + M^C, \label{f-diff}
\end{align}
by the assumption \eqref{nu-bar}. 
Therefore we can confirm the sequnece $\{\p^m_x g_N(x)\}_{N\in \N}$ is a uniform Cauchy sequence on $(\e,\infty)$: 
\[
\lim_{N,N'\to \infty} \sup_{x\in (\e,M)}|\p^m_x g_N(x) - \p^m_x g_{N'}(x)| = 0, 
\]
Hence ``the term differential theorem" says that, for any $\e, M>0$, 
\[
\lim_{N\to \infty} \p^m_x g_N(x)= \p_x^m \ol{G}_q(x),\quad x \in (\e, M). 
\]
Therefore, it follows for any $x >0$ that 
\begin{align}
\p_x^m \ol{G}_q (x) &= p(1-p^{m-2})\p_x^{m-1}f_q(x) - p^{m-1} \l(\sum_{k=0}^\infty (1-p)p^k f_q^{*k} \r)*\p_x^{m-1}f_q  \notag\\
&=p(1-p^{m-2})\p_x^{m-1}f_q(x)  -p^{m-1}\,\E\l[\p_x^{m-1}f_q(x - Z)\r],   \label{Z-Gp}
\end{align}
where $Z$ is a random variable with the distribution $G_q$ whose probability density is given by $\sum_{k=0}^\infty (1-p)p^k f_q^{*k}(x)$. 

Now, as for the second term in the last right-hand side of \eqref{Z-Gp}, we see that 
\[
\E\l[\p_x^{m-1}f_q(x - Z)\r] = O\l(1 + x^\k\r),\quad x\to \infty
\]
Indeed, it follows from \eqref{f-diff} that 
\[
\sup_{x > \e} \frac{\l|\p_x^{m-1}f_q(x - Z)\r|}{1 + x^C} \lesssim \sup_{x > \e} \frac{1 + x^C + Z^C}{1 + x^C} \le 1 + |Z|^C, 
\]
and the last term is integrable by the assumption. Then, the Lebesgue convergence theorem and the equality \eqref{Z-Gp} yield that }
\begin{align*}
\p_x^m \ol{G}_q (x) & = O\l(1 + x^\k\r),\quad x\to \infty, 
\end{align*}
which implies that $x^{m/2}\p_x^m \ol{G}_q (x)  \in L^2_\a(\R_+)$ for any $\a>0$ and $m\ge 2$.  
Similarly, as \eqref{nu-bar2}, it holds that 
\[
\l|x^{m/2} \p_x^m \ol{G}_q (x)\r|^2 e^{-\a x}= O\l(x^m e^{-(\a -2k) x}\r),\quad x\to \infty, 
\]
and therefore $x^{m/2}\p_x^m \ol{G}_q (x)  \in L^2_\a(\R_+)$ for any $\a>2\k$. 
This completes the proof. 

\subsection{Proof of Theorem \ref{thm:W}} \label{proof:W}

As for the consistency, thanks to Corollary \ref{cor:sz-G} and Lemma \ref{lem:PQ_C1}, 
the continuous mapping theorem yields that 
\begin{align*}
\wh{W}^{(q)}_{K}(x)\toP W^{(q)}_{K}(x)
\end{align*}
for each $x\in\R_+$ and $q\ge 0$. In particular, as $q=0$, noticing that $\wh{\g}_{n}=\g_0=0$, we see from Lemma \ref{lem:PQ_consis} that 
\begin{align*}
\sup_{x\in\R_+}\l|\wh{W}^{(0)}_{K}(x)-W^{(0)}_{K}(x)\r|
&\leq\sup_{x\in\R_+}\l|P(x;\wh{p}_{n},0,\wh{D}_{n})-P(x;p_0,0,D_0)\r| \\
&\qquad +\sum_{k=0}^{K}\wh{a}^{G}_{\a,K}\sup_{x\in\R_+}\l|Q_{\a,k}(x;\wh{p}_{n},0,\wh{D}_{n})-Q_{\a,k}(x;p_0,0,D_0)\r|\\
&\qquad +\sum_{k=0}^{K}\l|\wh{a}^{G}_{\a,k}-a^{G}_{\a,k}\r|\sup_{x\in\R_+}\l|Q_{\a,k}(x;p_0,0,D_0)\r| 
\toP 0. 
\end{align*}

As for the asymptotic normality, we only show the case where $D>0$ since the proof for $D=0$ is similarly done. 

Note that
\begin{align*}
\sqrt{T_{n}}\l(\wh{W}^{(q)}_{K}(x)-W^{(q)}_{K}(x)\r)
&=\sqrt{T_{n}}\l(P(x;\wh{p}_{n},\wh{\g}_{n},\wh{D}_{n})-P(x;p_0,\g_0,D_0)\r) \\
&\qquad -\sum_{k=0}^{K}\wh{a}^{G}_{\a,k}\sqrt{T_{n}}\l(Q_{\a,k}(x;\wh{p}_{n},\wh{\g}_{n},\wh{D}_{n})-Q_{\a,k}(x;p_0,\g_0,D_0)\r)\\
&\qquad -\sum_{k=0}^{K}Q_{\a,k}(x;p_0,\g_0,D_0)\sqrt{T_{n}}\l(\wh{a}^{G}_{\a,k}-a^{G}_{\a,K}\r)\\
&=:U_1+U_2+U_3.
\end{align*}

On $U_1$, applying the mean value theorem, there exists some $p^{*}_{n}$, $\g^{*}_{n}$, $D^{*}_{n}$ such that 
\begin{align*}
U_1&=\sum_{k=0}^{K}\p_{(p,\g)}P(x;p^{*}_{n},\g^{*}_{n},D^{*}_{n}) \sqrt{T_{n}}
    \begin{pmatrix}
    \wh{p}_{n}-p_0\\
    \wh{\g}_{n}-\g_0
    \end{pmatrix}  \\
&\qquad +\sum_{k=0}^{K}\p_{D}P(x;p^{*}_{n},\g^{*}_{n},D^{*}_{n})\sqrt{T_{n}}\l(\wh{D}_{n}-D_0\r)\\
&= \sum_{k=0}^{K}\p_{(p,\g)}P(x;p_0,\g_0,D_0) \sqrt{T_{n}}
    \begin{pmatrix}
    \wh{p}_{n}-p_0\\
    \wh{\g}_{n}-\g_0
    \end{pmatrix} 
+ o_p(1), 
\end{align*}
by Lemma \ref{lem:D-est} and the continuous mapping theorem. 
By a similar argument as above, we have 
\begin{align*}
U_{2}=-\sum_{k=0}^{K}\wh{a}^{G}_{\a,k}\p_{(p,\g)}Q_{\a,k}(x;p_0,\g_0,D_0)\sqrt{T_{n}}
    \begin{pmatrix}
    \wh{p}_{n}-p_0\\
    \wh{\g}_{n}-\g_0
    \end{pmatrix}
    +o_{p}(1). 
\end{align*}
Moreover, on $U_3$, we obtain by \eqref{eq:aG_an} in Corollary \ref{cor:sz-G} that 
\begin{align*}
U_3=\mb{Q}_{\a,K}(x;p_0,\g_0,D_0)^\top A_{K}^{-1}\wh{B}_{K}\sqrt{T_{n}}
    \begin{pmatrix}
    \wh{\mb{a}}^{f}_{\a,K}-\mb{a}^{f}_{\a,K}\\
    \wh{\mb{a}}^{F}_{\a,K}-\mb{a}^{F}_{\a,K}
    \end{pmatrix}. 
\end{align*}
As a consequence, we have 
\begin{align}
U_1+U_2+U_3=C_K(x)\sqrt{T_{n}}\l(
    \begin{matrix}
    \wh{\mb{a}}^{f}_{\a,K}-\mb{a}^{f}_{\a,K}\\
    \wh{\mb{a}}^{F}_{\a,K}-\mb{a}^{F}_{\a,K}\\
    \wh{p}_{n}-p_0\\
    \wh{\g}_{n}-\g_0
    \end{matrix}
\r)+o_{p}(1),   \label{eq:WqK_an}
\end{align}
and Theorem \ref{thm:asymNormal} yields the consequence. 

\subsection{Proof of Theorem \ref{thm:Z}} \label{proof:Z}

By the same argument as in the proof of Theorem \ref{thm:W} in Section \ref{proof:W}, we have that 
\begin{align}
\sqrt{T_{n}}\l(\wh{Z}^{(q)}_{K}(x)-Z^{(q)}_{K}(x)\r)= qC^*_K(x)\sqrt{T_{n}}\l(
    \begin{matrix}
    \wh{\mb{a}}^{f}_{\a,K}-\mb{a}^{f}_{\a,K}\\
    \wh{\mb{a}}^{F}_{\a,K}-\mb{a}^{F}_{\a,K}\\
    \wh{p}_{n}-p_0\\
    \wh{\g}_{n}-\g_0
    \end{matrix}
\r)+o_{p}(1),   \label{eq:ZqK_an}
\end{align}
and Theorem \ref{thm:asymNormal} yields the consequence. 

\subsection{Proof of Theorem \ref{cor:WqK_ZqK_an}} \label{proof:WqK_ZqK_an}
Noticing the expressions \eqref{eq:WqK_an} and  \eqref{eq:ZqK_an}, 
we have 
\begin{align*}
\sqrt{T_{n}}\l(
    \begin{matrix}
    \wh{W}^{(q)}_{K}(x)-W^{(q)}_{K}(x)\\
    \wh{Z}^{(q)}_{K}(x)-Z^{(q)}_{K}(x)
    \end{matrix}
\r) = \sqrt{T_n} \begin{pmatrix}
    C_{K}(x)\\
    qC^{*}_{K}(x)
    \end{pmatrix} 
 \begin{pmatrix}
    \wh{\mb{a}}^{f}_{\a,K}-\mb{a}^{f}_{\a,K}\\
    \wh{\mb{a}}^{F}_{\a,K}-\mb{a}^{F}_{\a,K}\\
    \wh{p}_{n}-p_0\\
    \wh{\g}_{n}-\g_0
    \end{pmatrix}
    + o_p(1). 
\end{align*}
Then Theorem \ref{thm:asymNormal} yields the consequence. 


\end{document}